\documentclass[preprint,12pt]{elsarticle}

\usepackage{amsmath,amsfonts,amssymb,amsthm}
\usepackage{enumitem}
\usepackage{tikz}
\usetikzlibrary{angles,quotes,calc,arrows.meta,patterns,positioning,shapes.geometric}
\usepackage[hidelinks]{hyperref}
\usepackage{cleveref}

\newtheorem{theorem}{Theorem}[section]
\newtheorem{lemma}[theorem]{Lemma}
\newtheorem{proposition}[theorem]{Proposition}
\newtheorem{corollary}[theorem]{Corollary}
\theoremstyle{remark}
\newtheorem{remark}[theorem]{Remark}
\newtheorem{problem}[theorem]{Problem}

\begin{document}

\begin{frontmatter}

\title{Quantitative and Uniform $L^2$ Non-Localization on Integrable Polygons}

\author[hcmus,vnuhcm]{Binh T. Nguyen\corref{cor1}}
\ead{ngtbinh@hcmus.edu.vn}

\cortext[cor1]{Corresponding author.}
\affiliation[hcmus]{organization={Faculty of Mathematics and Computer Science, University of Science},
            city={Ho Chi Minh City},
            country={Vietnam}}
\affiliation[vnuhcm]{organization={Vietnam National University Ho Chi Minh City},
            city={Ho Chi Minh City},
            country={Vietnam}}

\begin{abstract}
We study uniform lower bounds for the local $L^2$ mass of
eigenfunctions on planar integrable polygons, with particular emphasis
on spectral degeneracy and quantitative dependence on the observation
set. Given a bounded domain $\Omega\subset\mathbb{R}^2$ and a
measurable set $V\subset\Omega$ of positive measure, we consider
\[
    C_2(V;\Omega)
    :=
    \inf_{\lambda\in\sigma(-\Delta_\Omega)}
    \inf_{0\neq u\in E_\lambda(\Omega)}
    \frac{\|u\|_{L^2(V)}}{\|u\|_{L^2(\Omega)}}.
\]
The second infimum ranges over the complete eigenspace and therefore
allows arbitrary cancellations among modes associated with a multiple
eigenvalue.

We prove that $C_2(V;\Omega)>0$ for the four integrable polygonal
classes in the trigonometric spectral classification: rectangles,
isosceles right triangles, equilateral triangles, and
hemi-equilateral triangles. Thus every fixed positive-measure
observation set captures a uniformly positive fraction of the
$L^2$ mass throughout the spectrum, independently of spectral
multiplicity.
For rectangles, we obtain a quantitative refinement. If the reflected
extension of $V$ to the associated rectangular torus has finite
perimeter and
\[
    \alpha=\frac{|V|}{|\Omega|},
\]
we derive an explicit sufficient spectral threshold
$\lambda_*(\Omega,V)$ such that every eigenfunction with
$\lambda\geq\lambda_*(\Omega,V)$ satisfies
\[
    \frac{\|u\|_{L^2(V)}}{\|u\|_{L^2(\Omega)}}
    \geq
    \left[
        \frac{\alpha}{2}
        \left(
            1-\frac{\sin(\pi\alpha)}{\pi\alpha}
        \right)
    \right]^{1/2}.
\]
Finally, we establish two robustness properties. The stationary
non-localization estimate extends to Dirichlet Schr\"odinger operators
with bounded real-valued potentials on the rectangular branch.
Moreover, an explicit spectral-defect estimate extends observability
from exact eigenfunctions to sufficiently accurate quasimodes and to
arbitrary vectors in sufficiently narrow spectral clusters.
\end{abstract}

\begin{keyword}
Laplacian eigenfunctions \sep uniform non-localization \sep
Schr\"odinger observability \sep integrable polygons \sep
spectral multiplicity \sep quasimodes \sep spectral clusters
\MSC[2020] 35P20 \sep 35J05 \sep 35Q41 \sep 93B07
\end{keyword}

\end{frontmatter}

\section{Introduction}
\label{sec:introduction}

Let $\Omega\subset\mathbb{R}^2$ be a bounded planar domain, and let
$-\Delta_\Omega$ denote the Dirichlet Laplacian on $L^2(\Omega)$.
We consider the following eigenvalue problem
\begin{equation}
    -\Delta_\Omega u=\lambda u,
    \qquad
    u\in D(-\Delta_\Omega)\setminus\{0\}.
    \label{eq:intro-eigenproblem}
\end{equation}
Here and throughout, $-\Delta_\Omega$ is understood as the
nonnegative self-adjoint realization of $-\Delta$ with Dirichlet
boundary conditions; its precise variational definition is recalled
below.

The spatial distribution of Laplacian eigenfunctions is a central
problem in spectral theory and mathematical physics and depends
sensitively on the geometry of the underlying domain. Of particular
interest is the localization phenomenon of eigenfunctions, in which a significant
fraction of the $L^2$ mass can concentrate in a restricted portion
of the domain. Such behavior may arise from trapping, geometric
bottlenecks, boundary effects, or concentration near distinguished
classical trajectories; see, Grebenkov et al. \cite{GrebenkovNguyen2013Review} and the references therein.

For a measurable set $V\subset\Omega$ with $|V|>0$, we consider the
uniform $L^2$ norm ratio
\begin{equation}
    C_2(V;\Omega)
    :=
    \inf_{\lambda\in\sigma(-\Delta_\Omega)}
    \inf_{\substack{u\in E_\lambda(\Omega)\setminus\{0\}}}
    \frac{\|u\|_{L^2(V)}}
         {\|u\|_{L^2(\Omega)}},
    \qquad
    E_\lambda(\Omega)
    :=
    \ker(-\Delta_\Omega-\lambda I).
    \label{eq:intro-C2}
\end{equation}
The condition
\[
    C_2(V;\Omega)>0
\]
means that every Dirichlet eigenfunction has a uniformly positive
$L^2$ norm on $V$ relative to its global $L^2$ norm, with a lower
bound independent of the eigenvalue and of the particular vector in
the corresponding eigenspace. Consequently, no sequence of
$L^2$-normalized eigenfunctions can asymptotically avoid a fixed
measurable subset of positive measure.

For separated eigenfunctions on rectangular domains, related lower
bounds can be obtained directly from explicit product formulas.
Nguyen et al. ~\cite{GrebenkovNguyen2013} considered the more
general quantity
\begin{equation}
    C_p(V)
    :=
    \inf_j
    \frac{\|u_j\|_{L^p(V)}}
         {\|u_j\|_{L^p(\Omega)}},
\end{equation}
and established its positivity for every nonempty open subset in
rectangle-like domains with simple spectrum.
They also pointed out that spectral degeneracies introduce
an additional difficulty: an eigenfunction associated with a multiple
eigenvalue may be an arbitrary linear combination of basis elements
in the corresponding eigenspace, and estimates for individual
separated modes do not automatically yield estimates uniform over
all such combinations. The positivity problem was therefore left
unresolved for general rectangle-like domains in the presence of
spectral degeneracies. This difficulty already occurs for the square.

From the viewpoint of the localization of Laplacian eigenfunctions, this leads naturally
to the following uniform non-localization problem: under Dirichlet
boundary conditions, does every measurable set
$V\subset\Omega$ of positive measure capture a uniformly positive
fraction of the $L^2$ norm of every Laplacian eigenfunction, uniformly
over the spectrum and over all vectors in each eigenspace? In the
notation above, the question is whether
\[
    C_2(V;\Omega)>0
\]
holds without any simplicity assumption on the Dirichlet spectrum.
For integrable polygonal domains, resolving this question amounts to
passing from mode-by-mode information to an eigenspace-uniform
statement that remains valid in the presence of spectral
multiplicities.

Our first aim is to formulate this result within the $L^2$
non-localization framework, with uniformity over each eigenspace.
The Schr\"odinger observability estimates used below are known.
If $u$ belongs to an eigenspace of the Dirichlet Laplacian, then its
Schr\"odinger evolution differs from $u$ only by a scalar phase.
Applying the observability inequality to such a $u$ therefore gives
an $L^2$ lower bound on the observation set that holds for every
vector in the eigenspace, including arbitrary linear combinations
when the eigenvalue has nontrivial multiplicity.

More precisely, suppose that for some observation time $\tau>0$,
\begin{equation}
    \|f\|_{L^2(\Omega)}^2
    \leq
    C_{V,\tau}
    \int_0^\tau\int_V
    \left|e^{it\Delta_\Omega}f(x)\right|^2
    \,dx\,dt
    \label{eq:intro-observability}
\end{equation}
holds for every $f\in L^2(\Omega)$. If
$u\in E_\lambda(\Omega)$, then
\begin{equation}
    e^{it\Delta_\Omega}u
    =
    e^{-it\lambda}u,
    \label{eq:intro-eigen-phase}
\end{equation}
and therefore,
\begin{equation}
    \|u\|_{L^2(V)}
    \geq
    (\tau C_{V,\tau})^{-1/2}
    \|u\|_{L^2(\Omega)}.
    \label{eq:intro-obs-to-C2}
\end{equation}
The observability constant is independent of both the initial data
and the spectral parameter. Consequently, after restriction to an
eigenspace, the resulting lower bound is uniform in the eigenvalue
and over all nonzero vectors in that eigenspace.

We use this principle with the observability results available for
integrable polygons. For rectangles, odd reflection followed by
periodic extension transfers the Dirichlet problem to a rectangular
flat torus. The required Schr\"odinger observability estimate then
follows from Burq and Zworski
\cite[Theorem~1.3 and Remark~1]{BurqZworski2019}.
For equilateral triangles, we use the observability theorem of
Alphonse and Lafontaine
\cite[Theorem~1.2]{AlphonseLafontaine2025}, based on the triangular
tiling and a reduction to a rational twisted torus. The other two
integrable polygonal classes follow by reflection: an isosceles right
triangle reflects across its hypotenuse to a square, while a
hemi-equilateral triangle reflects across its altitude to an
equilateral triangle.

Combining these existing observability inputs with the symmetry
reductions described above yields a spectrum-wide non-localization
statement for the four integrable polygonal classes considered here.
More precisely, if $\Omega$ is a rectangle, an isosceles right
triangle, an equilateral triangle, or a hemi-equilateral triangle,
then for every measurable set $V\subset\Omega$ with $|V|>0$,
\begin{equation}
{
    C_2(V;\Omega)>0.
    }
    \label{eq:intro-main-result}
\end{equation}
Equivalently, there exists $c_{\Omega,V}>0$ such that
\begin{equation}
    \|u\|_{L^2(V)}
    \geq
    c_{\Omega,V}
    \|u\|_{L^2(\Omega)}
    \label{eq:intro-main-bound}
\end{equation}
for every nonzero Dirichlet Laplacian eigenfunction $u$, with
$c_{\Omega,V}$ independent of the eigenvalue, its multiplicity, and
the choice of $u$ within the corresponding eigenspace.

Beyond this eigenspace-uniform result for the free Dirichlet
Laplacian, we establish two robustness extensions. First, on
rectangles and isosceles right triangles, we consider Dirichlet
Schr\"odinger operators
\[
    H_{\Omega,q}
    =
    -\Delta_\Omega+q,
    \qquad
    q\in L^\infty(\Omega;\mathbb{R}),
\]
and prove uniform non-localization on every nonempty open observation
set. The argument combines odd reflection of the eigenfunction, even
reflection of the potential, and the stationary estimate of Bourgain,
Burq, and Zworski
\cite[Theorem~1]{BourgainBurqZworski2013}
on rectangular flat tori.

Second, we establish a quantitative stability estimate with respect
to the spectral defect. For a self-adjoint operator $A$ satisfying a
Schr\"odinger observability estimate, we prove
\begin{equation}
    \|u\|_{L^2(V)}
    \geq
    \frac{1}{\sqrt{\tau C_{V,\tau}}}
    \|u\|_{L^2(\Omega)}
    -
    \frac{\tau}{\sqrt{3}}
    \|(A-\lambda)u\|_{L^2(\Omega)}.
    \label{eq:intro-quasimode-bound}
\end{equation}
Exact eigenfunctions correspond to zero spectral defect. More
generally, the inequality~\eqref{eq:intro-quasimode-bound} yields uniform
non-localization for sufficiently accurate quasimodes and for
arbitrary vectors in sufficiently narrow spectral clusters. Thus, the
eigenspace-uniform result extends from linear combinations within a
single eigenvalue to linear combinations across sufficiently narrow
spectral windows.

The main results and contributions of the paper can be summarized as follows:
\begin{enumerate}[label=(\roman*)]

\item
\emph{Quantitative non-localization on rectangles.}
For a rectangle $R=(0,a)\times(0,b)$ and a measurable observation set
$V\subset R$ whose reflected extension has finite perimeter, we derive
a quantitative lower bound for the local $L^2$ mass over complete
Dirichlet eigenspaces.  We obtain an explicit sufficient threshold
$\lambda_*(R,V)$ determined by the reflected perimeter and the area
fraction
\[
    \alpha=\frac{|V|}{|R|}.
\]
Above this threshold, the geometric dependence disappears from the
mass floor:
\[
    \frac{\|u\|_{L^2(V)}}{\|u\|_{L^2(R)}}
    \geq
    \left[
        \frac{\alpha}{2}
        \left(
            1-
            \frac{\sin(\pi\alpha)}{\pi\alpha}
        \right)
    \right]^{1/2}.
\]
A finite-bandwidth Tur\'an--Nazarov argument controls the complementary
low-frequency regime and yields the explicit full-spectrum estimate
$C_2(V;R)\geq F_R(V)>0$.

\item
\emph{Eigenspace-level non-localization in the presence of spectral
degeneracy.}
For the complete class of integrable polygons in the trigonometric
spectral classification---rectangles, isosceles right triangles,
equilateral triangles, and hemi-equilateral triangles---we prove that
for every measurable set $V\subset\Omega$ with $|V|>0$,
\[
    C_2(V;\Omega)
    :=
    \inf_{\lambda\in\sigma(-\Delta_\Omega)}
    \inf_{\substack{u\in E_\lambda(\Omega)\setminus\{0\}}}
    \frac{\|u\|_{L^2(V)}}{\|u\|_{L^2(\Omega)}}
    >0.
\]
The estimate is uniform over the full spectrum and, crucially, over
the complete eigenspace associated with each eigenvalue. Hence no
simplicity or uniform multiplicity assumption is required, and
arbitrary cancellations inside degenerate eigenspaces are controlled.
This resolves the $L^2$ non-localization problem for the integrable
polygonal subclass at the eigenspace level.

\item
\emph{Robustness under bounded potentials on the rectangular branch.}
For rectangles and isosceles right triangles, we extend the
eigenspace-uniform non-localization estimate to Dirichlet
Schr\"odinger operators
\[
    -\Delta_\Omega+q,
    \qquad
    q\in L^\infty(\Omega;\mathbb R).
\]
The argument couples odd reflection of the eigenfunction with even
reflection of the potential and reduces the problem to a stationary
estimate on a rectangular flat torus. The resulting lower bound is
uniform in the eigenvalue, its multiplicity, and the choice of vector
inside the corresponding eigenspace.

\item
\emph{Quantitative stability beyond exact eigenspaces.}
For a self-adjoint operator $A$ whose Schr\"odinger flow satisfies an
observability inequality, we derive the explicit defect estimate
\[
    \|u\|_{L^2(V)}
    \geq
    \frac{1}{\sqrt{\tau C_{V,\tau}}}
    \|u\|_{L^2(\Omega)}
    -
    \frac{\tau}{\sqrt{3}}
    \|(A-\lambda)u\|_{L^2(\Omega)}.
\]
This gives a quantitative stability principle with respect to spectral
defect: exact eigenfunctions correspond to zero defect, while
sufficiently accurate quasimodes inherit a positive observation bound.
Through the spectral theorem, the same estimate yields uniform
non-localization for arbitrary vectors in sufficiently narrow spectral
clusters, thereby extending the result from degenerate eigenspaces to
superpositions of nearby eigenvalues.

\end{enumerate}

\paragraph{Notation and conventions.}
Throughout the paper, $\Omega\subset\mathbb{R}^2$ denotes a bounded
polygonal domain. We write $-\Delta_\Omega$ for the nonnegative
self-adjoint Dirichlet Laplacian on $L^2(\Omega)$ associated with the
closed quadratic form
\[
    q_\Omega(u,v)
    :=
    \int_\Omega
    \nabla u\cdot\nabla\overline{v}\,dx,
    \qquad
    D(q_\Omega)=H_0^1(\Omega).
\]
Thus, $u\in D(-\Delta_\Omega)$ and
$-\Delta_\Omega u=f$ if and only if
\[
    \int_\Omega
    \nabla u\cdot\nabla\overline{\varphi}\,dx
    =
    \int_\Omega
    f\,\overline{\varphi}\,dx
    \qquad
    \text{for every }\varphi\in H_0^1(\Omega).
\]
For $\lambda\in\sigma(-\Delta_\Omega)$, we set
\[
    E_\lambda(\Omega)
    :=
    \ker(-\Delta_\Omega-\lambda I)
\]
for the corresponding eigenspace. 
The Schr\"odinger propagator is written as
\[
    e^{it\Delta_\Omega}
    =
    e^{-it(-\Delta_\Omega)}.
\]
Unless otherwise stated, $V\subset\Omega$ is measurable with
$|V|>0$. Observation times are denoted by $\tau>0$, and
$C_{V,\tau}$ denotes a corresponding Schr\"odinger observability
constant. Constants denoted by $c_{\Omega,V}$ are allowed to depend
on the fixed domain $\Omega$ and the fixed observation set $V$, but
not on the eigenvalue or on the choice of eigenfunction within an
eigenspace. Finally, $\widetilde u$ denotes a reflected or an extended
eigenfunction.

The remainder of the paper is organized as follows.
Section~\ref{sec:related-work} reviews relevant results on
eigenfunction localization, integrable and rational polygons, and
Schr\"odinger observability.
Section~\ref{sec:quantitative-rectangles} establishes quantitative
full-spectrum non-localization on rectangles, including an explicit
high-frequency threshold and an area-only asymptotic mass floor.
Section~\ref{sec:uniform-L2-nonlocalization} develops the
observability-to-non-localization implication and treats rectangles
and equilateral triangles.
Section~\ref{sec:integrable-polygons} completes the argument for the
four integrable polygonal classes by symmetry reduction and discusses
the associated geometric structure.
Section~\ref{sec:bounded-potentials} extends uniform
non-localization to Dirichlet Schr\"odinger operators with bounded
real-valued potentials on rectangles and isosceles right triangles.
Section~\ref{sec:quasimodes} derives an explicit defect estimate
from observability and applies it to quasimodes and narrow spectral
clusters.
Section~\ref{sec:scope-limitations} discusses the scope and
limitations of the method and possible extensions, and
Section~\ref{sec:conclusion} summarizes the main results.

\section{Related Work}
\label{sec:related-work}

\subsection{Localization of Laplacian Eigenfunctions}
\label{subsec:related-localization}

The localization and spatial structure of Laplacian eigenfunctions
depend strongly on the geometry of the underlying domain; see, for
example, the survey of Grebenkov and Nguyen
\cite{GrebenkovNguyen2013Review}. In their earlier study of
localization in circular, spherical, elliptical, and rectangle-like
domains, Nguyen and Grebenkov \cite{GrebenkovNguyen2013} identified
several mechanisms of high-frequency localization and proved
non-localization results for most rectangle-like domains. They also
formulated the problem of localization in polygonal domains and, more
generally, in piecewise smooth convex domains. 
The present work
addresses the $L^2$ non-localization question for the complete integrable subclass in the trigonometric spectral
classification considered here.

The same non-localization criterion has also been studied for
one-dimensional Schr\"odinger operators. Liard, Lissy, and Privat
\cite{LiardLissyPrivat2018} considered Sturm--Liouville operators with
Dirichlet boundary conditions and investigated uniform lower bounds
for the $L^2$ mass of eigenfunctions on measurable subsets. Their
results include classes of bounded potentials and provide a
one-dimensional counterpart to the non-localization questions
considered here. In contrast, the present work concerns
two-dimensional polygonal domains, where spectral multiplicity and
the geometry of the domain require a different mechanism.

More recently, van den Berg and Bucur
\cite{VanDenBergBucur2025} studied localization of Laplacian
eigenfunctions from a different geometric perspective. Among other
results, they constructed sequences of simply connected planar
polygonal domains for which the corresponding first Dirichlet or
Neumann eigenfunctions exhibit $L^2$ localization. Their setting is
distinct from the fixed-domain high-frequency problem considered
here: localization occurs along a sequence of varying domains, whereas
the present work fixes an integrable polygon and seeks a uniform lower
mass bound throughout its Dirichlet spectrum.

A detailed formulation is as follows. For a
measurable subset $V\subset\Omega$ and $1\leq p<\infty$, one considers
the lower mass ratio
\begin{equation}
    C_p(V;\Omega)
    :=
    \inf_{\lambda\in\sigma(-\Delta_\Omega)}
    \;
    \inf_{\substack{
        u\in E_\lambda(\Omega)\setminus\{0\}
    }}
    \frac{\|u\|_{L^p(V)}}
         {\|u\|_{L^p(\Omega)}}.
    \label{eq:related-Cp}
\end{equation}
Positivity of $C_p(V;\Omega)$ rules out the possibility that
eigenfunctions, including arbitrary choices inside multiple
eigenspaces, asymptotically avoid the set $V$. For domains with simple
or explicitly separated spectra, lower bounds of this type may be
obtained mode by mode. Spectral multiplicity creates an additional
difficulty, however, because estimates for a selected eigenbasis do
not automatically control arbitrary linear combinations within a
degenerate eigenspace.

In this paper, we address this question for $p=2$ on the complete
class of integrable polygons in the trigonometric spectral
classification of McCartin. For every rectangle, isosceles right
triangle, equilateral triangle, and hemi-equilateral triangle, and for
every measurable set $V\subset\Omega$ with $|V|>0$,
\[
    C_2(V;\Omega)>0.
\]
Thus, the localization problem on integrable polygons is treated at
the level of complete eigenspaces rather than through mode-by-mode
estimates.
In this sense, the result gives a complete $L^2$ non-localization
answer for the integrable subclass of the polygonal problem considered
in \cite{GrebenkovNguyen2013}.

The mechanism used here comes from Schr\"odinger observability.
Full-space observability estimates are uniform over all initial data
in $L^2(\Omega)$. When restricted to a Laplacian eigenspace, the
Schr\"odinger evolution acts only by a scalar phase, so the resulting
stationary lower bound is automatically uniform over the entire
eigenspace. This observation is particularly useful in the presence
of spectral multiplicity. The required dynamical estimates are
provided by the observability results of Burq and Zworski in the
rectangular-torus setting and of Alphonse and Lafontaine in the
equilateral-triangle setting, together with the corresponding
reflection and symmetry reductions.

A related stationary perspective has recently been developed by Burq,
Germain, Sorella, and Zhu
\cite{BurqGermainSorellaZhu2026}, studying trace and observability
inequalities for Laplace eigenfunctions on flat tori with respect to
general Borel measures. Their estimates have the form
\[
    \int_{\mathbb{T}^d}|u|^2\,d\mu
    \geq
    c_\mu
    \int_{\mathbb{T}^d}|u|^2\,dx
\]
uniformly over Laplace eigenfunctions. This places stationary
eigenfunction observability in a broader harmonic-analytic framework.
The present work concerns the corresponding non-avoidance problem for
Dirichlet eigenfunctions on integrable polygonal domains and formulates
the conclusion directly through the quantity $C_2(V;\Omega)$.

Quantitative unique continuation and spectral inequalities provide
another closely related line of work. Davey \cite{Davey2020} obtained
quantitative unique continuation estimates for Schr\"odinger operators
with singular lower-order terms. Dicke et al.
\cite{DickeRoseSeelmannTautenhahn2023} established corresponding
estimates for spectral subspaces of Schr\"odinger operators with
singular potentials, while Dicke and Veseli\'c
\cite{DickeVeselic2023} obtained scale-free estimates for gradients of
eigenfunctions of divergence-form operators. More recently, Dicke,
Seelmann, and Veseli\'c \cite{DickeSeelmannVeselic2024} proved spectral
inequalities for Schr\"odinger operators with power-growth confinement
potentials and observation sets of nonuniform density. Such results
typically quantify the dependence of the observation bound on the
spectral scale, whereas the exact-eigenspace estimate considered here
is uniform over the spectrum.

Localization phenomena also occur in singular and thin geometries.
For example, G\'omez, Nazarov, and P\'erez-Mart\'inez
\cite{GomezNazarovPerezMartinez2021} established localization effects
for Dirichlet spectral problems in domains surrounded by thin stiff
and heavy bands, while Cardone, Nazarov, and Taskinen
\cite{CardoneNazarovTaskinen2024} proved localization and exponential
decay of Dirichlet eigenfunctions in a thin-walled beaker. Related
questions concerning concentration and growth of Laplacian
eigenfunctions on compact manifolds have been studied by
Steinerberger \cite{Steinerberger2023}. These mechanisms are
geometrically different from the uniform non-avoidance problem
considered here.

Finally, spectral-subspace estimates provide the natural comparison
for the quasimode results developed later in the paper. The connection
between observability and estimates involving the spectral defect
\[
    \|(A-\lambda)u\|_{L^2}
\]
is classical and is closely related to resolvent and Hautus-type
conditions. In Section~\ref{sec:quasimodes}, we use the time-dependent
observability estimate directly to obtain an explicit defect bound.
Through the spectral theorem, this yields uniform non-localization for
sufficiently accurate quasimodes and for arbitrary vectors in
sufficiently narrow spectral windows. The admissible window width is
determined by the observation time and observability constant, and no
optimality of the resulting threshold is claimed.

\subsection{Rational Polygons and High-Frequency Eigenfunctions}
\label{subsec:rational-polygons}

Rational polygons, whose interior angles are rational multiples of
$\pi$, form a natural class in polygonal spectral theory because their
billiard flows admit finite unfoldings to compact translation
surfaces. A fundamental result of Marklof and Rudnick
\cite{MarklofRudnick2012} shows that, for any rational polygon, a
density-one subsequence of Dirichlet eigenfunctions becomes uniformly
distributed in configuration space.

This result is closely related to, but distinct from, the uniform
non-localization property considered here. Density-one
equidistribution permits exceptional subsequences, whereas the
quantity 
\begin{equation*}
    C_2(V;\Omega)
    =
    \inf_{\lambda\in\sigma(-\Delta_\Omega)}
    \inf_{\substack{u\in E_\lambda(\Omega)\setminus\{0\}}}
    \frac{\|u\|_{L^2(V)}}
         {\|u\|_{L^2(\Omega)}}
    \label{eq:related-uniform}
\end{equation*}
requires a lower bound valid for every eigenspace and every
eigenfunction within it. Thus, the density-one result of
\cite{MarklofRudnick2012} does not by itself imply
$C_2(V;\Omega)>0$.

For the integrable polygons considered below, additional spectral
structure is available. Rectangles admit separated trigonometric
eigenfunctions, while the equilateral triangle has an explicit
trigonometric spectral representation; see, for example,
Pinsky \cite{Pinsky1985}. These structures are closely related to the
reflection and tiling constructions used in the observability
arguments.

\subsection{Schr\"odinger Observability on Flat Tori}
\label{subsec:torus-observability}

Schr\"odinger observability on two-dimensional flat tori has been
developed in several works. Bourgain, Burq, and Zworski
\cite{BourgainBurqZworski2013} established observability results on
rational and irrational two-dimensional tori, including equations
with rough potentials. For the present paper, the key input is the
rough observability result of Burq and Zworski
\cite[Theorem~1.3 and Remark~1]{BurqZworski2019}: on a rectangular
two-dimensional torus, every measurable subset of positive Lebesgue
measure is an observation set for the free Schr\"odinger evolution.

More precisely, for every $T>0$ and every nontrivial
$W\in L^\infty(\mathbb{T}^2)$, their observability estimate controls
the initial $L^2$ norm by
\begin{equation}
    \left\|
        W e^{it\Delta}f
    \right\|_{L^2((0,T)\times\mathbb{T}^2)}.
    \label{eq:related-BZ}
\end{equation}
Taking $W=\mathbf{1}_V$ yields observability from any measurable
positive-measure set $V$. This result is the analytic input used below
after odd reflection of a rectangular Dirichlet eigenfunction to the
associated rectangular flat torus.

The rough-potential theory on two-dimensional tori also provides the
input for the bounded-potential extension considered later in this
paper. Bourgain, Burq, and Zworski
\cite[Theorem~1]{BourgainBurqZworski2013}
proved a stationary estimate for Schr\"odinger operators
\[
    -\Delta+q,
    \qquad
    q\in L^2(\mathbb{T}^2;\mathbb{R}),
\]
on two-dimensional rectangular flat tori, with nonempty open
observation sets. Since the torus has finite measure, their result
applies in particular to real-valued bounded potentials
$q\in L^\infty(\mathbb{T}^2;\mathbb{R})$.
In
Section~\ref{sec:bounded-potentials}, we combine this stationary
estimate with even reflection of the potential and odd reflection of
Dirichlet eigenfunctions to obtain uniform non-localization on
rectangles and isosceles right triangles.

\subsection{Observability on Equilateral Triangles}
\label{subsec:triangle-observability}

Alphonse and Lafontaine
\cite[Theorem~1.2]{AlphonseLafontaine2025} established a corresponding
Schr\"odinger observability result for the equilateral triangle, under
both Dirichlet and Neumann boundary conditions.
Their theorem allows
nonnegative observation weights
\[
    a\in L^2(\mathcal{T})\setminus\{0\},
\]
and therefore applies in particular to characteristic functions of
measurable subsets of positive measure. Their proof exploits the
reflection and tiling structure of the equilateral triangle and a
reduction to a rational twisted-torus setting.

This result provides the second principal observability input used in
the present paper. Once applied to a Laplacian eigenfunction, the
Schr\"odinger evolution changes the eigenfunction only by a unimodular
phase, so the dynamical observability estimate immediately yields a
frequency-uniform lower bound for its spatial $L^2$ mass.

\section{Quantitative Non-Localization on Rectangles}
\label{sec:quantitative-rectangles}

The preceding non-localization question asks only whether the local
$L^2$ mass admits a positive lower bound that is uniform over the
spectrum.  On rectangles, the reflection structure permits a more
quantitative analysis.  In this section we show that, for
finite-perimeter observation sets, sufficiently high-frequency
eigenspaces admit an explicit lower mass bound depending only on the
relative area of the observation set.  The geometry of the
observation set enters only through an explicit frequency threshold.

The argument has three ingredients.  First, the relative area of an
observation set gives a sharp uniform gap for every nontrivial Fourier
coefficient of its normalized indicator measure.  Second, finite
perimeter gives quantitative Fourier decay.  Third, lattice points on
a sufficiently short portion of a high-frequency circle form clusters
of cardinality at most two.  Combining the resulting block
decomposition with a quantitative estimate for the inter-cluster
interaction yields the main high-frequency estimate.

\subsection{Rectangular torus and notation}
\label{subsec:quantitative-rectangle-setting}

Let
\[
    R=(0,a)\times(0,b),
    \qquad a,b>0,
\]
and let
\[
    \mathbb T_{a,b}^2
    :=
    \mathbb R^2/
    \bigl(2a\mathbb Z\times2b\mathbb Z\bigr)
\]
be the associated rectangular flat torus.  Its Euclidean area is
\[
    |\mathbb T_{a,b}^2|=4ab.
\]
The dual lattice is
\begin{equation}
    \Lambda_{a,b}^*
    =
    \frac{\pi}{a}\mathbb Z
    \times
    \frac{\pi}{b}\mathbb Z,
    \label{eq:quant-dual-lattice}
\end{equation}
whose covolume in frequency space is
\begin{equation}
    \kappa_{a,b}
    :=
    \operatorname{covol}(\Lambda_{a,b}^*)
    =
    \frac{\pi^2}{ab}.
    \label{eq:quant-dual-covolume}
\end{equation}
For $r>0$, define the lattice shell
\begin{equation}
    \Lambda_r
    :=
    \left\{
        \xi\in\Lambda_{a,b}^*:
        |\xi|=r
    \right\}.
    \label{eq:quant-lattice-shell}
\end{equation}
Every toral eigenfunction satisfying
\[
    -\Delta U=r^2U
\]
has a finite Fourier representation
\begin{equation}
    U(x)
    =
    \sum_{\xi\in\Lambda_r}
    c_\xi e^{i\xi\cdot x}.
    \label{eq:quant-toral-eigenfunction}
\end{equation}
Orthogonality gives
\begin{equation}
    \|U\|_{L^2(\mathbb T_{a,b}^2)}^2
    =
    4ab
    \sum_{\xi\in\Lambda_r}|c_\xi|^2.
    \label{eq:quant-parseval}
\end{equation}
Let $V\subset\mathbb T_{a,b}^2$ be measurable with
\[
    0<|V|<4ab,
\]
and set
\begin{equation}
    \alpha
    :=
    \frac{|V|}{4ab}
    \in(0,1).
    \label{eq:quant-alpha}
\end{equation}
We associate with $V$ the probability measure
\begin{equation}
    d\mu_V(x)
    :=
    \frac{\mathbf 1_V(x)}{|V|}\,dx.
    \label{eq:quant-muV}
\end{equation}

\subsection{A sharp Fourier gap determined by area}
\label{subsec:sharp-fourier-gap}

We first isolate an extremal fact that depends only on the relative
area $\alpha$.

\begin{lemma}[Sharp Fourier gap for indicator measures]
\label{lem:sharp-fourier-gap}
Let $V\subset\mathbb T_{a,b}^2$ be measurable and let
$\alpha$ be defined by \eqref{eq:quant-alpha}. Then, for every
nonzero dual-lattice frequency $\xi\in\Lambda_{a,b}^*$,
\begin{equation}
    |\widehat{\mu_V}(\xi)|
    \leq
    \frac{\sin(\pi\alpha)}{\pi\alpha}.
    \label{eq:sharp-fourier-gap}
\end{equation}
Consequently,
\begin{equation}
    \eta(\alpha)
    :=
    1-
    \frac{\sin(\pi\alpha)}{\pi\alpha}
    >0.
    \label{eq:eta-alpha}
\end{equation}
The bound \eqref{eq:sharp-fourier-gap} is sharp for each fixed
nontrivial character.
\end{lemma}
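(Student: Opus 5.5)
Fix a nonzero frequency $\xi\in\Lambda_{a,b}^*$ and reduce to a one-dimensional rearrangement problem on the circle. Define
\[
    \widehat{\mu_V}(\xi)=\frac{1}{|V|}\int_V e^{-i\xi\cdot x}\,dx .
\]
The character $\chi_\xi(x)=e^{-i\xi\cdot x}$ is a continuous homomorphism from $\mathbb T_{a,b}^2$ onto the unit circle $S^1$. Write $\xi=(\pi m/a,\pi n/b)$ with $(m,n)\neq(0,0)$. Then the phase $\theta(x)=\xi\cdot x \bmod 2\pi$ pushes normalized Haar measure on the torus forward to normalized arc-length measure on $S^1$. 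To see this, take Fourier coefficients: for $k\in\mathbb Z\setminus\{0\}$, $\int e^{-ik\xi\cdot x}\,dx/(4ab)=0$ because $k\xi\neq0$ is a nonzero dual frequency. Let $\nu$ be the pushforward of $\mathbf 1_V\,dx/(4ab)$ under $\theta$. Then $0\le\nu\le \sigma$, where $\sigma$ is the uniform probability on $S^1$, and $\nu(S^1)=\alpha$. Moreover $\widehat{\mu_V}(\xi)=\alpha^{-1}\int_{S^1}e^{-i\theta}\,d\nu(\theta)$. So it suffices to show: for every measure $\nu$ with $0\le\nu\le\sigma$ and total mass $\alpha$,
\[
    \Bigl|\int e^{-i\theta}\,d\nu\Bigr|\le \frac{\sin(\pi\alpha)}{\pi}.
\]

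This is a bathtub-principle estimate. After multiplying $\nu$ by a unimodular constant (rotating the circle), we may assume the integral is real and nonnegative, so it equals $\int\cos\theta\,d\nu$. We then maximize the linear functional $\nu\mapsto\int\cos\theta\,d\nu$ over the convex set $\{0\le\nu\le\sigma,\ \nu(S^1)=\alpha\}$. Its density $g=d\nu/d\sigma$ takes values in $[0,1]$ and has $\int g\,d\sigma=\alpha$. The bathtub principle says the maximizer is the indicator of a superlevel set of $\cos\theta$ of measure $\alpha$, namely the arc $|\theta|\le\pi\alpha$. The proof is the standard comparison: with $t=\cos(\pi\alpha)$ and $A$ the arc,
\[
    \int(\cos\theta-t)\bigl(\mathbf 1_A-g\bigr)\,d\sigma\ge0 .
\]
The integrand is pointwise nonnegative, since on $A$ we have $\cos\theta\ge t$ and $\mathbf 1_A-g\ge0$, while off $A$ both signs flip. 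The constant $t$ drops out because the two masses agree. Finally, $\int_A\cos\theta\,d\sigma=\frac{1}{2\pi}\cdot2\sin(\pi\alpha)=\sin(\pi\alpha)/\pi$. Dividing by $\alpha$ gives \eqref{eq:sharp-fourier-gap}.

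Positivity of $\eta(\alpha)$ is the elementary fact that $\sin s<s$ for $s=\pi\alpha\in(0,\pi)$.

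For sharpness, take a fixed nontrivial character $\xi$ and let $V=\{x:\ \xi\cdot x \bmod 2\pi\in[-\pi\alpha,\pi\alpha]\}$. This is a union of parallel strips. The equidistribution statement above gives $|V|=4ab\,\alpha$, and the pushforward $\nu$ is exactly $\sigma$ restricted to the extremal arc, so equality holds.

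The main point requiring care is the pushforward claim when $\xi$ is not primitive. If $\gcd$ considerations make $\xi=k\xi_0$, the map $\theta$ still pushes Haar measure to uniform measure; the Fourier-coefficient argument covers this case uniformly. Everything else is routine.
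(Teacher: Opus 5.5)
Your proposal is correct and follows the paper's route. You push normalized Haar measure forward along the character to the uniform measure on the circle, rotate so the integral is real, and apply the bathtub principle to $\cos$ with the extremal arc $|\theta|\le\pi\alpha$. Your explicit comparison $\int(\cos\theta-t)(\mathbf 1_A-g)\,d\sigma\ge0$ and your strip construction for sharpness supply steps the paper only asserts. One wording fix: state the phase normalization as rotating $\nu$ (equivalently, multiplying the integral, not $\nu$, by a unimodular constant), since it is the rotation invariance of $\sigma$ that preserves the constraint $0\le\nu\le\sigma$.
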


\begin{proof}
Let
\[
    dm(x)
    :=
    \frac{dx}{4ab}
\]
denote normalized Haar measure on $\mathbb T_{a,b}^2$. Then
\[
    d\mu_V
    =
    \alpha^{-1}\mathbf1_V\,dm.
\]
Fix $\xi\neq0$ and set
$\chi_\xi(x):=e^{i\xi\cdot x}$. 
The nontrivial character $\chi_\xi$ is a continuous surjective
homomorphism from $\mathbb T_{a,b}^2$ onto $\mathbb S^1$.
Accordingly, the pushforward of normalized Haar measure under
$\chi_\xi$ is normalized Haar measure on $\mathbb S^1$.

Choose $\theta\in\mathbb R$ such that
$e^{i\theta}
    \int_V e^{-i\xi\cdot x}\,dm(x)$
is nonnegative and real. Then
\begin{align}
    \left|
        \int_Ve^{-i\xi\cdot x}\,dm(x)
    \right|
    &=
    \int_V
    \cos(\xi\cdot x-\theta)\,dm(x).
    \label{eq:fourier-gap-real-part}
\end{align}
Since $\xi\cdot x-\theta$ modulo $2\pi$ is uniformly distributed
under Haar measure, the bathtub principle implies that among all
measurable subsets of measure $\alpha$, the right-hand side is
maximized by selecting the $\alpha$-fraction of phases on which
$\cos t$ is largest. Hence
\begin{align}
    \left|
        \int_Ve^{-i\xi\cdot x}\,dm(x)
    \right|
    &\leq
    \frac{1}{2\pi}
    \int_{-\pi\alpha}^{\pi\alpha}
    \cos t\,dt
    \notag\\
    &=
    \frac{\sin(\pi\alpha)}{\pi}.
    \label{eq:fourier-gap-extremal}
\end{align}
Dividing by $\alpha$ yields
\eqref{eq:sharp-fourier-gap}. Finally,
\[
    \sin(\pi\alpha)<\pi\alpha,
    \qquad 0<\alpha<1,
\]
so $\eta(\alpha)>0$.
\end{proof}

\begin{remark}
\label{rem:fourier-gap-small-alpha}
As $\alpha\downarrow0$,
\begin{equation}
    \eta(\alpha)
    =
    \frac{\pi^2}{6}\alpha^2
    +O(\alpha^4).
    \label{eq:eta-small-alpha}
\end{equation}
Thus the area gap is quadratic in the small-area regime.
\end{remark}

\subsection{Fourier decay from finite perimeter}
\label{subsec:perimeter-fourier-decay}

We next quantify the decay of the Fourier coefficients when $V$ has
finite perimeter on the torus.  We write
\[
    P_{\mathbb T}(V)
    :=
    |D\mathbf1_V|(\mathbb T_{a,b}^2).
\]

\begin{lemma}[Finite-perimeter Fourier decay]
\label{lem:finite-perimeter-fourier}
Assume that $V\subset\mathbb T_{a,b}^2$ has finite perimeter.
Then, for every
$\xi\in\Lambda_{a,b}^*\setminus\{0\}$,
\begin{equation}
    \left|
        \int_Ve^{-i\xi\cdot x}\,dx
    \right|
    \leq
    \frac{P_{\mathbb T}(V)}{|\xi|}.
    \label{eq:perimeter-fourier-indicator}
\end{equation}
Equivalently,
\begin{equation}
    |\widehat{\mu_V}(\xi)|
    \leq
    \frac{P_{\mathbb T}(V)}
         {|V|\,|\xi|}.
    \label{eq:perimeter-fourier-mu}
\end{equation}
\end{lemma}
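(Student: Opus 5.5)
The plan is to integrate by parts against the character, using the distributional gradient of $\mathbf 1_V$, and then project onto the direction of $\xi$. Fix $\xi\in\Lambda_{a,b}^*\setminus\{0\}$ and set $e:=\xi/|\xi|$. Because $\xi$ lies in the dual lattice, $\chi(x):=e^{-i\xi\cdot x}$ is a smooth, genuinely periodic function on $\mathbb T_{a,b}^2$. It satisfies
\[
    \nabla\chi=-i\xi\,\chi,
    \qquad\text{so}\qquad
    e\cdot\nabla\chi=-i|\xi|\,\chi.
\]
Hence
\[
    \int_V e^{-i\xi\cdot x}\,dx
    =
    \frac{i}{|\xi|}\int_{\mathbb T_{a,b}^2}\mathbf 1_V\,
    (e\cdot\nabla\chi)\,dx .
\]

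The key step is to use the definition of the distributional derivative on the torus, which holds because $V$ has finite perimeter. Since the torus has no boundary, the smooth test function $\chi$ is admissible, and
\[
    \int_{\mathbb T_{a,b}^2}\mathbf 1_V\,\partial_j\chi\,dx
    =
    -\int_{\mathbb T_{a,b}^2}\chi\,d(D_j\mathbf 1_V).
\]
Here $D\mathbf 1_V$ is a finite $\mathbb R^2$-valued Radon measure with total variation $P_{\mathbb T}(V)$. This gives
\[
    \left|\int_V e^{-i\xi\cdot x}\,dx\right|
    =
    \frac{1}{|\xi|}\left|\int \chi\, e\cdot dD\mathbf 1_V\right|
    \le
    \frac{1}{|\xi|}\int|\chi|\,d|D\mathbf 1_V|.
\]
The last inequality uses $|e\cdot \nu|\le 1$ for the polar density $\nu$ of $D\mathbf 1_V$ with respect to $|D\mathbf 1_V|$. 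Since $|\chi|=1$, the right-hand side is at most $P_{\mathbb T}(V)/|\xi|$, which is \eqref{eq:perimeter-fourier-indicator}.

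For \eqref{eq:perimeter-fourier-mu}, I will use the definition $\widehat{\mu_V}(\xi)=\int e^{-i\xi\cdot x}\,d\mu_V$. This equals $|V|^{-1}\int_V e^{-i\xi\cdot x}\,dx$, so the second bound follows by dividing the first by $|V|$.

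The only delicate point I expect is justifying the integration by parts in the periodic setting. This requires the perimeter to be taken as $|D\mathbf 1_V|$ on the torus itself, not on a fundamental domain in $\mathbb R^2$, so that no boundary terms appear. It also requires the lattice condition on $\xi$, which guarantees that $\chi$ is well defined on the torus.
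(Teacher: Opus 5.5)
Your proposal is correct and follows the paper's proof. The paper also tests the distributional gradient of $\mathbf 1_V$ on the torus against $e^{-i\xi\cdot x}$ and uses $\partial_e e^{-i\xi\cdot x}=-i|\xi|e^{-i\xi\cdot x}$ with $e=\xi/|\xi|$. It then bounds the result by $|D\mathbf 1_V|(\mathbb T_{a,b}^2)$ and divides by $|V|$. Your remarks on the polar decomposition and on the lattice condition making the character a legitimate periodic test function make explicit what the paper leaves implicit.
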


\begin{proof}
Since $V$ has finite perimeter,
$\mathbf1_V\in BV(\mathbb T_{a,b}^2)$ and
\[
    |D\mathbf1_V|(\mathbb T_{a,b}^2)
    =
    P_{\mathbb T}(V).
\]
Fix $\xi\neq0$ and write
\[
    e=\frac{\xi}{|\xi|}.
\]
For
\[
    \varphi(x)=e^{-i\xi\cdot x},
\]
one has
\[
    \partial_e\varphi
    =
    -i|\xi|\varphi.
\]
The definition of the distributional derivative therefore gives
\[
    -i|\xi|
    \int_{\mathbb T_{a,b}^2}
    \mathbf1_V(x)e^{-i\xi\cdot x}\,dx
    =
    -
    \int_{\mathbb T_{a,b}^2}
    e^{-i\xi\cdot x}\,
    e\cdot D\mathbf1_V(x).
\]
Taking absolute values yields
\[
    |\xi|
    \left|
        \int_Ve^{-i\xi\cdot x}\,dx
    \right|
    \leq
    |D\mathbf1_V|(\mathbb T_{a,b}^2),
\]
which proves \eqref{eq:perimeter-fourier-indicator}.
Division by $|V|$ gives
\eqref{eq:perimeter-fourier-mu}.
\end{proof}

\subsection{Explicit two-point clustering on lattice circles}
\label{subsec:two-point-clustering}

We now exploit the geometry of the dual lattice
$\Lambda_{a,b}^*$.

\begin{lemma}[Explicit two-point lattice clustering]
\label{lem:explicit-two-point-clustering}
Let $\Lambda\subset\mathbb R^2$ be a rank-two lattice of covolume
$\kappa$, and let
\[
    \Lambda_r
    =
    \Lambda\cap\{|\xi|=r\}.
\]
Define
\begin{equation}
    D_r
    :=
    \left(\frac{\kappa r}{2}\right)^{1/3}.
    \label{eq:cluster-distance}
\end{equation}
Construct a graph on $\Lambda_r$ by joining two distinct points when
their Euclidean distance is strictly smaller than $D_r$.

Then every connected component of this graph contains at most two
points. Distinct connected components have mutual Euclidean distance
at least $D_r$.
\end{lemma}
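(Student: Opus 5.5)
The plan is a Jarník-type area argument. Three distinct lattice points on a circle of radius $r$ span a nondegenerate lattice triangle, so its area is at least $\kappa/2$. If the three points are mutually close, the circumradius formula makes the area too small. The threshold $D_r=(\kappa r/2)^{1/3}$ is chosen exactly so that these two bounds conflict.

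\emph{Step 1 (lattice area lower bound).} Let $p_1,p_2,p_3\in\Lambda_r$ be distinct. Three distinct points on a circle are never collinear, so the triangle $p_1p_2p_3$ is nondegenerate. Write $\Lambda=A\mathbb Z^2$ with $|\det A|=\kappa$. Then $p_2-p_1=Am$ and $p_3-p_1=An$ for some $m,n\in\mathbb Z^2$, and the area is $\tfrac12|\det A|\,|\det(m,n)|$. Since $\det(m,n)$ is a nonzero integer, the area is at least $\kappa/2$.

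\emph{Step 2 (circumradius upper bound).} Let the triangle have side lengths $\ell_{12},\ell_{23},\ell_{13}$. Since it is inscribed in the circle of radius $r$, the classical formula gives
\[
\operatorname{Area}=\frac{\ell_{12}\,\ell_{23}\,\ell_{13}}{4r}.
\]
Now suppose a connected component contains at least three points. Because the component is connected, it contains a path $p_1 - p_2 - p_3$ of three distinct vertices. Then $\ell_{12}<D_r$ and $\ell_{23}<D_r$, and the triangle inequality gives $\ell_{13}<2D_r$. Therefore
\[
\operatorname{Area}<\frac{2D_r^3}{4r}=\frac{D_r^3}{2r}=\frac{\kappa r/2}{2r}=\frac{\kappa}{4}<\frac{\kappa}{2}.
\]
This contradicts Step 1, so every component has at most two points.

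\emph{Step 3 (separation).} Take $\xi,\zeta$ in distinct components. They cannot be adjacent in the graph, since adjacency would place them in the same component. By the definition of the edges, this means $|\xi-\zeta|\ge D_r$. Taking the infimum over all such pairs gives the stated mutual distance bound.

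The only step needing care is the one I expect to be the main obstacle: choosing the path $p_1 - p_2 - p_3$ inside a component. A component is not a clique, so only two of the three side lengths are directly below $D_r$. The third side must be bounded by $2D_r$ through the triangle inequality. The resulting factor-of-two loss still fits, because the computed bound $\kappa/4$ stays below $\kappa/2$. Everything else is the standard lattice determinant computation.
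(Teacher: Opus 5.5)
Your proof is correct and follows essentially the same route as the paper. A nondegenerate lattice triangle inscribed in the circle has area at least $\kappa/2$. A length-two path in a component, combined with the triangle inequality and the circumradius formula, forces area below $D_r^3/(2r)=\kappa/4$. Separation is immediate from the edge rule.

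Two of your justifications are details the paper leaves implicit: the determinant computation $\tfrac12|\det A|\,|\det(m,n)|\ge\kappa/2$, and the existence of a path $p_1p_2p_3$ in any connected component with at least three vertices.
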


\begin{proof}
The separation of distinct components follows directly from the
definition.

Suppose that a connected component contains at least three points.
Then there exist three distinct points
$\xi_1,\xi_2,\xi_3$ in the component such that
\[
    |\xi_1-\xi_2|<D_r,
    \qquad
    |\xi_2-\xi_3|<D_r.
\]
Consequently,
\[
    |\xi_1-\xi_3|<2D_r.
\]
The three points lie on the same Euclidean circle and are therefore
noncollinear. Since they belong to $\Lambda$, the Euclidean area
$\mathcal A$ of the lattice triangle they determine satisfies
\begin{equation}
    \mathcal A
    \geq
    \frac{\kappa}{2}.
    \label{eq:lattice-triangle-lower-area}
\end{equation}
On the other hand, the circumradius of the triangle equals $r$.
Writing its three side lengths as $a_1,a_2,a_3$, the circumradius
formula gives
\[
    \mathcal A
    =
    \frac{a_1a_2a_3}{4r}.
\]
Hence
\[
    \mathcal A
    <
    \frac{D_r\cdot D_r\cdot2D_r}{4r}
    =
    \frac{D_r^3}{2r}
    =
    \frac{\kappa}{4},
\]
contradicting \eqref{eq:lattice-triangle-lower-area}.
Thus every component has cardinality at most two.
\end{proof}

For the rectangular dual lattice
\eqref{eq:quant-dual-lattice}, this gives
\begin{equation}
    D_r
    =
    \left(
        \frac{\pi^2r}{2ab}
    \right)^{1/3}.
    \label{eq:rectangle-cluster-distance}
\end{equation}

\subsection{Inter-cluster interactions}
\label{subsec:inter-cluster-interactions}

The following elementary packing estimate will be used to sum the
Fourier interactions between distinct clusters.

\begin{lemma}[Reciprocal-distance packing on a circle]
\label{lem:reciprocal-circle-packing}
Let $Y$ be a $D$-separated subset of the Euclidean circle $S_r$,
and let $\xi\in S_r$ satisfy
\[
    \operatorname{dist}(\xi,Y)\geq D.
\]
Assume $0<D\leq r$. Then
\begin{equation}
    \sum_{\eta\in Y}
    \frac{1}{|\xi-\eta|}
    \leq
    \frac{\pi}{D}
    \left[
        1+
        \log\left(
            1+\frac{\pi r}{D}
        \right)
    \right].
    \label{eq:reciprocal-packing}
\end{equation}
\end{lemma}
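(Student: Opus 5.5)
Parametrize the circle by arclength and order the points of $Y$ by their arclength distance from $\xi$. Write $s(\eta)\in[0,\pi r]$ for the shorter arclength between $\xi$ and $\eta$. The chord and arc are related by $|\xi-\eta| = 2r\sin(s/(2r))$. Since $s/(2r)\in[0,\pi/2]$ and $\sin t\ge 2t/\pi$ there, we get $|\xi-\eta|\ge 2s/\pi$, i.e.
\[
    \frac{1}{|\xi-\eta|}\le \frac{\pi}{2s(\eta)}.
\]
The chordal $D$-separation of $Y$ and the hypothesis $\operatorname{dist}(\xi,Y)\ge D$ transfer to arclength as $s\ge D$ for each point, and as arclength separation at least $D$ between points of $Y$, because arc length dominates chord length. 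So it suffices to bound $\sum_{\eta\in Y} \pi/(2s(\eta))$.

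Next, split $Y$ into the two semicircles on either side of $\xi$ (arclength measured clockwise and counterclockwise, each in $[0,\pi r]$). On one side, the points have arclength coordinates $s_1<s_2<\dots$ with $s_1\ge D$ and $s_{k+1}-s_k\ge D$, hence $s_k\ge kD$. The number of points on that side is at most $N=\lfloor \pi r/D\rfloor$, since $s_k\le\pi r$. Therefore each side contributes at most
\[
    \frac{\pi}{2}\sum_{k=1}^{N}\frac{1}{kD}\le \frac{\pi}{2D}\bigl(1+\log N\bigr)\le\frac{\pi}{2D}\Bigl[1+\log\Bigl(1+\frac{\pi r}{D}\Bigr)\Bigr],
\]
using the harmonic sum bound $\sum_{k\le N}1/k\le 1+\log N$. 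If $N=0$ the side contributes nothing, and the bound still holds. Adding the two sides gives exactly the factor $\pi/D$ in \eqref{eq:reciprocal-packing}. The assumption $D\le r$ is only needed to keep the logarithm meaningful and the count at least one; it does not enter the estimate otherwise.

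The only delicate point is the bookkeeping at the antipode $s=\pi r$: a point there could be counted on both sides. Assign it to one side only. Then the separation along each side is unaffected, and the counting goes through as above. I expect no real obstacle beyond making sure the chord–arc comparison is applied in the correct direction for both the lower bound on distances and the separation, and both directions hold since arc length is at least chord length.
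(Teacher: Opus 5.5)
Your proof is correct and follows the paper's argument essentially step for step: the chord–arc comparison via $\sin t\ge 2t/\pi$, splitting $Y$ into the two semicircles emanating from $\xi$, the $j$th point on each side having arc distance at least $jD$, and the harmonic bound $H_N\le 1+\log N$. Your explicit count $N\le\lfloor \pi r/D\rfloor$ and your handling of a point at the antipode fill in details that the paper leaves implicit.
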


\begin{proof}
For $\eta\in S_r$, let $s(\xi,\eta)\in[0,\pi r]$ denote the shorter
arc distance between $\xi$ and $\eta$. Then
\[
    |\xi-\eta|
    =
    2r
    \sin\left(
        \frac{s(\xi,\eta)}{2r}
    \right).
\]
Since
\[
    \sin t\geq\frac{2t}{\pi},
    \qquad 0\leq t\leq\frac{\pi}{2},
\]
we obtain
\begin{equation}
    |\xi-\eta|
    \geq
    \frac{2}{\pi}s(\xi,\eta).
    \label{eq:chord-arc-lower}
\end{equation}

Order the points of $Y$ on each of the two semicircles emanating from
$\xi$ by increasing arc distance. Since Euclidean separation by $D$
implies arc separation by at least $D$, the $j$th point on either
side has arc distance at least $jD$. Hence
\[
    \frac{1}{|\xi-\eta_j|}
    \leq
    \frac{\pi}{2jD}.
\]
Summing over the two semicircles and using
\[
    H_N\leq1+\log N
\]
gives \eqref{eq:reciprocal-packing}.
\end{proof}

\subsection{Quantitative high-frequency non-localization}
\label{subsec:quantitative-high-frequency}

We now combine the preceding ingredients.
For the shell $\Lambda_r$, let
\[
    \Lambda_r
    =
    \bigsqcup_{\beta\in\mathcal B_r}
    \Omega_\beta
\]
denote the connected components determined by
\cref{lem:explicit-two-point-clustering}. Thus,
\[
    |\Omega_\beta|\leq2
\]
and distinct clusters are separated by at least $D_r$.
We define
\begin{equation}
    \varepsilon_V(r)
    :=
    \frac{2\pi P_{\mathbb T}(V)}
         {|V|D_r}
    \left[
        1+
        \log\left(
            1+\frac{\pi r}{D_r}
        \right)
    \right].
    \label{eq:epsilonVr}
\end{equation}

\begin{theorem}[Quantitative high-frequency non-localization]
\label{thm:quantitative-high-frequency}
Let $V\subset\mathbb T_{a,b}^2$ be measurable with finite perimeter
and
\[
    0<|V|<4ab.
\]
Let
\[
    \alpha=\frac{|V|}{4ab}.
\]
Assume that
\begin{equation}
    r^2\geq\frac{\kappa_{a,b}}{2}.
    \label{eq:packing-frequency-condition}
\end{equation}
Then every toral eigenfunction
\[
    -\Delta U=r^2U
\]
satisfies
\begin{equation}
    \frac{\|U\|_{L^2(V)}^2}
         {\|U\|_{L^2(\mathbb T_{a,b}^2)}^2}
    \geq
    \alpha
    \left[
        \eta(\alpha)-\varepsilon_V(r)
    \right],
    \label{eq:quantitative-high-frequency-general}
\end{equation}
whenever the quantity in brackets is positive.

In particular, if
\begin{equation}
    \varepsilon_V(r)
    \leq
    \frac{\eta(\alpha)}{2},
    \label{eq:high-frequency-condition}
\end{equation}
then
\begin{equation}
{
    \frac{\|U\|_{L^2(V)}}
         {\|U\|_{L^2(\mathbb T_{a,b}^2)}}
    \geq
    \left[
        \frac{\alpha}{2}
        \left(
            1-
            \frac{\sin(\pi\alpha)}{\pi\alpha}
        \right)
    \right]^{1/2}.
    }
    \label{eq:quantitative-high-frequency-final}
\end{equation}
The lower bound in
\eqref{eq:quantitative-high-frequency-final} is independent of the
eigenvalue, its multiplicity, and the geometry of $V$ beyond its area
fraction.
\end{theorem}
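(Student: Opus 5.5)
The plan is to write the local mass as a Hermitian quadratic form in the Fourier coefficients. The matrix of this form has entries given by Fourier coefficients of $\mu_V$. I will split it into a block-diagonal part indexed by the clusters $\Omega_\beta$ of \cref{lem:explicit-two-point-clustering}, and an off-block remainder.

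\emph{Expanding the local mass.} Write $U=\sum_{\xi\in\Lambda_r}c_\xi e^{i\xi\cdot x}$. Then
\[
\int_V|U|^2\,dx=\sum_{\xi,\eta\in\Lambda_r}c_\xi\overline{c_\eta}\int_V e^{i(\xi-\eta)\cdot x}\,dx
=|V|\sum_{\xi,\eta}c_\xi\overline{c_\eta}\,\widehat{\mu_V}(\eta-\xi),
\]
where $\widehat{\mu_V}(\zeta)=\int e^{-i\zeta\cdot x}\,d\mu_V$. Dividing by \eqref{eq:quant-parseval} gives
\[
\frac{\|U\|_{L^2(V)}^2}{\|U\|^2_{L^2(\mathbb T^2_{a,b})}}=\alpha\,\frac{\langle Mc,c\rangle}{\|c\|_{\ell^2}^2},
\qquad M_{\eta\xi}=\widehat{\mu_V}(\eta-\xi).
\]
Note that $M$ is Hermitian and has unit diagonal. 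So it suffices to show $\langle Mc,c\rangle\ge(\eta(\alpha)-\varepsilon_V(r))\|c\|^2$.

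\emph{Diagonal blocks.} Decompose $M=M_{\mathrm{blk}}+M_{\mathrm{off}}$, where $M_{\mathrm{blk}}$ keeps the entries with $\xi,\eta$ in the same cluster $\Omega_\beta$. Each cluster has at most two points. A one-point block is $(1)$. A two-point block is $\begin{pmatrix}1&m\\ \bar m&1\end{pmatrix}$ with $m=\widehat{\mu_V}(\zeta)$ for some $\zeta\ne0$, and its smallest eigenvalue is $1-|m|$. By \cref{lem:sharp-fourier-gap}, $1-|m|\ge\eta(\alpha)$. Hence $\langle M_{\mathrm{blk}}c,c\rangle\ge\eta(\alpha)\|c\|^2$.

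\emph{Off-block part, the main step.} I will bound $\|M_{\mathrm{off}}\|$ by the Schur test, that is, by the maximal row sum $\sup_\xi\sum_{\eta\notin\beta(\xi)}|\widehat{\mu_V}(\eta-\xi)|$. By \cref{lem:finite-perimeter-fourier}, each term is at most $P_{\mathbb T}(V)/(|V|\,|\xi-\eta|)$.

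The obstacle is that the points outside the cluster of $\xi$ need not be $D_r$-separated among themselves, since two points of the same cluster may be close. The remedy is to choose one point from each cluster to form a set $Y_1$, and to put the remaining points (at most one per cluster) into a set $Y_2$. Each $Y_j$ is $D_r$-separated, because distinct clusters are at distance at least $D_r$. Each $Y_j$ also lies at distance at least $D_r$ from $\xi$.

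\cref{lem:reciprocal-circle-packing} needs $D_r\le r$. This is equivalent to $D_r^3=\kappa_{a,b}r/2\le r^3$, that is, $r^2\ge\kappa_{a,b}/2$, which is exactly hypothesis \eqref{eq:packing-frequency-condition}. Applying the lemma to $Y_1$ and to $Y_2$ shows that the row sum is at most
\[
2\cdot\frac{\pi}{D_r}\Bigl[1+\log\Bigl(1+\frac{\pi r}{D_r}\Bigr)\Bigr]\frac{P_{\mathbb T}(V)}{|V|}=\varepsilon_V(r).
\]
Therefore $|\langle M_{\mathrm{off}}c,c\rangle|\le\varepsilon_V(r)\|c\|^2$.

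\emph{Conclusion.} Combining the two parts gives $\langle Mc,c\rangle\ge(\eta(\alpha)-\varepsilon_V(r))\|c\|^2$, which is \eqref{eq:quantitative-high-frequency-general}. Under \eqref{eq:high-frequency-condition}, the bracket is at least $\eta(\alpha)/2$. Taking square roots gives \eqref{eq:quantitative-high-frequency-final}. The resulting bound depends only on $\alpha$ and holds for every coefficient vector $c$, so it is uniform over the whole eigenspace.
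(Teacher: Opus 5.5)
Your proposal is correct and follows the paper's proof essentially step for step. Both split the Gram matrix into intra-cluster blocks, bounded below by $\eta(\alpha)$ via the sharp Fourier gap, and an inter-cluster remainder, bounded via finite-perimeter decay, a partition into two $D_r$-separated sets, the reciprocal packing lemma, and Schur's test. You also make explicit two points the paper leaves implicit: how the two separated sets $Y_1,Y_2$ are built, and that the hypothesis $r^2\geq\kappa_{a,b}/2$ is exactly the condition $D_r\leq r$ required by the packing lemma.
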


\begin{proof}
Write
\[
    U(x)
    =
    \sum_{\xi\in\Lambda_r}
    c_\xi e^{i\xi\cdot x}.
\]
With respect to the probability measure $\mu_V$,
\begin{equation}
    \int_{\mathbb T_{a,b}^2}
    |U|^2\,d\mu_V
    =
    \sum_{\xi,\eta\in\Lambda_r}
    c_\xi\overline{c_\eta}
    \widehat{\mu_V}(\eta-\xi).
    \label{eq:observation-Gram}
\end{equation}
We split the corresponding Gram matrix as
\[
    G_V
    =
    G_{\mathrm{block}}+B_r,
\]
where $G_{\mathrm{block}}$ retains only interactions between
frequencies belonging to the same cluster.
If
\[
    \Omega_\beta=\{\xi,\eta\},
\]
its block is
\[
    \begin{pmatrix}
        1 & \widehat{\mu_V}(\eta-\xi)\\
        \overline{\widehat{\mu_V}(\eta-\xi)} & 1
    \end{pmatrix}.
\]
The smallest eigenvalue of this matrix equals
\[
    1-
    |\widehat{\mu_V}(\eta-\xi)|.
\]
By Theorem \ref{lem:sharp-fourier-gap},
\[
    G_{\mathrm{block}}
    \geq
    \eta(\alpha)I.
    \label{eq:block-lower}
\]
It remains to estimate $B_r$. By Theorem
\ref{lem:finite-perimeter-fourier},
\[
    |(B_r)_{\xi\eta}|
    \leq
    \frac{P_{\mathbb T}(V)}
         {|V|\,|\xi-\eta|}
\]
whenever $\xi$ and $\eta$ belong to distinct clusters.

Since each cluster contains at most two points, the set of
frequencies outside the cluster containing a fixed $\xi$ can be
partitioned into two $D_r$-separated sets. Applying
\cref{lem:reciprocal-circle-packing} to each set yields
\[
    \sup_{\xi\in\Lambda_r}
    \sum_{\eta\in\Lambda_r}
    |(B_r)_{\xi\eta}|
    \leq
    \varepsilon_V(r).
\]
Since $B_r$ is Hermitian, Schur's test gives
\begin{equation}
    \|B_r\|_{\ell^2\to\ell^2}
    \leq
    \varepsilon_V(r).
    \label{eq:Schur-Br}
\end{equation}
Combining
\eqref{eq:block-lower} and \eqref{eq:Schur-Br},
\[
    G_V
    \geq
    \bigl(
        \eta(\alpha)-\varepsilon_V(r)
    \bigr)I.
\]
Hence
\[
    \int|U|^2\,d\mu_V
    \geq
    \bigl(
        \eta(\alpha)-\varepsilon_V(r)
    \bigr)
    \sum_{\xi\in\Lambda_r}|c_\xi|^2.
\]
Using
\[
    \int|U|^2\,d\mu_V
    =
    \frac{1}{|V|}
    \|U\|_{L^2(V)}^2
\]
and \eqref{eq:quant-parseval}, we obtain
\[
    \frac{\|U\|_{L^2(V)}^2}
         {\|U\|_{L^2(\mathbb T_{a,b}^2)}^2}
    \geq
    \frac{|V|}{4ab}
    \bigl(
        \eta(\alpha)-\varepsilon_V(r)
    \bigr),
\]
which is \eqref{eq:quantitative-high-frequency-general}. The final
claim follows from
\eqref{eq:high-frequency-condition}.
\end{proof}

\subsection{Explicit high-frequency threshold and asymptotic universality}
\label{subsec:explicit-high-frequency-threshold}

The estimate in
\cref{thm:quantitative-high-frequency} becomes effective once
\[
    \varepsilon_V(r)
    \leq
    \frac{\eta(\alpha)}{2}.
\]
We now replace this implicit condition by a closed-form sufficient
frequency threshold.

Recall that
\[
    \kappa_{a,b}
    =
    \frac{\pi^2}{ab}.
\]
Introduce the dimensionless frequency
\begin{equation}
    \rho
    :=
    \frac{r}{\sqrt{\kappa_{a,b}}},
    \label{eq:dimensionless-frequency}
\end{equation}
and the dimensionless perimeter parameter
\begin{equation}
    \chi_V
    :=
    \frac{P_{\mathbb T}(V)}
         {|V|\sqrt{\kappa_{a,b}}}.
    \label{eq:dimensionless-perimeter}
\end{equation}
Since
\[
    D_r
    =
    \left(
        \frac{\kappa_{a,b}r}{2}
    \right)^{1/3},
\]
we may write
\begin{equation}
    D_r
    =
    \sqrt{\kappa_{a,b}}
    \left(
        \frac{\rho}{2}
    \right)^{1/3}.
    \label{eq:Dr-dimensionless}
\end{equation}
Consequently, the error term
\eqref{eq:epsilonVr} becomes
\begin{equation}
    \varepsilon_V(r)
    =
    2\pi 2^{1/3}
    \chi_V
    \rho^{-1/3}
    \left[
        1+
        \log\left(
            1+
            \pi 2^{1/3}\rho^{2/3}
        \right)
    \right].
    \label{eq:epsilon-dimensionless}
\end{equation}

We use the elementary inequality
\begin{equation}
    \log(1+x)
    \leq
    4x^{1/4},
    \qquad x\geq0.
    \label{eq:log-elementary-bound}
\end{equation}
If $\rho\geq1$, then
\[
    1\leq\rho^{1/6},
\]
and hence
\begin{align}
    1+
    \log\left(
        1+\pi2^{1/3}\rho^{2/3}
    \right)
    &\leq
    1+
    4\left(
        \pi2^{1/3}
    \right)^{1/4}
    \rho^{1/6}
    \notag\\
    &\leq
    \left[
        1+
        4\left(
            \pi2^{1/3}
        \right)^{1/4}
    \right]
    \rho^{1/6}.
    \label{eq:log-frequency-bound}
\end{align}
Define
\begin{equation}
    C_0
    :=
    2\pi2^{1/3}
    \left[
        1+
        4\left(
            \pi2^{1/3}
        \right)^{1/4}
    \right].
    \label{eq:C0}
\end{equation}
Then
\begin{equation}
{
    \varepsilon_V(r)
    \leq
    C_0\chi_V\rho^{-1/6},
    \qquad
    \rho\geq1.
    }
    \label{eq:epsilon-explicit-upper}
\end{equation}

This gives an explicit sufficient threshold.

\begin{proposition}[Explicit toral frequency threshold]
\label{prop:explicit-toral-threshold}
Let $V\subset\mathbb T_{a,b}^2$ satisfy the assumptions of
\cref{thm:quantitative-high-frequency}. Define
\begin{equation}
    \rho_*(V)
    :=
    \max
    \left\{
        1,\,
        \left(
            \frac{
                2C_0\chi_V
            }{
                \eta(\alpha)
            }
        \right)^6
    \right\}.
    \label{eq:rho-star-torus}
\end{equation}
Then every $r$ satisfying
\begin{equation}
    r
    \geq
    r_*(V)
    :=
    \sqrt{\kappa_{a,b}}\,
    \rho_*(V)
    \label{eq:r-star-torus}
\end{equation}
obeys
\begin{equation}
    \varepsilon_V(r)
    \leq
    \frac{\eta(\alpha)}{2}.
    \label{eq:epsilon-half-eta-explicit}
\end{equation}
Consequently, every toral eigenfunction
\[
    -\Delta U=r^2U,
    \qquad
    r\geq r_*(V),
\]
satisfies
\begin{equation}
    \frac{\|U\|_{L^2(V)}}
         {\|U\|_{L^2(\mathbb T_{a,b}^2)}}
    \geq
    F_{\mathrm{HF}}(\alpha),
    \label{eq:explicit-torus-HF}
\end{equation}
where
\begin{equation}
    F_{\mathrm{HF}}(\alpha)
    :=
    \left[
        \frac{\alpha}{2}
        \left(
            1-
            \frac{\sin(\pi\alpha)}
                 {\pi\alpha}
        \right)
    \right]^{1/2}.
    \label{eq:FHF}
\end{equation}
\end{proposition}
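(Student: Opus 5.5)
The plan is to convert the condition $r\ge r_*(V)$ into a statement about the dimensionless frequency $\rho=r/\sqrt{\kappa_{a,b}}$, and then use the explicit upper bound \eqref{eq:epsilon-explicit-upper}. Dividing \eqref{eq:r-star-torus} by $\sqrt{\kappa_{a,b}}$ gives $\rho\ge\rho_*(V)$.

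By the definition \eqref{eq:rho-star-torus}, this yields two inequalities at once:
\begin{itemize}
\item $\rho\ge1$, which is the regime in which \eqref{eq:epsilon-explicit-upper} holds;
\item $\rho\ge\bigl(2C_0\chi_V/\eta(\alpha)\bigr)^6$.
\end{itemize}
Since $t\mapsto t^{-1/6}$ is decreasing on $(0,\infty)$, the second inequality gives $\rho^{-1/6}\le \eta(\alpha)/(2C_0\chi_V)$. This step needs $\chi_V>0$, which holds because $0<|V|<4ab$ forces $V$ to have nonzero perimeter on the connected torus. It also uses $\eta(\alpha)>0$, which is \cref{lem:sharp-fourier-gap}. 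Inserting the bound into \eqref{eq:epsilon-explicit-upper} gives
\[
\varepsilon_V(r)\le C_0\chi_V\rho^{-1/6}\le \frac{\eta(\alpha)}{2},
\]
which is \eqref{eq:epsilon-half-eta-explicit}.

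For the consequence, I would apply \cref{thm:quantitative-high-frequency}. Its additional hypothesis \eqref{eq:packing-frequency-condition}, namely $r^2\ge\kappa_{a,b}/2$, follows immediately from $\rho\ge1$, since that gives $r^2\ge\kappa_{a,b}$.

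The threshold \eqref{eq:high-frequency-condition} is precisely what we have just proved. Hence \eqref{eq:quantitative-high-frequency-final} holds, and this is \eqref{eq:explicit-torus-HF} with $F_{\mathrm{HF}}(\alpha)$ as in \eqref{eq:FHF}. The bracket $\eta(\alpha)-\varepsilon_V(r)\ge\eta(\alpha)/2>0$ is positive, as that theorem requires.

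The only point requiring care is the derivation of \eqref{eq:epsilon-explicit-upper} itself, which the paper has already carried out. Two facts must be checked there. The first is the elementary inequality $\log(1+x)\le4x^{1/4}$. At the origin both sides vanish. For the derivatives, $1/(1+x)\le x^{-3/4}$ is immediate for $x\le1$, and for $x\ge1$ it holds because $x^{3/4}\le x\le 1+x$. The second is the substitution producing \eqref{eq:epsilon-dimensionless}, in particular $\pi r/D_r=\pi2^{1/3}\rho^{2/3}$. I expect no genuine obstacle beyond this bookkeeping, together with noting that monotonicity in $\rho$ lets a single threshold cover all larger $r$.
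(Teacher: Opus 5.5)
Your proposal is correct and follows the paper's proof essentially verbatim: it passes to $\rho\geq\rho_*(V)\geq 1$, invokes \eqref{eq:epsilon-explicit-upper}, uses $\rho^{-1/6}\leq\eta(\alpha)/(2C_0\chi_V)$, and then applies \cref{thm:quantitative-high-frequency}. Your explicit check that $\rho\geq 1$ implies that theorem's hypothesis $r^2\geq\kappa_{a,b}/2$ is a detail the paper leaves implicit, and is worth keeping.
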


\begin{proof}
If $r\geq r_*(V)$, then
\[
    \rho
    =
    \frac{r}{\sqrt{\kappa_{a,b}}}
    \geq
    \rho_*(V)
    \geq1.
\]
Thus \eqref{eq:epsilon-explicit-upper} applies. Moreover,
\[
    \rho^{-1/6}
    \leq
    \frac{\eta(\alpha)}
         {2C_0\chi_V},
\]
and therefore
\[
    \varepsilon_V(r)
    \leq
    C_0\chi_V\rho^{-1/6}
    \leq
    \frac{\eta(\alpha)}{2}.
\]
The conclusion now follows from
\cref{thm:quantitative-high-frequency}.
\end{proof}

\begin{remark}[Implicit versus closed-form thresholds]
\label{rem:implicit-explicit-threshold}
The sharper implicit threshold associated with the present estimate may be defined
implicitly as the smallest frequency above which
\[
    \varepsilon_V(r)
    \leq
    \frac{\eta(\alpha)}{2}.
\]
Proposition~\ref{prop:explicit-toral-threshold} replaces this
transcendental condition by a closed-form sufficient threshold.  The
latter is convenient for quantitative statements but is not expected
to be optimal.
\end{remark}

For small observation sets,
\begin{equation}
    F_{\mathrm{HF}}(\alpha)
    =
    \frac{\pi}{\sqrt{12}}\,
    \alpha^{3/2}
    +
    O(\alpha^{7/2}),
    \qquad
    \alpha\downarrow0.
    \label{eq:FHF-small-alpha}
\end{equation}

\subsection{Transfer to Dirichlet rectangles}
\label{subsec:quantitative-transfer-rectangle}

Let $V\subset R$, and let
$V^\sharp\subset\mathbb T_{a,b}^2$ denote its fourfold reflected
extension. Then
\begin{equation}
    \frac{|V^\sharp|}
         {|\mathbb T_{a,b}^2|}
    =
    \frac{|V|}{|R|}
    =:\alpha.
    \label{eq:reflected-area-fraction}
\end{equation}
If $u$ is a Dirichlet eigenfunction on $R$ and $\widetilde u$ denotes
its odd periodic extension to $\mathbb T_{a,b}^2$, then
\begin{equation}
    \frac{
        \|\widetilde u\|_{L^2(V^\sharp)}^2
    }{
        \|\widetilde u\|_{L^2(\mathbb T_{a,b}^2)}^2
    }
    =
    \frac{
        \|u\|_{L^2(V)}^2
    }{
        \|u\|_{L^2(R)}^2
    }.
    \label{eq:reflected-ratio-identity}
\end{equation}
We define the reflected perimeter by
\begin{equation}
    P_{\mathrm{ref}}(V)
    :=
    P_{\mathbb T}(V^\sharp).
    \label{eq:reflected-perimeter}
\end{equation}
This formulation avoids imposing additional trace assumptions on
$V$ along $\partial R$.
For the reflected set, introduce
\begin{equation}
    \chi_{\mathrm{ref}}(V)
    :=
    \frac{
        P_{\mathrm{ref}}(V)
    }{
        |V^\sharp|
        \sqrt{\kappa_{a,b}}
    }.
    \label{eq:chi-ref}
\end{equation}
Since
\[
    |V^\sharp|=4|V|,
    \qquad
    \sqrt{\kappa_{a,b}}
    =
    \frac{\pi}{\sqrt{ab}},
\]
we have the explicit identity
\begin{equation}
    \chi_{\mathrm{ref}}(V)
    =
    \frac{
        P_{\mathrm{ref}}(V)\sqrt{ab}
    }{
        4\pi|V|
    }.
    \label{eq:chi-ref-explicit}
\end{equation}
We can now state the quantitative rectangle theorem with a completely
explicit sufficient spectral threshold.

\begin{theorem}[Explicit high-frequency non-localization on rectangles]
\label{thm:explicit-high-frequency-rectangle}
Let
\[
    R=(0,a)\times(0,b),
\]
and let $V\subset R$ be measurable such that its reflected extension
$V^\sharp$ has finite perimeter and $0<|V|<|R|$. Set
\begin{equation}
    \alpha
    :=
    \frac{|V|}{|R|}
    =
    \frac{|V|}{ab},
    \qquad
    \eta(\alpha)
    :=
    1-
    \frac{\sin(\pi\alpha)}
         {\pi\alpha}.
    \label{eq:rectangle-alpha-eta}
\end{equation}
Define
\begin{equation}
    \rho_*(R,V)
    :=
    \max
    \left\{
        1,\,
        \left(
            \frac{
                2C_0
                \chi_{\mathrm{ref}}(V)
            }{
                \eta(\alpha)
            }
        \right)^6
    \right\}
    \label{eq:rho-star-rectangle}
\end{equation}
and
\begin{equation}
    \lambda_*(R,V)
    :=
    \frac{\pi^2}{ab}
    \rho_*(R,V)^2.
    \label{eq:lambda-star-rectangle}
\end{equation}
Equivalently,
\begin{equation}
{
    \lambda_*(R,V)
    =
    \frac{\pi^2}{ab}
    \max
    \left\{
        1,\,
        \left[
            \frac{
                C_0
                P_{\mathrm{ref}}(V)
                \sqrt{ab}
            }{
                2\pi
                |V|
                \eta(\alpha)
            }
        \right]^{12}
    \right\}.
    }
    \label{eq:lambda-star-explicit}
\end{equation}

Then every Dirichlet Laplacian eigenfunction
\begin{equation}
    -\Delta_Ru=\lambda u,
    \qquad
    \lambda\geq\lambda_*(R,V),
    \label{eq:rectangle-eigenvalue-threshold}
\end{equation}
satisfies
\begin{equation}
{
    \frac{\|u\|_{L^2(V)}}
         {\|u\|_{L^2(R)}}
    \geq
    \left[
        \frac{\alpha}{2}
        \left(
            1-
            \frac{\sin(\pi\alpha)}
                 {\pi\alpha}
        \right)
    \right]^{1/2}.
    }
    \label{eq:rectangle-area-only-final}
\end{equation}
The estimate is uniform in the eigenvalue, its multiplicity, and the
choice of eigenfunction within the corresponding eigenspace.
\end{theorem}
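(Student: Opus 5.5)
The proof will transfer the problem to the torus $\mathbb T_{a,b}^2$ and then apply Proposition~\ref{prop:explicit-toral-threshold} to the reflected observation set $V^\sharp$.

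First, the odd extension. Let $u\in E_\lambda(R)$ with $\lambda\geq\lambda_*(R,V)$, and let $\widetilde u$ be its odd reflection across $x_1=0$ and $x_2=0$, extended $(2a,2b)$-periodically. I will check that $\widetilde u$ is a toral eigenfunction with $-\Delta\widetilde u=\lambda\widetilde u$. The cleanest route is spectral. The Dirichlet eigenspace $E_\lambda(R)$ is spanned by the products $\sin(m\pi x_1/a)\sin(n\pi x_2/b)$ with $(m\pi/a)^2+(n\pi/b)^2=\lambda$. Each product is already an odd, $(2a,2b)$-periodic trigonometric polynomial. Its frequencies lie in $\Lambda_{a,b}^*$ and all have modulus $r=\sqrt\lambda$. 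By linearity, $\widetilde u$ has the finite Fourier form \eqref{eq:quant-toral-eigenfunction} with $r=\sqrt\lambda$, for every vector of the eigenspace. This holds regardless of multiplicity.

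Next, the norms. Odd reflection preserves $|\widetilde u|^2$ under each of the four reflections. Since $V^\sharp$ is the fourfold reflected copy of $V$, both norms in the ratio pick up the same factor $4$, which gives \eqref{eq:reflected-ratio-identity}. Also $|V^\sharp|=4|V|$ and $|\mathbb T^2_{a,b}|=4ab$, so the toral area fraction is again $\alpha\in(0,1)$, as in \eqref{eq:reflected-area-fraction}. Finally, $V^\sharp$ has finite perimeter by hypothesis. So $V^\sharp$ satisfies the assumptions of Theorem~\ref{thm:quantitative-high-frequency}, with $\chi_{V^\sharp}=\chi_{\mathrm{ref}}(V)$ by \eqref{eq:chi-ref}.

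Then the threshold matching, which is the only step needing care. We have $\rho_*(V^\sharp)=\rho_*(R,V)$ by comparing \eqref{eq:rho-star-torus} with \eqref{eq:rho-star-rectangle}. Proposition~\ref{prop:explicit-toral-threshold} requires
\[
r\geq\sqrt{\kappa_{a,b}}\,\rho_*,
\]
that is, $\lambda=r^2\geq\kappa_{a,b}\rho_*^2=\frac{\pi^2}{ab}\rho_*^2=\lambda_*(R,V)$, which is exactly our hypothesis. The proposition also uses $\rho\geq1$, which gives $r^2\geq\kappa_{a,b}\geq\kappa_{a,b}/2$; this is the packing condition \eqref{eq:packing-frequency-condition}, so it holds automatically. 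The proposition then yields
\[
\|\widetilde u\|_{L^2(V^\sharp)}\geq F_{\mathrm{HF}}(\alpha)\,\|\widetilde u\|_{L^2(\mathbb T_{a,b}^2)},
\]
and \eqref{eq:reflected-ratio-identity} turns this into \eqref{eq:rectangle-area-only-final}.

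It remains to verify the equivalent form \eqref{eq:lambda-star-explicit}. Substituting \eqref{eq:chi-ref-explicit} gives
\[
\frac{2C_0\chi_{\mathrm{ref}}}{\eta}=\frac{C_0P_{\mathrm{ref}}\sqrt{ab}}{2\pi|V|\eta},
\]
and squaring the sixth power gives the exponent $12$.

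The uniformity claim holds because every step is linear in $u$ and uses nothing beyond the eigenvalue. I expect no real obstacle here; the main point to get right is that the odd extension of an arbitrary element of a degenerate eigenspace lands on a single lattice shell, and the explicit sine basis settles that.
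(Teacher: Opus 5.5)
Your proposal is correct and follows the paper's proof. Like the paper, you pass to the odd periodic extension on $\mathbb T_{a,b}^2$, note that $\lambda\geq\lambda_*(R,V)$ is exactly $\rho\geq\rho_*(R,V)=\rho_*(V^\sharp)$, apply Proposition~\ref{prop:explicit-toral-threshold} to $V^\sharp$, and transfer back via \eqref{eq:reflected-ratio-identity}; your extra checks (every eigenspace element extends onto the single shell $\Lambda_{\sqrt\lambda}$, $\rho\geq1$ gives the packing condition \eqref{eq:packing-frequency-condition}, and the algebra behind the exponent $12$) supply details the paper leaves implicit.
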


\begin{proof}
Let $\widetilde u$ be the odd periodic extension of $u$ to
$\mathbb T_{a,b}^2$, and write
\[
    r=\sqrt{\lambda},
    \qquad
    \rho=\frac{r}{\sqrt{\kappa_{a,b}}}.
\]
The condition
\[
    \lambda\geq\lambda_*(R,V)
\]
implies
\[
    \rho\geq\rho_*(R,V).
\]
Applying
\cref{prop:explicit-toral-threshold} to $V^\sharp$ gives
\[
    \frac{
        \|\widetilde u\|_{L^2(V^\sharp)}
    }{
        \|\widetilde u\|_{L^2(\mathbb T_{a,b}^2)}
    }
    \geq
    F_{\mathrm{HF}}(\alpha).
\]
The reflection identity
\eqref{eq:reflected-ratio-identity} then yields
\eqref{eq:rectangle-area-only-final}.
\end{proof}

\begin{remark}[Scaling consistency]
\label{rem:threshold-scaling}
The threshold has the natural Laplacian scaling.  If both the
rectangle and observation set are dilated by a factor $s>0$, then
$\alpha$ and $\chi_{\mathrm{ref}}$ remain unchanged, while
\[
    \frac{\pi^2}{ab}
    \longmapsto
    s^{-2}\frac{\pi^2}{ab}.
\]
Consequently,
\begin{equation}
    \lambda_*(sR,sV)
    =
    s^{-2}\lambda_*(R,V).
    \label{eq:threshold-scaling}
\end{equation}
\end{remark}

\begin{remark}[Nonoptimality of the closed-form threshold]
\label{rem:threshold-nonoptimal}
The threshold
\eqref{eq:lambda-star-explicit} is a sufficient explicit threshold
rather than an optimized one.  In particular, the power $12$ arises
from replacing the logarithmic factor in
\eqref{eq:epsilon-dimensionless} by the elementary power bound
\eqref{eq:log-elementary-bound}.  A substantially smaller threshold
is obtained by solving directly the implicit condition
\begin{equation}
    \varepsilon_{V^\sharp}(\sqrt{\lambda})
    \leq
    \frac{\eta(\alpha)}{2}.
    \label{eq:implicit-sharp-threshold}
\end{equation}
No optimality of the spectral threshold is claimed here.
\end{remark}

\subsection{Low-Frequency Quantitative Non-Localization}
\label{subsec:low-frequency-quantitative}

It remains to control the finite spectral regime below the explicit
threshold $\lambda_*(R,V)$. Unlike the high-frequency argument, which
exploits the geometry of lattice points on a frequency circle, the
low-frequency regime can be treated by a finite-bandwidth uncertainty
principle.

We use the following $L^2$ form of the Tur\'an--Nazarov inequality.
Nazarov's theorem~\cite{Nazarov2000Turan} implies, in the form recorded
by Tao~\cite[Lemma~13]{Tao2021ExactControl}, that there exists a
universal constant $C_{\mathrm{NT}}\geq1$ such that every
trigonometric polynomial
\[
    p(t)
    =
    \sum_{k=0}^{M-1} c_k e^{2\pi i k t}
\]
satisfies
\begin{equation}
    \|p\|_{L^2(\mathbb T)}
    \leq
    \left(
        \frac{C_{\mathrm{NT}}}{|E|}
    \right)^{M-\frac12}
    \|p\|_{L^2(E)}
    \label{eq:NT-one-dimensional}
\end{equation}
for every measurable $E\subset\mathbb T$ with $|E|>0$.

Applying this estimate successively along
the two coordinate directions gives the following finite-bandwidth
observation inequality.

\begin{lemma}[Finite-bandwidth observation on the two-torus]
\label{lem:finite-bandwidth-observation}
Let
\[
    \mathbb T^2
    =
    \mathbb R^2/\mathbb Z^2
\]
be equipped with normalized Haar measure, and let
$W\subset\mathbb T^2$ be measurable with
\[
    |W|=\alpha\in(0,1).
\]
Suppose that
\begin{equation}
    p(x,y)
    =
    \sum_{\substack{|m|\leq N_x\\|n|\leq N_y}}
    c_{m,n}e^{2\pi i(mx+ny)}.
    \label{eq:finite-bandwidth-polynomial}
\end{equation}
Then
\begin{equation}
{
    \|p\|_{L^2(W)}
    \geq
    \left(
        \frac{\alpha}{2C_{\mathrm{NT}}}
    \right)^{
        2N_x+2N_y+1
    }
    \|p\|_{L^2(\mathbb T^2)}.
    }
    \label{eq:finite-bandwidth-observation}
\end{equation}
\end{lemma}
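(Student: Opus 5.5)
The plan is to prove the estimate by Fubini slicing together with two applications of the one-dimensional Tur\'an--Nazarov inequality \eqref{eq:NT-one-dimensional}: first in $x$ on each horizontal slice, then in $y$ on a set of good heights. Throughout, $C_{\mathrm{NT}}\geq1$ and $\alpha<1$, so $K:=2C_{\mathrm{NT}}/\alpha\geq1$. This lets us raise exponents freely when collecting constants.

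\emph{Step 1 (good slices).} For $y\in\mathbb T$, let $W_y:=\{x:(x,y)\in W\}$. By Fubini, $y\mapsto|W_y|$ is measurable and $\int_{\mathbb T}|W_y|\,dy=\alpha$. Set
\[
G:=\{y:|W_y|\geq\alpha/2\}.
\]
Since $|W_y|\leq1$, we get $\alpha\leq|G|+\alpha/2$, hence $|G|\geq\alpha/2$.

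\emph{Step 2 (estimate in $x$).} Fix $y\in G$. The function $x\mapsto e^{2\pi iN_xx}p(x,y)$ is a trigonometric polynomial with frequencies in $\{0,\dots,2N_x\}$, so $M=2N_x+1$. Its modulus equals $|p(x,y)|$. Applying \eqref{eq:NT-one-dimensional} with $E=W_y$ and squaring gives
\[
\int_{\mathbb T}|p(x,y)|^2\,dx\leq K^{4N_x+1}\int_{W_y}|p(x,y)|^2\,dx .
\]

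\emph{Step 3 (estimate in $y$).} Write $p(x,y)=\sum_m q_m(y)e^{2\pi imx}$ with $q_m(y)=\sum_{|n|\leq N_y}c_{m,n}e^{2\pi iny}$. By Parseval in $x$, $\int_{\mathbb T}|p(x,y)|^2dx=\sum_m|q_m(y)|^2$. Each $q_m$, after the same frequency shift, has bandwidth $M=2N_y+1$. Applying \eqref{eq:NT-one-dimensional} with $E=G$ and $|G|\geq\alpha/2$, and using monotonicity of the constant in $|E|$, gives
\[
\int_{\mathbb T}|q_m|^2\,dy\leq K^{4N_y+1}\int_G|q_m|^2\,dy .
\]
Summing over $m$ yields
\[
\|p\|_{L^2(\mathbb T^2)}^2\leq K^{4N_y+1}\int_G\int_{\mathbb T}|p(x,y)|^2\,dx\,dy .
\]

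\emph{Step 4 (combine).} Insert Step 2 into the inner integral for $y\in G$. This gives
\[
\|p\|_{L^2(\mathbb T^2)}^2\leq K^{4N_x+4N_y+2}\int_G\int_{W_y}|p|^2\,dx\,dy\leq K^{4N_x+4N_y+2}\|p\|_{L^2(W)}^2 .
\]
Taking square roots produces exactly the exponent $2N_x+2N_y+1$ in \eqref{eq:finite-bandwidth-observation}.

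The main point requiring care is the choice of the good set $G$ in Step 1. Tur\'an--Nazarov must be applied on slices of controlled measure, and the threshold $\alpha/2$ is what balances $|G|\geq\alpha/2$ against $|W_y|\geq\alpha/2$. This balance produces the common base $2C_{\mathrm{NT}}/\alpha$. The remaining checks are routine: measurability of $G$ follows from Fubini, and the frequency shifts do not change moduli.
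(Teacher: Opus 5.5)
Your proposal is correct and follows essentially the same route as the paper. The paper uses the same good-slice set $\{y:|W_y|\geq\alpha/2\}$, applies Tur\'an--Nazarov in $x$ on good slices and then in $y$ on the good set, and does the same exponent bookkeeping. The only cosmetic difference is in the $y$-step: you apply the inequality to each $x$-Fourier coefficient $q_m$ and sum via Parseval, whereas the paper applies it directly to $p(x,\cdot)$ for each fixed $x$ and integrates in $x$; both are valid.
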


\begin{proof}
For $y\in\mathbb T$, define the horizontal section
\[
    W_y
    :=
    \{x\in\mathbb T:(x,y)\in W\},
\]
and set
\[
    f(y):=|W_y|.
\]
By Fubini's theorem,
\[
    \int_{\mathbb T}f(y)\,dy=\alpha.
\]
Define
\begin{equation}
    Y
    :=
    \left\{
        y\in\mathbb T:
        |W_y|\geq\frac{\alpha}{2}
    \right\}.
    \label{eq:good-horizontal-slices}
\end{equation}
Since $0\leq f\leq1$,
\[
    \alpha
    =
    \int_Yf(y)\,dy
    +
    \int_{\mathbb T\setminus Y}f(y)\,dy
    \leq
    |Y|
    +
    \frac{\alpha}{2}(1-|Y|).
\]
Therefore
\begin{equation}
    |Y|
    \geq
    \frac{\alpha}{2-\alpha}
    \geq
    \frac{\alpha}{2}.
    \label{eq:Y-lower-measure}
\end{equation}
Fix $y\in Y$. As a function of $x$, the polynomial
$p(\cdot,y)$ contains at most
\[
    M_x:=2N_x+1
\]
distinct characters. 
Multiplication by the unimodular factor
$e^{2\pi iN_xx}$ shifts these frequencies into
$\{0,\ldots,2N_x\}$ without changing either of the relevant
$L^2$ norms. Thus \eqref{eq:NT-one-dimensional} applies with
\[
    M_x=2N_x+1.
\]
Applying
\eqref{eq:NT-one-dimensional} to the measurable set $W_y$ gives
\begin{align}
    \|p(\cdot,y)\|_{L^2(\mathbb T)}
    &\leq
    \left(
        \frac{C_{\mathrm{NT}}}{|W_y|}
    \right)^{M_x-\frac12}
    \|p(\cdot,y)\|_{L^2(W_y)}
    \notag\\
    &\leq
    \left(
        \frac{2C_{\mathrm{NT}}}{\alpha}
    \right)^{M_x-\frac12}
    \|p(\cdot,y)\|_{L^2(W_y)}.
    \label{eq:first-fiber-NT}
\end{align}
Squaring and integrating over $y\in Y$ yields
\begin{equation}
    \|p\|_{L^2(\mathbb T\times Y)}
    \leq
    \left(
        \frac{2C_{\mathrm{NT}}}{\alpha}
    \right)^{M_x-\frac12}
    \|p\|_{L^2(W)}.
    \label{eq:first-fiber-integrated}
\end{equation}
Next, fix $x\in\mathbb T$. As a function of $y$, the polynomial
$p(x,\cdot)$ contains at most
\[
    M_y:=2N_y+1
\]
distinct characters. Applying
\eqref{eq:NT-one-dimensional} to $Y$, together with
\eqref{eq:Y-lower-measure}, gives
\[
    \|p(x,\cdot)\|_{L^2(\mathbb T)}
    \leq
    \left(
        \frac{2C_{\mathrm{NT}}}{\alpha}
    \right)^{M_y-\frac12}
    \|p(x,\cdot)\|_{L^2(Y)}.
\]
Integrating in $x$, we obtain
\begin{equation}
    \|p\|_{L^2(\mathbb T^2)}
    \leq
    \left(
        \frac{2C_{\mathrm{NT}}}{\alpha}
    \right)^{M_y-\frac12}
    \|p\|_{L^2(\mathbb T\times Y)}.
    \label{eq:second-fiber-integrated}
\end{equation}
Combining
\eqref{eq:first-fiber-integrated} and
\eqref{eq:second-fiber-integrated} yields
\[
    \|p\|_{L^2(\mathbb T^2)}
    \leq
    \left(
        \frac{2C_{\mathrm{NT}}}{\alpha}
    \right)^{
        M_x+M_y-1
    }
    \|p\|_{L^2(W)}.
\]
Since
\[
    M_x+M_y-1
    =
    2N_x+2N_y+1,
\]
the result follows.
\end{proof}

The preceding lemma immediately yields a quantitative estimate for
all rectangular-torus eigenfunctions below a prescribed spectral
threshold.

\begin{proposition}[Explicit low-frequency bound on the rectangular torus]
\label{prop:explicit-low-frequency-torus}
Let $W\subset\mathbb T_{a,b}^2$ be measurable with relative area
\[
    \alpha
    =
    \frac{|W|}{4ab},
\]
and let $\Lambda_*>0$. Define
\begin{equation}
    N_x(\Lambda_*)
    :=
    \left\lfloor
        \frac{a\sqrt{\Lambda_*}}{\pi}
    \right\rfloor,
    \qquad
    N_y(\Lambda_*)
    :=
    \left\lfloor
        \frac{b\sqrt{\Lambda_*}}{\pi}
    \right\rfloor.
    \label{eq:low-frequency-Nxy}
\end{equation}
Then every toral eigenfunction
\[
    -\Delta U=\lambda U,
    \qquad
    0\leq\lambda<\Lambda_*,
\]
satisfies
\begin{equation}
{
    \frac{\|U\|_{L^2(W)}}
         {\|U\|_{L^2(\mathbb T_{a,b}^2)}}
    \geq
    F_{\mathrm{LF}}(\alpha,\Lambda_*;a,b),
    }
    \label{eq:low-frequency-torus-final}
\end{equation}
where
\begin{equation}
    F_{\mathrm{LF}}(\alpha,\Lambda_*;a,b)
    :=
    \left(
        \frac{\alpha}{2C_{\mathrm{NT}}}
    \right)^{
        2N_x(\Lambda_*)
        +
        2N_y(\Lambda_*)
        +1
    }.
    \label{eq:FLF-general}
\end{equation}
\end{proposition}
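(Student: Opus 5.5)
The proof is a rescaling of the rectangular torus to the standard torus, followed by an application of Lemma~\ref{lem:finite-bandwidth-observation}.

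\emph{Step 1: Fourier support of $U$.} By \eqref{eq:quant-toral-eigenfunction}, a toral eigenfunction with $-\Delta U=\lambda U$ is a finite sum of characters $e^{i\xi\cdot x}$ with $\xi\in\Lambda_{a,b}^*$ and $|\xi|^2=\lambda$. Write $\xi=(\pi m/a,\pi n/b)$ with $(m,n)\in\mathbb Z^2$. The condition $\lambda<\Lambda_*$ gives $(\pi m/a)^2\le|\xi|^2<\Lambda_*$. Hence $|m|<a\sqrt{\Lambda_*}/\pi$, so $|m|\le N_x(\Lambda_*)$ since $m$ is an integer. The same argument gives $|n|\le N_y(\Lambda_*)$. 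The floor is harmless, and in fact gives a little room, because the inequality is strict.

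\emph{Step 2: rescaling.} Use the linear map $\Phi(s,t)=(2as,2bt)$, which identifies $\mathbb T^2=\mathbb R^2/\mathbb Z^2$ with $\mathbb T_{a,b}^2$ and pushes normalized Haar measure to $dx/(4ab)$. Set $p:=U\circ\Phi$. Then
\[
    e^{i(\pi m/a)(2as)+i(\pi n/b)(2bt)}=e^{2\pi i(ms+nt)},
\]
so $p$ has exactly the form \eqref{eq:finite-bandwidth-polynomial} with $N_x=N_x(\Lambda_*)$ and $N_y=N_y(\Lambda_*)$. Set $\widetilde W:=\Phi^{-1}(W)$. Its normalized measure is $|W|/(4ab)=\alpha$. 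Both $L^2$ norms scale by the same factor $(4ab)^{1/2}$, so
\[
    \frac{\|U\|_{L^2(W)}}{\|U\|_{L^2(\mathbb T_{a,b}^2)}}
    =\frac{\|p\|_{L^2(\widetilde W)}}{\|p\|_{L^2(\mathbb T^2)}}.
\]

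\emph{Step 3: apply the lemma.} Lemma~\ref{lem:finite-bandwidth-observation} applied to $p$ and $\widetilde W$ now gives \eqref{eq:low-frequency-torus-final} directly.

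\emph{Edge cases.} The lemma assumes $\alpha\in(0,1)$. If $\alpha=1$, the set $W$ has full measure and the ratio equals $1$, which is at least $F_{\mathrm{LF}}$ because $C_{\mathrm{NT}}\ge1$. If $\alpha=0$, we read the proposition with $|W|>0$ as implicitly assumed. The case $U=0$ is excluded, as it is for eigenfunctions.

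\emph{Expected difficulty.} The only step needing care is the bookkeeping of the frequency box in Step~1, where one must check that the floor definitions of $N_x$ and $N_y$ cover every lattice frequency with $|\xi|^2<\Lambda_*$. Everything else reduces to the previous lemma. The estimate holds for every $U$ in the eigenspace, with no diagonalization, because the lemma applies to arbitrary coefficients $c_{m,n}$.
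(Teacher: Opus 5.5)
Your proposal is correct and matches the paper's proof: rescale by $(X,Y)\mapsto(2aX,2bY)$ so that both the relative area and the $L^2$ ratio are preserved, use $\lambda<\Lambda_*$ to confine the integer frequencies to $|m|\le N_x(\Lambda_*)$ and $|n|\le N_y(\Lambda_*)$, and invoke Lemma~\ref{lem:finite-bandwidth-observation}. Your remark on the edge cases ($\alpha=1$, and the implicit assumption $|W|>0$) is a small addition that the paper omits, since the lemma is stated only for $\alpha\in(0,1)$.
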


\begin{proof}

Define
\[
    P(X,Y)
    :=
    U(2aX,2bY),
    \qquad
    (X,Y)\in\mathbb T^2,
\]
and let
\[
    W'
    :=
    \left\{
        (X,Y)\in\mathbb T^2:
        (2aX,2bY)\in W
    \right\}.
\]
Then
\[
    |W'|
    =
    \frac{|W|}{4ab}
    =
    \alpha.
\]
Moreover, by the change of variables
\[
    x=2aX,
    \qquad
    y=2bY,
\]
we have
\begin{equation}
    \frac{\|P\|_{L^2(W')}}
         {\|P\|_{L^2(\mathbb T^2)}}
    =
    \frac{\|U\|_{L^2(W)}}
         {\|U\|_{L^2(\mathbb T_{a,b}^2)}}.
    \label{eq:low-frequency-rescaling-ratio}
\end{equation}
Every Fourier mode of $U$ becomes
\[
    e^{i(\pi mx/a+\pi ny/b)}
    =
    e^{2\pi i(mX+nY)}.
\]
If its eigenvalue satisfies $\lambda<\Lambda_*$, then
\[
    \frac{\pi^2m^2}{a^2}
    +
    \frac{\pi^2n^2}{b^2}
    <
    \Lambda_*,
\]
and therefore
\[
    |m|
    \leq
    N_x(\Lambda_*),
    \qquad
    |n|
    \leq
    N_y(\Lambda_*).
\]
The rescaled eigenfunction thus satisfies
\cref{lem:finite-bandwidth-observation}. Since the rescaling
preserves both the relative observation measure and the ratio of the
corresponding $L^2$ norms, the conclusion follows.
\end{proof}

\subsection{Full-Spectrum Quantitative Non-Localization}
\label{subsec:full-spectrum-quantitative}

The preceding estimates control complementary spectral regimes.
The high-frequency estimate gives an area-only lower bound above the
explicit threshold $\lambda_*(R,V)$, whereas the finite-bandwidth
estimate controls all eigenspaces below this threshold. Combining
the two gives an explicit quantitative lower bound over the entire
spectrum.


\begin{theorem}[Full-spectrum quantitative non-localization on rectangles]
\label{thm:full-spectrum-quantitative-rectangle}
Let
\[
    R=(0,a)\times(0,b),
\]
and let $V\subset R$ be measurable such that its fourfold reflected
extension $V^\sharp\subset\mathbb T_{a,b}^2$ has finite perimeter.
Assume
\[
    0<|V|<|R|,
\]
and set
\[
    \alpha=\frac{|V|}{|R|}.
\]
Let $\lambda_*(R,V)$ be defined by
\eqref{eq:lambda-star-explicit}, and set
\begin{equation}
    N_x^*
    :=
    \left\lfloor
        \frac{a\sqrt{\lambda_*(R,V)}}{\pi}
    \right\rfloor,
    \qquad
    N_y^*
    :=
    \left\lfloor
        \frac{b\sqrt{\lambda_*(R,V)}}{\pi}
    \right\rfloor.
\end{equation}
Define
\begin{equation}
    F_{\mathrm{LF}}(R,V)
    :=
    \left(
        \frac{\alpha}{2C_{\mathrm{NT}}}
    \right)^{2N_x^*+2N_y^*+1}
\end{equation}
and
\begin{equation}
    F_{\mathrm{HF}}(\alpha)
    :=
    \left[
        \frac{\alpha}{2}
        \left(
            1-
            \frac{\sin(\pi\alpha)}{\pi\alpha}
        \right)
    \right]^{1/2}.
\end{equation}
Finally, set
\begin{equation}
    F_R(V)
    :=
    \min
    \left\{
        F_{\mathrm{LF}}(R,V),
        F_{\mathrm{HF}}(\alpha)
    \right\}.
\end{equation}
Then every Dirichlet Laplacian
eigenfunction $u$ on $R$ satisfies
\begin{equation}
{
    \frac{\|u\|_{L^2(V)}}
         {\|u\|_{L^2(R)}}
    \geq
    F_R(V)>0.
    }
    \label{eq:full-spectrum-rectangle-final}
\end{equation}
Equivalently,
\begin{equation}
{
    C_2(V;R)
    \geq
    F_R(V)>0.
    }
    \label{eq:C2-full-spectrum-quantitative}
\end{equation}
The estimate is uniform in the eigenvalue, its multiplicity, and the
choice of eigenfunction within the corresponding eigenspace.
\end{theorem}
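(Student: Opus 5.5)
The argument splits the spectrum at the explicit threshold $\lambda_*(R,V)$ of \cref{thm:explicit-high-frequency-rectangle}, and treats every Dirichlet eigenvalue $\lambda$ of $R$ with one of two earlier results.

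\textbf{High frequencies.} If $\lambda\geq\lambda_*(R,V)$, then \cref{thm:explicit-high-frequency-rectangle} applies directly to every $u\in E_\lambda(R)$. It gives the ratio bound $F_{\mathrm{HF}}(\alpha)$, which is at least $F_R(V)$.

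\textbf{Low frequencies.} Suppose $\lambda<\lambda_*(R,V)$ and $u\in E_\lambda(R)\setminus\{0\}$. Let $\widetilde u$ be the odd $2a,2b$-periodic extension of $u$.
\begin{itemize}
\item First, check that $\widetilde u$ is a toral eigenfunction, $-\Delta\widetilde u=\lambda\widetilde u$ on $\mathbb T_{a,b}^2$. Dirichlet eigenfunctions on $R$ are finite combinations of $\sin(\pi m x/a)\sin(\pi n y/b)$ with $\pi^2(m^2/a^2+n^2/b^2)=\lambda$. Each such product extends to the toral function given by the same formula, which is a combination of $e^{i\xi\cdot x}$ with $\xi\in\Lambda_{a,b}^*$ and $|\xi|^2=\lambda$.
\item Next, apply \cref{prop:explicit-low-frequency-torus} with $W=V^\sharp$ and $\Lambda_*=\lambda_*(R,V)$. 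The relative area of $V^\sharp$ is $\alpha$ by \eqref{eq:reflected-area-fraction}. Since $N_x(\lambda_*)=N_x^*$ and $N_y(\lambda_*)=N_y^*$, this gives
\[
\frac{\|\widetilde u\|_{L^2(V^\sharp)}}{\|\widetilde u\|_{L^2(\mathbb T_{a,b}^2)}}\geq F_{\mathrm{LF}}(R,V).
\]
\item Finally, the reflection identity \eqref{eq:reflected-ratio-identity} transfers this bound back to $u$ on $R$.
\end{itemize}
The finite-perimeter hypothesis is not needed in this regime; the proposition only uses measurability and the relative area.

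\textbf{Combining the regimes.} Together the two cases bound the ratio below by $F_R(V)$ for every eigenvalue and every nonzero vector of every eigenspace. Hence $C_2(V;R)\geq F_R(V)$, uniformly in the eigenvalue, its multiplicity, and the choice of eigenfunction.

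\textbf{Positivity of $F_R(V)$.}
\begin{itemize}
\item $F_{\mathrm{HF}}(\alpha)>0$ because $\alpha\in(0,1)$ and $\eta(\alpha)>0$ by \cref{lem:sharp-fourier-gap}.
\item $F_{\mathrm{LF}}(R,V)>0$ because it is a finite power of the positive number $\alpha/(2C_{\mathrm{NT}})$. The exponent $2N_x^*+2N_y^*+1$ is finite, since $\lambda_*(R,V)<\infty$ once $P_{\mathrm{ref}}(V)<\infty$ and $\eta(\alpha)>0$.
\end{itemize}

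\textbf{Main obstacle.} The only point needing care is identifying the odd extension as a genuine toral eigenfunction of the same eigenvalue. This requires that its spectrum lie in the shell $|\xi|^2=\lambda$ of $\Lambda_{a,b}^*$; that fact is what makes the finite-bandwidth count in \cref{prop:explicit-low-frequency-torus} and the reflection identity legitimate. The same observation is already used implicitly in the proof of \cref{thm:explicit-high-frequency-rectangle}, so I expect it to be routine.
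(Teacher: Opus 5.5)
Your proposal is correct and follows the paper's proof essentially verbatim. You split at $\lambda_*(R,V)$, apply \cref{thm:explicit-high-frequency-rectangle} above the threshold, apply \cref{prop:explicit-low-frequency-torus} with $W=V^\sharp$ and $\Lambda_*=\lambda_*(R,V)$ below it, and transfer back through \eqref{eq:reflected-ratio-identity}. Your additional checks are points the paper leaves implicit in this proof, and you handle them correctly: that the odd periodic extension is a toral eigenfunction with frequencies on the shell $|\xi|^2=\lambda$, and that $F_R(V)>0$.
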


\begin{proof}
Let
\[
    -\Delta_Ru=\lambda u.
\]
If
\[
    \lambda\geq\lambda_*(R,V),
\]
then
Theorem \ref{thm:explicit-high-frequency-rectangle} gives
\[
    \frac{\|u\|_{L^2(V)}}
         {\|u\|_{L^2(R)}}
    \geq
    F_{\mathrm{HF}}(\alpha)
    \geq
    F_R(V).
\]
Suppose instead that
\[
    \lambda<\lambda_*(R,V).
\]
Let $\widetilde u$ be the odd periodic extension of $u$ to
$\mathbb T_{a,b}^2$, and let $V^\sharp$ be the fourfold reflected
extension of $V$. By
\eqref{eq:reflected-area-fraction},
\[
    \frac{|V^\sharp|}
         {|\mathbb T_{a,b}^2|}
    =
    \alpha.
\]
Applying
\cref{prop:explicit-low-frequency-torus} with
\[
    W=V^\sharp,
    \qquad
    \Lambda_*=\lambda_*(R,V),
\]
gives
\[
    \frac{
        \|\widetilde u\|_{L^2(V^\sharp)}
    }{
        \|\widetilde u\|_{L^2(\mathbb T_{a,b}^2)}
    }
    \geq
    F_{\mathrm{LF}}(R,V).
\]
Using the reflection identity
\eqref{eq:reflected-ratio-identity}, we obtain
\[
    \frac{\|u\|_{L^2(V)}}
         {\|u\|_{L^2(R)}}
    \geq
    F_{\mathrm{LF}}(R,V)
    \geq
    F_R(V).
\]
This proves the result.
\end{proof}


\begin{remark}[Two quantitative regimes]
\label{rem:two-quantitative-regimes}
The two constants entering
\cref{thm:full-spectrum-quantitative-rectangle} play different
roles. The low-frequency constant
\[
    F_{\mathrm{LF}}(R,V)
\]
is a finite-bandwidth completion and is not intended to be sharp.
Its dependence on the geometry of $V$ enters through the explicit
threshold $\lambda_*(R,V)$.

By contrast, above this threshold the lower bound becomes
\[
    F_{\mathrm{HF}}(\alpha)
    =
    \left[
        \frac{\alpha}{2}
        \left(
            1-
            \frac{\sin(\pi\alpha)}
                 {\pi\alpha}
        \right)
    \right]^{1/2},
\]
which depends only on the relative area
\[
    \alpha=\frac{|V|}{|R|}.
\]
Thus the quantitative theory separates two effects: the geometry of
the observation set determines when the high-frequency regime becomes
effective, whereas the non-localization constant within that regime
is universal among finite-perimeter observation sets having the same
area fraction.
\end{remark}

\section{Uniform $L^2$ Non-Localization on Rectangles and Equilateral Triangles}
\label{sec:uniform-L2-nonlocalization}
Let $\Omega\subset\mathbb{R}^2$ be a bounded domain, and let
$-\Delta_\Omega$ denote the Dirichlet Laplacian on $\Omega$.
Given a measurable set $V\subset\Omega$ with $|V|>0$, 
$C_2(V;\Omega)>0$ means that every Laplacian eigenfunction
carries a uniformly positive proportion of its $L^2$ mass in $V$,
uniformly with respect to both the eigenvalue and the choice of
eigenfunction inside a possibly degenerate eigenspace.

Nguyen and Grebenkov \cite{GrebenkovNguyen2013} established the
positivity of the analogous quantity for separated eigenfunctions on
rectangle-like domains with simple spectrum. They also observed that,
in the presence of spectral degeneracies, the corresponding problem
for arbitrary linear combinations is substantially more delicate; in
particular, the unit square was left open in their discussion.
We show below that the $L^2$ case follows from Schr\"odinger
observability.

\subsection{A General Observation}

In this section, we first present a simple consequence of Schr\"odinger observability.
\begin{lemma}[Observability implies uniform eigenfunction mass]
\label{lem:obs-to-eigen}
Let $\Omega$ be a bounded domain and let $V\subset\Omega$ be a measurable
subset. Assume that there exist $T>0$ and $C_{\mathrm{obs}}>0$ such that
the Schr\"odinger observability estimate
\begin{equation}
\|f\|_{L^2(\Omega)}^2
\leq
C_{\mathrm{obs}}
\int_0^T
\int_V
\left|e^{it\Delta_\Omega}f(x)\right|^2
\,dx\,dt
\label{eq:obs-general}
\end{equation}
holds for every $f\in L^2(\Omega)$.
Then, every Laplacian eigenfunction $u\in E_\lambda(\Omega)$ satisfies
\begin{equation}
\|u\|_{L^2(V)}
\geq
(TC_{\mathrm{obs}})^{-1/2}
\|u\|_{L^2(\Omega)}.
\label{eq:eigen-mass-general}
\end{equation}
In particular,
\[
C_2(V;\Omega)>0.
\]
\end{lemma}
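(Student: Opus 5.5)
The plan is to plug an eigenfunction directly into the observability inequality \eqref{eq:obs-general} and use the fact that the Schr\"odinger flow acts on it only by a phase. Fix $\lambda\in\sigma(-\Delta_\Omega)$ and $u\in E_\lambda(\Omega)\setminus\{0\}$; this $u$ may be an arbitrary linear combination inside a degenerate eigenspace.

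The first step is to verify that
\[
    e^{it\Delta_\Omega}u=e^{-it\lambda}u
    \qquad\text{for all } t\in\mathbb R .
\]
This follows from the spectral theorem for the nonnegative self-adjoint operator $-\Delta_\Omega$. Since $u\in\ker(-\Delta_\Omega-\lambda I)$, the spectral measure of $u$ is a point mass at $\lambda$. Hence $g(-\Delta_\Omega)u=g(\lambda)u$ for every bounded Borel function $g$, and we apply this with $g(s)=e^{-its}$. Alternatively, $w(t):=e^{-it\lambda}u$ solves $i\partial_tw=-\Delta_\Omega w$ with $w(0)=u$ and stays in $D(-\Delta_\Omega)$, so uniqueness for the unitary group gives the identity.

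The second step is pointwise: for almost every $x$ and every $t$,
\[
    |e^{it\Delta_\Omega}u(x)|^2=|u(x)|^2 .
\]
Taking $f=u$ in \eqref{eq:obs-general}, the time integral therefore becomes trivial:
\[
    \|u\|_{L^2(\Omega)}^2
    \le C_{\mathrm{obs}}\int_0^T\!\!\int_V|u|^2\,dx\,dt
    =TC_{\mathrm{obs}}\,\|u\|_{L^2(V)}^2 .
\]
Taking square roots gives \eqref{eq:eigen-mass-general}. Because $T$ and $C_{\mathrm{obs}}$ do not depend on $u$ or $\lambda$, dividing by $\|u\|_{L^2(\Omega)}$ and taking both infima in the definition of $C_2(V;\Omega)$ yields
\[
    C_2(V;\Omega)\ge (TC_{\mathrm{obs}})^{-1/2}>0 .
\]

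None of these steps is a serious obstacle. The only point that needs care is the justification of the phase identity through functional calculus. It is this identity that makes the argument insensitive to spectral multiplicity: no choice of basis in $E_\lambda(\Omega)$ is ever made, so cancellations between modes within an eigenspace play no role.
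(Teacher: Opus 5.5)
Your proposal is correct and follows the paper's proof step for step: the phase identity $e^{it\Delta_\Omega}u=e^{-it\lambda}u$, the resulting pointwise equality $|e^{it\Delta_\Omega}u|=|u|$, the substitution $f=u$ in the observability estimate giving $\|u\|_{L^2(\Omega)}^2\le TC_{\mathrm{obs}}\|u\|_{L^2(V)}^2$, and then taking both infima. The only difference is that you justify the phase identity explicitly via the spectral theorem (or uniqueness for the unitary group), whereas the paper simply asserts it from $-\Delta_\Omega u=\lambda u$.
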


\begin{proof}
Let
\[
    u\in E_\lambda(\Omega)\setminus\{0\}.
\]
Since
\[
    -\Delta_\Omega u=\lambda u,
\]
the Schr\"odinger evolution preserves the spatial profile of $u$ up
to a unimodular phase:
\begin{equation}
    e^{it\Delta_\Omega}u
    =
    e^{-it\lambda}u.
    \label{eq:eigen-phase}
\end{equation}
Hence
\[
    \left|e^{it\Delta_\Omega}u(x)\right|
    =
    |u(x)|
\]
for all $t$ and almost every $x\in\Omega$.
Applying the observability estimate in the inequality \eqref{eq:obs-general} with
$f=u$ therefore gives
\begin{align}
    \|u\|_{L^2(\Omega)}^2
    &\leq
    C_{\mathrm{obs}}
    \int_0^T\int_V
    |u(x)|^2\,dx\,dt
    \nonumber\\
    &=
    T C_{\mathrm{obs}}
    \|u\|_{L^2(V)}^2.
\end{align}
Therefore,
\begin{equation}
    \|u\|_{L^2(V)}
    \geq
    (T C_{\mathrm{obs}})^{-1/2}
    \|u\|_{L^2(\Omega)},
\end{equation}
which proves the inequality \eqref{eq:eigen-mass-general}.

Since the constant $(T C_{\mathrm{obs}})^{-1/2}$ is independent of
$\lambda$, its multiplicity, and the choice of
$u\in E_\lambda(\Omega)$, taking the infimum over all eigenvalues and
all nonzero eigenfunctions yields
\[
    C_2(V;\Omega)
    \geq
    (T C_{\mathrm{obs}})^{-1/2}
    >0.
\]
\end{proof}
\subsection{Uniform $L^2$ Non-Localization on Rectangles}
Let
\[
R=(0,a)\times(0,b),
\qquad a,b>0.
\]
The standard Dirichlet eigenfunctions are
\begin{equation}
\phi_{m,n}(x,y)
=
\sin\left(\frac{m\pi x}{a}\right)
\sin\left(\frac{n\pi y}{b}\right),
\qquad
m,n\in\mathbb{N},
\label{eq:rectangle-eigenfunctions}
\end{equation}
with the corresponding eigenvalues
\begin{equation}
\lambda_{m,n}
=
\pi^2
\left(
\frac{m^2}{a^2}
+
\frac{n^2}{b^2}
\right).
\label{eq:rectangle-eigenvalues}
\end{equation}
When $\lambda$ is degenerate, an arbitrary element of
$E_\lambda(R)$ is a linear combination of different modes
in Eq. \eqref{eq:rectangle-eigenfunctions} having eigenvalue $\lambda$.

\begin{theorem}[Uniform $L^2$ non-localization on rectangles]
\label{thm:rectangle-nonlocalization}
Let $R=(0,a)\times(0,b)$ and let $V\subset R$ be measurable with
$|V|>0$. Then, there exists a constant $c_{R,V}>0$ such that
\begin{equation}
\|u\|_{L^2(V)}
\geq
c_{R,V}\|u\|_{L^2(R)}
\label{eq:rectangle-main-bound}
\end{equation}
for every Dirichlet Laplacian eigenfunction $u$ on $R$.
The constant $c_{R,V}$ is independent of the eigenvalue, its
multiplicity, and the choice of $u$ in the corresponding eigenspace.
Consequently,
\[
C_2(V;R)>0.
\]
\end{theorem}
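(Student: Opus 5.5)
The plan is to combine Lemma~\ref{lem:obs-to-eigen} with the Burq--Zworski observability estimate on the rectangular torus $\mathbb T_{a,b}^2$, transferred to $R$ by odd reflection. This route needs no regularity of $V$: Theorem~\ref{thm:full-spectrum-quantitative-rectangle} assumes finite reflected perimeter, so it cannot be invoked for a general measurable $V$.

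\emph{Step 1: reflection.} Let $V^\sharp\subset\mathbb T_{a,b}^2$ be the fourfold reflected extension of $V$, so that $|V^\sharp|=4|V|>0$. For $f\in L^2(R)$, let $\widetilde f$ be its odd periodic extension. The map $f\mapsto\widetilde f$ is an isometry up to the factor $2$, that is $\|\widetilde f\|_{L^2(\mathbb T)}^2=4\|f\|_{L^2(R)}^2$. Its range is the closed subspace of functions that are odd in each variable, and this subspace is invariant under $e^{it\Delta_{\mathbb T}}$.

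The odd sine expansions $\phi_{m,n}$ from \eqref{eq:rectangle-eigenfunctions} extend to toral eigenfunctions with the same eigenvalue. Hence
\[
\widetilde{e^{it\Delta_R}f}=e^{it\Delta_{\mathbb T}}\widetilde f .
\]
Moreover, $|\widetilde g|^2$ is even under each reflection. Therefore $\int_{V^\sharp}|\widetilde g|^2=4\int_V|g|^2$ for every $g\in L^2(R)$.

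\emph{Step 2: torus observability.} By \cite[Theorem~1.3 and Remark~1]{BurqZworski2019} with $W=\mathbf 1_{V^\sharp}$ and some fixed $T>0$, there is $C>0$ such that
\[
\|F\|_{L^2(\mathbb T)}^2\le C\int_0^T\int_{V^\sharp}|e^{it\Delta_{\mathbb T}}F|^2\,dx\,dt
\]
for all $F\in L^2(\mathbb T_{a,b}^2)$. Apply this with $F=\widetilde f$ and use Step~1 to rewrite both sides. The factors $4$ cancel, which gives \eqref{eq:obs-general} on $R$ with $C_{\mathrm{obs}}=C$.

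\emph{Step 3: conclusion.} Lemma~\ref{lem:obs-to-eigen} now gives $c_{R,V}=(TC)^{-1/2}$. This constant is independent of $\lambda$, of its multiplicity, and of the choice of $u$ in $E_\lambda(R)$, so $C_2(V;R)>0$.

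The main obstacle is Step~1, specifically identifying the Dirichlet propagator with the restricted torus propagator at the operator level rather than only mode by mode. I would handle it by noting two facts. First, $\{\phi_{m,n}\}$ is an orthogonal basis of $L^2(R)$ that diagonalizes $-\Delta_R$. Second, the $\widetilde\phi_{m,n}$ are toral eigenfunctions with the same eigenvalues. Both propagators are then given by the same multipliers, and the identity extends by density and unitarity.

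A second point to check is that the Burq--Zworski result covers rectangular tori with arbitrary side lengths $2a,2b$ and arbitrary $T>0$. The paper's description of it in the related-work section indicates that it does.
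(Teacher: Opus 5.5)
Your proof is correct. It uses the same three ingredients as the paper (odd periodic reflection, Burq--Zworski observability on $\mathbb T_{a,b}^2$, and Lemma~\ref{lem:obs-to-eigen}), but applies them in a different order.

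\textbf{The paper's route.} The paper never transfers the propagator. It fixes $u\in E_\lambda(R)$ and notes that the odd periodic extension $\widetilde u$ is a finite sum of sine modes, hence a toral eigenfunction with the same eigenvalue. It then applies Lemma~\ref{lem:obs-to-eigen} directly on the torus, with the single copy $V\subset R\subset\mathbb T_{a,b}^2$ as observation set. The identities $\|\widetilde u\|_{L^2(\mathbb T_{a,b}^2)}=2\|u\|_{L^2(R)}$ and $\|\widetilde u\|_{L^2(V)}=\|u\|_{L^2(V)}$ then give $c_{R,V}=2(\tau_0C_{V,\tau_0})^{-1/2}$. In this ordering, the step you flagged as the main obstacle never arises: the flow acts on an eigenfunction only by a phase on both sides, so $e^{it\Delta_R}$ never has to be identified with the toral flow at the operator level.

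\textbf{What your route costs and buys.} You pay for an extra intertwining argument, which you handle correctly via the complete sine basis and density. In return, Steps~1--2 establish the Dirichlet observability inequality \eqref{eq:obs-general} on $R$ itself, for every $f\in L^2(R)$. That is strictly more than the eigenfunction bound. It is exactly the hypothesis needed to apply Proposition~\ref{prop:defect-estimate} and Corollary~\ref{cor:polygon-spectral-cluster} with $A=-\Delta_R$. The paper treats that estimate as ``available'' in Section~\ref{sec:quasimodes} rather than deriving it from the torus estimate.

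Whether you observe from $V^\sharp$ or from the single copy $V$ only changes the constant.
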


\begin{proof}
Let
\[
    \mathbb{T}_{a,b}^2
    :=
    \mathbb{R}^2/
    \left(2a\mathbb{Z}\times2b\mathbb{Z}\right)
\]
be the rectangular flat torus associated with
$R=(0,a)\times(0,b)$. Let us fix
\[
    \lambda\in\sigma(-\Delta_R),
    \qquad
    u\in E_\lambda(R)\setminus\{0\}.
\]
Now, we extend $u$ oddly across the sides of $R$ and then periodically with
periods $2a$ and $2b$, and denote the resulting function on
$\mathbb{T}_{a,b}^2$ by $\widetilde u$. Since every element of
$E_\lambda(R)$ is a finite linear combination of the Dirichlet modes
\[
    \sin\left(\frac{m\pi x}{a}\right)
    \sin\left(\frac{n\pi y}{b}\right)
\]
having the eigenvalue $\lambda$, the extension $\widetilde u$ is a
Laplacian eigenfunction on $\mathbb{T}_{a,b}^2$ with the same
eigenvalue:
\begin{equation}
    -\Delta_{\mathbb{T}_{a,b}^2}\widetilde u
    =
    \lambda\widetilde u.
    \label{eq:extended-eigen}
\end{equation}
Moreover, the fundamental domain
$(-a,a)\times(-b,b)$ consists of four reflected copies of $R$.
Therefore,
\begin{equation}
    \|\widetilde u\|_{L^2(\mathbb{T}_{a,b}^2)}^2
    =
    4\|u\|_{L^2(R)}^2.
    \label{eq:reflection-norm}
\end{equation}
Since $\widetilde u=u$ on the original copy of $R$, one also has
\begin{equation}
    \|\widetilde u\|_{L^2(V)}
    =
    \|u\|_{L^2(V)}.
    \label{eq:rect-local-norm}
\end{equation}
The torus $\mathbb{T}_{a,b}^2$ is a rectangular flat torus of the
type covered by the observability result of Burq and Zworski.
Indeed, after a uniform spatial dilation to the normalization used
in~\cite[Theorem~1.3 and Remark~1]{BurqZworski2019}, together with
the corresponding rescaling of the time variable in the free
Schr\"odinger equation, their observability estimate transfers to
$\mathbb{T}_{a,b}^2$, with a possibly different observability
constant.

Now, let us fix an observation time $\tau_0>0$. Since
$V\subset\mathbb{T}_{a,b}^2$ is measurable and $|V|>0$, the
Burq--Zworski observability result yields a constant
$C_{V,\tau_0}>0$ such that
\begin{equation}
    \|F\|_{L^2(\mathbb{T}_{a,b}^2)}^2
    \leq
    C_{V,\tau_0}
    \int_0^{\tau_0}
    \int_V
    \left|
        e^{it\Delta_{\mathbb{T}_{a,b}^2}}F(x)
    \right|^2
    \,dx\,dt
    \label{eq:BZ-observability}
\end{equation}
for every $F\in L^2(\mathbb{T}_{a,b}^2)$.

Applying Lemma~\ref{lem:obs-to-eigen} on
$\mathbb{T}_{a,b}^2$ to the eigenfunction $\widetilde u$,
\[
    \|\widetilde u\|_{L^2(V)}
    \geq
    \frac{1}{\sqrt{\tau_0 C_{V,\tau_0}}}
    \|\widetilde u\|_{L^2(\mathbb{T}_{a,b}^2)}.
\]
Using Eqs.~\eqref{eq:reflection-norm} and
\eqref{eq:rect-local-norm}, we obtain
\begin{equation}
    \|u\|_{L^2(V)}
    \geq
    \frac{2}{\sqrt{\tau_0 C_{V,\tau_0}}}
    \|u\|_{L^2(R)}.
    \label{eq:rectangle-constant}
\end{equation}
Thus, the inequality ~\eqref{eq:rectangle-main-bound} holds with
\[
    c_{R,V}
    :=
    \frac{2}{\sqrt{\tau_0 C_{V,\tau_0}}}>0.
\]
For the fixed choice of $\tau_0$, the observability constant
$C_{V,\tau_0}$ depends only on the observation set $V$ and the fixed
rectangular torus $\mathbb{T}_{a,b}^2$. Hence, $c_{R,V}$ is independent
of the eigenvalue $\lambda$, its multiplicity, and the choice of
$u\in E_\lambda(R)$. Taking the infimum over all eigenvalues and all
nonzero eigenfunctions yields
\[
    C_2(V;R)
    \geq
    c_{R,V}>0,
\]
which completes the proof.
\end{proof}

\begin{remark}
\label{rem:rectangle-degeneracy}
The essential point in Theorem~\ref{thm:rectangle-nonlocalization} is
that no simplicity assumption on the spectrum is needed. The proof
does not estimate the coefficients of an eigenfunction in a separated
basis. Instead, the entire eigenspace is transported to a toral
eigenspace and controlled by an observability estimate that is uniform
over all initial data.
\end{remark}

\subsection{Uniform $L^2$ Non-Localization on Equilateral Triangles}

Let $\mathcal{T}\subset\mathbb{R}^2$ be an equilateral triangle, and
let $-\Delta_{\mathcal{T}}$ denote its Dirichlet Laplacian.
The spectral decomposition of $-\Delta_{\mathcal{T}}$ is explicitly
known through finite trigonometric sums; see
Pinsky \cite{Pinsky1985} and the references therein.

However, for the results mentioned
below, there is no explicit formula needed for the eigenfunctions.
\begin{theorem}[Uniform $L^2$ non-localization on an equilateral triangle]
\label{thm:triangle-nonlocalization}
Let $\mathcal{T}$ be an equilateral triangle and let
$V\subset\mathcal{T}$ be measurable with $|V|>0$. Then, there exists
$c_{\mathcal{T},V}>0$ such that
\begin{equation}
\|u\|_{L^2(V)}
\geq
c_{\mathcal{T},V}
\|u\|_{L^2(\mathcal{T})},
\label{eq:triangle-main-bound}
\end{equation}
for every Dirichlet Laplacian eigenfunction $u$ on $\mathcal{T}$.
The constant is uniform with respect to the eigenvalue, its
multiplicity, and the choice of eigenfunction. Consequently,
\[
C_2(V;\mathcal{T})>0.
\]
\end{theorem}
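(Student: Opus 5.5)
The argument follows the rectangle case: we apply Lemma~\ref{lem:obs-to-eigen} directly on $\mathcal T$, with the observability estimate of Alphonse and Lafontaine \cite[Theorem~1.2]{AlphonseLafontaine2025} as the analytic input. No reflection to a torus is needed at our level, because that theorem is already stated for the Dirichlet Laplacian on the equilateral triangle. The reduction to a rational twisted torus is internal to their proof.

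First, fix an observation time $\tau_0>0$ and take the weight $a=\mathbf 1_V$. Since $V\subset\mathcal T$ is measurable with $|V|>0$, we have $a\in L^2(\mathcal T)\setminus\{0\}$ and $a\ge 0$, so the hypotheses of \cite[Theorem~1.2]{AlphonseLafontaine2025} are met. It yields a constant $C_{V,\tau_0}>0$ such that
\[
    \|f\|_{L^2(\mathcal T)}^2
    \leq
    C_{V,\tau_0}\int_0^{\tau_0}\int_{\mathcal T}
    \mathbf 1_V(x)\,\bigl|e^{it\Delta_{\mathcal T}}f(x)\bigr|^2\,dx\,dt
\]
for every $f\in L^2(\mathcal T)$. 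If their estimate is phrased as $\|a\,e^{it\Delta}f\|_{L^2((0,T)\times\mathcal T)}$, then $a^2=\mathbf 1_V$ makes this exactly \eqref{eq:obs-general}. If it is phrased with $a$ rather than $a^2$ inside the integral, the choice $a=\mathbf 1_V$ again gives the same integral, so the form of the statement does not matter. If their triangle is normalized, say with unit side, a dilation of $\mathcal T$ together with the rescaling $t\mapsto s^2t$ of time transfers the estimate to an arbitrary equilateral triangle, at the cost of changing $\tau_0$ and the constant, exactly as in the proof of Theorem~\ref{thm:rectangle-nonlocalization}.

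Second, let $u\in E_\lambda(\mathcal T)\setminus\{0\}$. Then $e^{it\Delta_{\mathcal T}}u=e^{-it\lambda}u$, and Lemma~\ref{lem:obs-to-eigen} gives
\[
    \|u\|_{L^2(V)}\ge (\tau_0C_{V,\tau_0})^{-1/2}\|u\|_{L^2(\mathcal T)}.
\]
We therefore set $c_{\mathcal T,V}:=(\tau_0C_{V,\tau_0})^{-1/2}$. This constant depends only on $\mathcal T$, $V$ and the fixed $\tau_0$. In particular it does not depend on $\lambda$, on the multiplicity of $\lambda$, or on which $u$ in $E_\lambda(\mathcal T)$ we chose. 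Taking the infimum over eigenvalues and eigenvectors gives $C_2(V;\mathcal T)\ge c_{\mathcal T,V}>0$.

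The only step that needs care is the first one, namely verifying that the cited theorem applies as stated. We have to check three things. The Dirichlet realization used in \cite{AlphonseLafontaine2025} must coincide with our form-defined operator with domain $H^1_0$, which holds because both are the Friedrichs extension. Their observation time must be arbitrary, or at least some positive time, and any one time suffices. Their constant must not depend on the data. Beyond these checks the argument is a direct application of Lemma~\ref{lem:obs-to-eigen}.
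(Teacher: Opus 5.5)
Your proposal is correct and follows essentially the same route as the paper. You apply \cite[Theorem~1.2]{AlphonseLafontaine2025} with the indicator weight, transfer the estimate from the normalized triangle to $\mathcal T$ by a similarity $\Phi(y)=x_0+sRy$ with the time rescaling $t=s^2r$, and conclude with Lemma~\ref{lem:obs-to-eigen}. The only difference is that the paper writes out the similarity transfer explicitly, including the rotation and translation and not just the dilation, via the unitary $(Uf)(y)=s\,f(\Phi(y))$, which gives the constant $s^{-2}C_{V_0,\tau_0/s^2}$; you instead defer this step to the rectangle case.
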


\begin{proof}
Let $\mathcal{T}_0$ denote the normalized equilateral triangle used
in Alphonse and Lafontaine~\cite[Theorem~1.2]
{AlphonseLafontaine2025}. Since every equilateral triangle is similar
to $\mathcal{T}_0$, there exist $s>0$, an orthogonal transformation
$R$, and $x_0\in\mathbb{R}^2$ such that
\[
    \mathcal{T}
    =
    \Phi(\mathcal{T}_0),
    \qquad
    \Phi(y)=x_0+sRy.
\]
Define
\[
    U:L^2(\mathcal{T})\longrightarrow L^2(\mathcal{T}_0),
    \qquad
    (Uf)(y)=s\,f(\Phi(y)).
\]
Since the spatial dimension is two, the factor $s$ makes $U$ unitary.
Moreover,
\[
    U(-\Delta_{\mathcal{T}})U^{-1}
    =
    s^{-2}(-\Delta_{\mathcal{T}_0}),
\]
and hence
\begin{equation}
    Ue^{it\Delta_{\mathcal{T}}}U^{-1}
    =
    e^{i(t/s^2)\Delta_{\mathcal{T}_0}}.
    \label{eq:triangle-scaling-flow}
\end{equation}

Let
\[
    V_0:=\Phi^{-1}(V)\subset\mathcal{T}_0.
\]
Since $|V|>0$, we also have $|V_0|>0$. Now, let us fix an observation time
$\tau_0>0$. By the Schr\"odinger observability theorem of Alphonse
and Lafontaine~\cite[Theorem~1.2]{AlphonseLafontaine2025}, applied
to $\mathcal{T}_0$ with observation set $V_0$ and observation time
$\tau_0/s^2$, there exists a constant
$C_{V_0,\tau_0/s^2}>0$ such that
\begin{equation}
    \|g\|_{L^2(\mathcal{T}_0)}^2
    \leq
    C_{V_0,\tau_0/s^2}
    \int_0^{\tau_0/s^2}
    \int_{V_0}
    \left|
        e^{ir\Delta_{\mathcal{T}_0}}g(y)
    \right|^2
    \,dy\,dr
    \label{eq:triangle-observability-normalized}
\end{equation}
for every $g\in L^2(\mathcal{T}_0)$.

Taking $g=Uf$ in
the inequality~\eqref{eq:triangle-observability-normalized}, using the unitarity
of $U$ and Eq.~\eqref{eq:triangle-scaling-flow}, and then making the
change of variables
\[
    t=s^2r,
\]
one has
\begin{equation}
    \|f\|_{L^2(\mathcal{T})}^2
    \leq
    C_{V,\tau_0}
    \int_0^{\tau_0}
    \int_V
    \left|
        e^{it\Delta_{\mathcal{T}}}f(x)
    \right|^2
    \,dx\,dt,
    \label{eq:triangle-observability-V}
\end{equation}
where
\begin{equation}
    C_{V,\tau_0}
    :=
    s^{-2}C_{V_0,\tau_0/s^2}>0.
    \label{eq:triangle-scaled-observability-constant}
\end{equation}
Now, let
\[
    u\in E_\lambda(\mathcal{T})\setminus\{0\}.
\]
Applying Lemma~\ref{lem:obs-to-eigen} to
the inequality~\eqref{eq:triangle-observability-V} gives
\[
    \|u\|_{L^2(V)}
    \geq
    \frac{1}{\sqrt{\tau_0 C_{V,\tau_0}}}
    \|u\|_{L^2(\mathcal{T})}.
\]
Thus, the inequality~\eqref{eq:triangle-main-bound} holds with
\[
    c_{\mathcal{T},V}
    :=
    \frac{1}{\sqrt{\tau_0 C_{V,\tau_0}}}>0.
\]
For the fixed choice of $\tau_0$, this constant depends only on the
fixed triangle $\mathcal{T}$ and the observation set $V$, and is
independent of the eigenvalue $\lambda$, its multiplicity, and the
choice of $u$ within $E_\lambda(\mathcal{T})$. Consequently,
\[
    C_2(V;\mathcal{T})
    \geq
    c_{\mathcal{T},V}>0.
\]

\end{proof}
\begin{remark}
\label{rem:positive-measure}
Theorems~\ref{thm:rectangle-nonlocalization} and
\ref{thm:triangle-nonlocalization} apply to arbitrary measurable sets
of positive Lebesgue measure. This observation class is broader than
the class of nonempty open subsets considered in the rectangle-like
non-localization criterion of Nguyen and
Grebenkov~\cite[Theorem~4.1]{GrebenkovNguyen2013}.
\end{remark}

\section{Uniform $L^2$ Non-Localization on Integrable Polygons}
\label{sec:integrable-polygons}

Throughout this paper, by an integrable polygon we mean a polygonal
domain belonging to the trigonometric spectral classification of
McCartin. More precisely, McCartin's classification
\cite[Theorem~2]{McCartin2008} shows that, up to similarity, the
planar polygonal domains admitting a complete trigonometric system
of Laplacian eigenfunctions are precisely the following four classes:
rectangles, isosceles right triangles, equilateral triangles, and
hemi-equilateral triangles, the latter being
$30^\circ$--$60^\circ$--$90^\circ$ triangles.

The observability estimates needed below are available from existing
Schr\"odinger observability theory. Burq and Zworski provide the
relevant estimate in a rectangular-torus setting, while Alphonse
and Lafontaine establish the corresponding result for the equilateral
triangle. They further observe that a slight modification of their
argument gives analogous observability results for the four integrable
polygonal classes listed above; see
\cite[Section~1.3]{AlphonseLafontaine2025}.

We use these observability results to obtain stationary estimates for
Dirichlet Laplacian eigenspaces. In particular, for every measurable
set $V\subset\Omega$ of positive measure, the resulting estimate gives
a positive lower bound for the $L^2$ mass on $V$ that is uniform over
each eigenspace, including eigenspaces of nontrivial multiplicity.
Equivalently, in terms of the non-localization quantity introduced
above,
\[
    C_2(V;\Omega)>0.
\]
The symmetry reductions used below allow us to pass between the
relevant polygonal geometries while keeping track of the corresponding
$L^2$ norm factors. This stationary estimate also provides the
exact-eigenfunction case for the quasimode and spectral-cluster
estimates developed later.

Let $\Omega$ be one of the domains in this class of integrable polygons, 
and let $-\Delta_\Omega$ denote the Dirichlet Laplacian on $\Omega$.
We will prove the following results.
\begin{lemma}[Odd reflection across a Dirichlet side]
\label{lem:odd-reflection}
Let $D\subset\mathbb{R}^2$ be a bounded polygonal domain and let
$S\subset\partial D$ be a straight side. Let $\rho$ be the Euclidean
reflection across the line containing $S$, and assume that
$D$ and $\rho(D)$ have disjoint interiors and meet along $S$. Set
\[
    D^\ast
    :=
    \operatorname{int}
    \bigl(\overline{D}\cup\overline{\rho(D)}\bigr).
\]
Suppose that $u\in H_0^1(D)$ satisfies
\begin{equation}
    \int_D \nabla u\cdot\nabla\overline{\varphi}\,dx
    =
    \lambda
    \int_D u\,\overline{\varphi}\,dx
    \qquad
    \text{for every }\varphi\in H_0^1(D).
    \label{eq:weak-eigen-D}
\end{equation}
Define the odd extension
\[
    \widetilde u(x)
    :=
    \begin{cases}
        u(x), & x\in D,\\[1mm]
        -u(\rho x), & x\in\rho(D).
    \end{cases}
\]
Then, $\widetilde u\in H_0^1(D^\ast)$ and
\begin{equation}
    \int_{D^\ast}
    \nabla\widetilde u\cdot\nabla\overline{\psi}\,dx
    =
    \lambda
    \int_{D^\ast}
    \widetilde u\,\overline{\psi}\,dx
    \qquad
    \text{for every }\psi\in H_0^1(D^\ast).
    \label{eq:weak-eigen-reflected}
\end{equation}
Consequently,
\[
    \widetilde u\in E_\lambda(D^\ast).
\]
Moreover,
\[
    \|\widetilde u\|_{L^2(D^\ast)}^2
    =
    2\|u\|_{L^2(D)}^2.
\]
\end{lemma}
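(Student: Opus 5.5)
The proof has three parts: membership of $\widetilde u$ in $H_0^1(D^\ast)$, the weak eigenvalue identity on $D^\ast$, and the norm identity.

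\emph{Norm identity.} Since $\rho$ is an isometry with Jacobian of modulus one, the change of variables $y=\rho x$ gives
\[
\int_{\rho(D)}|u(\rho x)|^2\,dx=\|u\|_{L^2(D)}^2 .
\]
The sets $D$ and $\rho(D)$ are disjoint up to the null set $\overline{D}\cap\overline{\rho(D)}\subset\partial D\cup\partial\rho(D)$, so $\|\widetilde u\|_{L^2(D^\ast)}^2=2\|u\|_{L^2(D)}^2$.

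\emph{$H_0^1$ membership.} Take $u_k\in C_c^\infty(D)$ with $u_k\to u$ in $H^1(D)$. The odd extension $\widetilde u_k$ equals $u_k$ on $D$ and $-u_k\circ\rho$ on $\rho(D)$. Its support is a compact subset of $D\cup\rho(D)\subset D^\ast$, and it is smooth on $D^\ast$ because it vanishes near $S$. Hence $\widetilde u_k\in C_c^\infty(D^\ast)$. Because $\rho$ is an isometry, $\|\widetilde u_k-\widetilde u_l\|_{H^1(D^\ast)}^2=2\|u_k-u_l\|_{H^1(D)}^2$. So $(\widetilde u_k)$ is Cauchy in $H_0^1(D^\ast)$, and its limit agrees a.e. with $\widetilde u$. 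Therefore $\widetilde u\in H_0^1(D^\ast)$, and $\nabla\widetilde u$ is given piecewise by $\nabla u$ on $D$ and $-\rho^{T}(\nabla u)\circ\rho$ on $\rho(D)$, with no singular part on $S$.

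\emph{Weak identity.} Fix $\psi\in H_0^1(D^\ast)$ and split it as $\psi=\psi_e+\psi_o$, where
\[
\psi_e=\tfrac12(\psi+\psi\circ\rho),\qquad \psi_o=\tfrac12(\psi-\psi\circ\rho).
\]
Here we use that $D^\ast$ is $\rho$-invariant, which holds because $\rho$ is an involution swapping $\overline D$ and $\overline{\rho(D)}$. Changing variables on the $\rho(D)$ half shows that both sides of \eqref{eq:weak-eigen-reflected}, which are linear in $\overline\psi$, vanish on the even part $\psi_e$, since $\widetilde u$ is odd. For the odd part, both sides equal twice the corresponding integrals over $D$ with test function $\psi_o|_D$. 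It therefore suffices to show $\psi_o|_D\in H_0^1(D)$ and then apply \eqref{eq:weak-eigen-D}.

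\emph{Main obstacle.} The key step is showing $\psi_o|_D\in H_0^1(D)$. An odd function in $H^1$ of the symmetric domain has vanishing trace on the line of symmetry, so $\psi_o$ has zero trace on $S$. It also vanishes on $\partial D\setminus S$, since that set lies in $\partial D^\ast$ away from $S$, where $\psi$ has zero trace. I would make this rigorous by density. Approximate $\psi$ by $\psi_k\in C_c^\infty(D^\ast)$; the odd parts $(\psi_k)_o$ are smooth, compactly supported in $D^\ast$, and vanish on $S$. Then multiply by cutoffs $\chi_\varepsilon(\operatorname{dist}(x,L)/\varepsilon)$, where $L$ is the line containing $S$. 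Each truncated function lies in $C_c^\infty(D)$, and the truncation error tends to $0$ in $H^1$: this follows from the Lipschitz bound $|(\psi_k)_o|\lesssim\operatorname{dist}(\cdot,L)$ near $S$, which controls the $\varepsilon^{-1}\nabla\chi$ term on a strip of width $\varepsilon$. Near the endpoints of $S$ the polygonal geometry needs a little care, but compact support in $D^\ast$ handles it. Combining the two parts yields \eqref{eq:weak-eigen-reflected}, and thus $\widetilde u\in E_\lambda(D^\ast)$ by the variational definition of $-\Delta_{D^\ast}$.
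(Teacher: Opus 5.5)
Your proposal is correct and follows the paper's proof: the same $C_c^\infty$ density argument gives $\widetilde u\in H_0^1(D^\ast)$, and your test function $\psi_o|_D$ is one half of the paper's $\varphi=\psi|_D-(\psi\circ\rho)|_D$, so the even/odd splitting is only bookkeeping around the same change-of-variables computation. Your cutoff argument for $\psi_o|_D\in H_0^1(D)$ makes rigorous the step that the paper justifies in one line by matching traces on $S$.
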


\begin{proof}
Since $u$ has zero trace on $S$, its odd reflection across $S$
has matching traces on the two sides of the interface. 
Now, let us choose $u_n\in C_c^\infty(D)$ with $u_n\to u$ in $H^1(D)$, and let $\widetilde u_n$ be the odd reflection of $u_n$ across $S$.
Since $u_n$ vanishes in a neighborhood of $\partial D$, one has $\widetilde u_n\in C_c^\infty(D^\ast)$.
Reflection is an isometry, so
\[
    \|\widetilde u_n-\widetilde u_m\|_{H^1(D^\ast)}^2
    =
    2\|u_n-u_m\|_{H^1(D)}^2.
\]
Hence $\widetilde u_n$ converges in $H^1(D^\ast)$ to the function $\widetilde u$, and therefore $\widetilde u\in H_0^1(D^\ast)$. 
Let $\psi\in H_0^1(D^\ast)$ and define, on $D$,
\[
    \varphi
    :=
    \psi|_D-(\psi\circ\rho)|_D.
\]
The traces of the two terms agree on $S$, while both vanish on the
corresponding exterior boundary portions; hence
$\varphi\in H_0^1(D)$. Using the change of variables $x=\rho y$ on
the reflected copy and the fact that $\rho$ is an orthogonal
isometry, we obtain
\begin{align*}
    \int_{D^\ast}
    \nabla\widetilde u\cdot\nabla\overline{\psi}\,dx
    &=
    \int_D
    \nabla u\cdot
    \nabla\overline{\bigl(\psi-\psi\circ\rho\bigr)}\,dx\\
    &=
    \lambda
    \int_D
    u\,\overline{\bigl(\psi-\psi\circ\rho\bigr)}\,dx\\
    &=
    \lambda
    \int_{D^\ast}
    \widetilde u\,\overline{\psi}\,dx.
\end{align*}
Thus, $\widetilde u$ satisfies the Dirichlet eigenvalue equation on
$D^\ast$ in the weak sense. By the operator definition of
$-\Delta_{D^\ast}$ given above, it follows that
\[
    \widetilde u\in E_\lambda(D^\ast).
\]
Finally, since $\rho$ is measure preserving,
\[
    \|\widetilde u\|_{L^2(D^\ast)}^2
    =
    \|u\|_{L^2(D)}^2
    +
    \|u\circ\rho\|_{L^2(\rho(D))}^2
    =
    2\|u\|_{L^2(D)}^2.
\]
\end{proof}
We now state the corresponding stationary estimate for Laplacian
eigenspaces. The underlying Schr\"odinger observability estimates are
known; the point here is that they yield a lower bound for
$C_2(V;\Omega)$ that is uniform over each eigenspace, including in the
presence of spectral multiplicity.

\begin{theorem}[Uniform $L^2$ non-localization on integrable polygons]
\label{thm:integrable-polygons}
Let $\Omega\subset\mathbb{R}^2$ be an integrable polygon, i.e., one of
the following:
\begin{enumerate}[label=(\roman*)]
    \item a rectangle;
    \item an isosceles right triangle;
    \item an equilateral triangle;
    \item a hemi-equilateral $(30^\circ$--$60^\circ$--$90^\circ)$ triangle.
\end{enumerate}
Let $V\subset\Omega$ be measurable with positive Lebesgue measure,
$|V|>0$.
Then, there exists a constant
\begin{equation}
    c_{\Omega,V}>0
\end{equation}
such that every Dirichlet Laplacian eigenfunction
$u\in E_\lambda(\Omega)$ satisfies
\begin{equation}
{
    \|u\|_{L^2(V)}
    \geq
    c_{\Omega,V}
    \|u\|_{L^2(\Omega)}.
    }
    \label{eq:integrable-main-bound}
\end{equation}
The constant $c_{\Omega,V}$ is independent of the eigenvalue
$\lambda$, its multiplicity, and the particular choice of
$u\in E_\lambda(\Omega)$.
Consequently,
\begin{equation}
{
    C_2(V;\Omega)>0.
    }
    \label{eq:C2-integrable-positive}
\end{equation}
\end{theorem}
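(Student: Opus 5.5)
The plan is a case-by-case reduction, with cases (i) and (iii) already settled. For a rectangle, Theorem~\ref{thm:rectangle-nonlocalization} gives \eqref{eq:integrable-main-bound}. For an equilateral triangle, Theorem~\ref{thm:triangle-nonlocalization} gives it. The remaining two cases will be reduced to these by one odd reflection, using Lemma~\ref{lem:odd-reflection}. The main point is that the reflection maps a whole eigenspace $E_\lambda(\Omega)$ linearly into the eigenspace $E_\lambda(\Omega^\ast)$ of the doubled domain. Because of this, the uniformity over multiplicity is inherited automatically from the bound on $\Omega^\ast$.

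For case (ii), let $\Omega$ be an isosceles right triangle and $S$ its hypotenuse. Let $\rho$ be the reflection across the line containing $S$. Then $\Omega$ and $\rho(\Omega)$ have disjoint interiors and meet exactly along $S$, and $\Omega^\ast=\operatorname{int}(\overline{\Omega}\cup\overline{\rho(\Omega)})$ is an open square, which is a rectangle after a rigid motion. Take $u\in E_\lambda(\Omega)\setminus\{0\}$. Lemma~\ref{lem:odd-reflection} gives $\widetilde u\in E_\lambda(\Omega^\ast)$ with $\|\widetilde u\|_{L^2(\Omega^\ast)}^2=2\|u\|_{L^2(\Omega)}^2$. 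Regard $V\subset\Omega\subset\Omega^\ast$ as an observation set in the square, with $\|\widetilde u\|_{L^2(V)}=\|u\|_{L^2(V)}$ since $\widetilde u=u$ on $\Omega$. Theorem~\ref{thm:rectangle-nonlocalization} holds for any rectangle up to rigid motion, because the Dirichlet Laplacian commutes with the induced unitary map. Applying it yields $\|u\|_{L^2(V)}\ge c_{\Omega^\ast,V}\sqrt2\,\|u\|_{L^2(\Omega)}$. So we may take $c_{\Omega,V}:=\sqrt2\,c_{\Omega^\ast,V}$.

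For case (iv), let $\Omega$ be a $30^\circ$--$60^\circ$--$90^\circ$ triangle and reflect across its longer leg, the altitude of the equilateral triangle it halves. The union is an equilateral triangle $\mathcal T$, and the hypotheses of Lemma~\ref{lem:odd-reflection} hold for the same reasons as before. The identical argument with Theorem~\ref{thm:triangle-nonlocalization} gives $c_{\Omega,V}:=\sqrt2\,c_{\mathcal T,V}$.

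The final step takes the infimum over $\lambda$ and $u$ to get $C_2(V;\Omega)\ge c_{\Omega,V}>0$. I expect the only delicate point to be the geometric verification that each reflection produces exactly the claimed doubled domain, so that the interior of the closure union is the open square or triangle. One must also confirm that $\widetilde u$ lies in the full Dirichlet eigenspace of $\Omega^\ast$, which is precisely what the weak formulation in Lemma~\ref{lem:odd-reflection} provides. The analytic input is entirely contained in the two earlier theorems.
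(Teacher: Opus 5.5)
Your proposal is correct and is essentially the paper's proof. Cases (i) and (iii) come directly from Theorems~\ref{thm:rectangle-nonlocalization} and \ref{thm:triangle-nonlocalization}, while cases (ii) and (iv) use a single odd reflection via Lemma~\ref{lem:odd-reflection} (across the hypotenuse and the longer leg, respectively), giving $c_{\Omega,V}=\sqrt2\,c_{\Omega^\ast,V}$; your remark that the rectangle theorem must be moved to the rotated square by a rigid motion is a step the paper leaves implicit.
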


\begin{figure}[htbp]
    \centering
    \begin{minipage}[t]{0.46\textwidth}
        \centering
        \begin{tikzpicture}[scale=1.0, line width=0.9pt]
            \coordinate (A) at (0,0);
            \coordinate (B) at (4,0);
            \coordinate (C) at (4,2.4);
            \coordinate (D) at (0,2.4);

            \draw (A)--(B)--(C)--(D)--cycle;

            \draw (0.28,0)--(0.28,0.28)--(0,0.28);

            \node at (2,1.2) {$\Omega$};

            \node[below] at (2,0) {$a$};
            \node[right] at (4,1.2) {$b$};
        \end{tikzpicture}

        \vspace{1mm}
        {\small (a) Rectangle}
    \end{minipage}
    \hfill
    \begin{minipage}[t]{0.46\textwidth}
        \centering
        \begin{tikzpicture}[scale=1.0, line width=0.9pt]
            \coordinate (A) at (0,0);
            \coordinate (B) at (3.4,0);
            \coordinate (C) at (0,3.4);

            \draw (A)--(B)--(C)--cycle;

            \draw (0.28,0)--(0.28,0.28)--(0,0.28);

            \draw (1.65,-0.09)--(1.65,0.09);
            \draw (-0.09,1.65)--(0.09,1.65);

            \node at (0.95,0.95) {$\Omega$};

            \node at (2.72,0.28) {$45^\circ$};
            \node at (0.37,2.72) {$45^\circ$};
        \end{tikzpicture}

        \vspace{1mm}
        {\small (b) Isosceles right triangle}
    \end{minipage}

    \vspace{5mm}

    \begin{minipage}[t]{0.46\textwidth}
        \centering
        \begin{tikzpicture}[scale=1.0, line width=0.9pt]
            \coordinate (A) at (0,0);
            \coordinate (B) at (4,0);
            \coordinate (C) at (2,3.464);

            \draw (A)--(B)--(C)--cycle;

            \draw (1.95,-0.09)--(1.95,0.09);

            \draw
                ($(A)!0.5!(C)+(-0.08,0.05)$)
                --
                ($(A)!0.5!(C)+(0.08,-0.05)$);

            \draw
                ($(B)!0.5!(C)+(-0.08,-0.05)$)
                --
                ($(B)!0.5!(C)+(0.08,0.05)$);

            \node at (2,1.25) {$\Omega$};

            \node at (0.52,0.28) {$60^\circ$};
            \node at (3.48,0.28) {$60^\circ$};
            \node at (2,2.92) {$60^\circ$};
        \end{tikzpicture}

        \vspace{1mm}
        {\small (c) Equilateral triangle}
    \end{minipage}
    \hfill
    \begin{minipage}[t]{0.46\textwidth}
        \centering
        \begin{tikzpicture}[scale=1.0, line width=0.9pt]
            \coordinate (A) at (0,0);
            \coordinate (B) at (2,0);
            \coordinate (C) at (2,3.464);

            \draw (A)--(B)--(C)--cycle;

            \draw (1.72,0)--(1.72,0.28)--(2,0.28);

            \node at (1.35,1.12) {$\Omega$};

            \node at (0.48,0.28) {$60^\circ$};
            \node at (1.72,2.82) {$30^\circ$};
            \node at (1.66,0.42) {$90^\circ$};

            \draw[dashed, line width=0.6pt]
                (2,0)--(2,3.464);
        \end{tikzpicture}

        \vspace{1mm}
        {\small (d) Hemi-equilateral triangle}
    \end{minipage}

    \caption{
    The four classes of integrable polygonal domains considered in
    Theorem~\ref{thm:integrable-polygons}:
    (a) a rectangle,
    (b) an isosceles right triangle,
    (c) an equilateral triangle, and
    (d) a hemi-equilateral
    ($30^\circ$--$60^\circ$--$90^\circ$) triangle.
    }
    \label{fig:integrable-polygons}
\end{figure}

\begin{proof}
The rectangle and equilateral-triangle cases follow directly from
Theorems~\ref{thm:rectangle-nonlocalization} and
\ref{thm:triangle-nonlocalization}, respectively. It therefore remains
to consider the two integrable triangular domains obtained by symmetry
reduction.

\medskip
\noindent
\textbf{Isosceles right triangle.}
Let $T_{\mathrm{iso}}$ be an isosceles right triangle and let $Q$ be
the square obtained by reflecting $T_{\mathrm{iso}}$ across its
hypotenuse. For
\[
    u\in E_\lambda(T_{\mathrm{iso}})\setminus\{0\},
\]
let $\widetilde u$ denote its odd reflection across the hypotenuse.
By Lemma~\ref{lem:odd-reflection},
\[
    \widetilde u\in E_\lambda(Q),
\]
and
\begin{equation}
    \|\widetilde u\|_{L^2(Q)}^2
    =
    2\|u\|_{L^2(T_{\mathrm{iso}})}^2.
    \label{eq:proof-isosceles-norm}
\end{equation}
For a measurable set $V\subset T_{\mathrm{iso}}$ with $|V|>0$,
Theorem~\ref{thm:rectangle-nonlocalization}, applied to $Q$, gives
\[
    \|\widetilde u\|_{L^2(V)}
    \geq
    c_{Q,V}\|\widetilde u\|_{L^2(Q)}.
\]
Since $\widetilde u=u$ on $V$, we obtain
\[
    \|u\|_{L^2(V)}
    \geq
    \sqrt{2}\,c_{Q,V}
    \|u\|_{L^2(T_{\mathrm{iso}})}.
\]

\medskip
\noindent

\textbf{Hemi-equilateral triangle.}
Let $T_{\mathrm{hemi}}$ be a hemi-equilateral triangle and let
$T_{\mathrm{eq}}$ be the equilateral triangle obtained by reflecting
$T_{\mathrm{hemi}}$ across its altitude. For
\[
    u\in E_\lambda(T_{\mathrm{hemi}})\setminus\{0\},
\]
let $\widetilde u$ denote its odd reflection across the altitude.
By Lemma~\ref{lem:odd-reflection},
\[
    \widetilde u\in E_\lambda(T_{\mathrm{eq}}),
\]
and
\begin{equation}
    \|\widetilde u\|_{L^2(T_{\mathrm{eq}})}^2
    =
    2\|u\|_{L^2(T_{\mathrm{hemi}})}^2.
    \label{eq:proof-hemi-norm}
\end{equation}
Applying Theorem~\ref{thm:triangle-nonlocalization} to
$T_{\mathrm{eq}}$ with the same observation set
$V\subset T_{\mathrm{hemi}}\subset T_{\mathrm{eq}}$ gives
\[
    \|\widetilde u\|_{L^2(V)}
    \geq
    c_{T_{\mathrm{eq}},V}
    \|\widetilde u\|_{L^2(T_{\mathrm{eq}})}.
\]
Since $\widetilde u=u$ on $T_{\mathrm{hemi}}$, and in particular on $V$,
Eq.~\eqref{eq:proof-hemi-norm} yields
\begin{align}
    \|u\|_{L^2(V)}
    &=
    \|\widetilde u\|_{L^2(V)}
    \notag\\
    &\geq
    c_{T_{\mathrm{eq}},V}
    \|\widetilde u\|_{L^2(T_{\mathrm{eq}})}
    \notag\\
    &=
    \sqrt{2}\,c_{T_{\mathrm{eq}},V}
    \|u\|_{L^2(T_{\mathrm{hemi}})}.
    \label{eq:proof-hemi-final}
\end{align}
As $c_{T_{\mathrm{eq}},V}$ depends only on the fixed equilateral
triangle $T_{\mathrm{eq}}$ and the observation set $V$, the constant
$\sqrt{2}\,c_{T_{\mathrm{eq}},V}$ is independent of $\lambda$, its
multiplicity, and the choice of
$u\in E_\lambda(T_{\mathrm{hemi}})$.

\end{proof}

\begin{remark}[Uniformity and interpretation of the non-localization bound]
\label{rem:integrable-uniformity}
The strength of Theorem~\ref{thm:integrable-polygons} lies in the
uniformity of the constant $c_{\Omega,V}$. For a fixed measurable set
$V\subset\Omega$ with $|V|>0$, the same constant applies
simultaneously to every eigenvalue and to every eigenfunction in the
corresponding eigenspace. Equivalently,
\begin{equation}
    \inf_{\lambda\in\sigma(-\Delta_\Omega)}
    \;
    \inf_{\substack{u\in E_\lambda(\Omega)\setminus\{0\}}}
    \frac{\|u\|_{L^2(V)}}
         {\|u\|_{L^2(\Omega)}}
    >0.
\end{equation}
Thus, the assertion is substantially stronger than the qualitative
statement that an individual eigenfunction cannot vanish identically
on a set of positive measure: it provides a quantitative lower bound
that is uniform over the entire spectrum.

In particular, spectral multiplicity does not create an exception to
the estimate. If $\lambda$ is a multiple eigenvalue and
$\{\phi_1,\ldots,\phi_{d_\lambda}\}$ is any basis of
$E_\lambda(\Omega)$, then the estimate holds uniformly for every
linear combination
\[
    u=\sum_{k=1}^{d_\lambda}c_k\phi_k.
\]
Hence, no choice of coefficients within a degenerate eigenspace can
produce a normalized eigenfunction whose $L^2$ mass on $V$ becomes
arbitrarily small.

Another notable feature is that $V$ is required only to be measurable
and to have positive Lebesgue measure; no openness, smoothness, or
geometric control condition is imposed on the observation set.
Consequently, for every sequence $\{u_j\}$ of normalized Dirichlet
eigenfunctions,
\[
    \|u_j\|_{L^2(\Omega)}=1,
\]
one has
\begin{equation}
    \inf_{j\geq1}\|u_j\|_{L^2(V)}
    \geq c_{\Omega,V}>0.
\end{equation}
In particular, no such sequence can asymptotically avoid a fixed
positive-measure subset of an integrable polygon.

We emphasize, however, that this conclusion is a uniform
non-localization, or non-avoidance, property rather than an
equidistribution statement. The theorem does not prescribe how the
remaining $L^2$ mass is distributed throughout $\Omega$, nor does it
assert convergence of $|u_j|^2\,dx$ to normalized Lebesgue measure in
the high-frequency limit. Its content is precisely that every fixed
positive-measure region retains a uniformly positive amount of
eigenfunction mass over the entire spectrum.
\end{remark}

\subsection{Geometric Interpretation of the Integrable Cases}
\label{subsec:integrable-geometric-interpretation}

The four integrable polygonal classes appearing in
Theorem~\ref{thm:integrable-polygons} can be organized around two
basic geometric settings. Rectangles and isosceles right triangles
are linked to rectangular flat-torus geometry, whereas equilateral
and hemi-equilateral triangles are linked to the geometry of the
equilateral triangle and its associated triangular tiling.

For a rectangle, odd reflection across the sides followed by periodic
extension produces an eigenfunction on a rectangular flat torus, to
which the observability result of Burq and Zworski applies. An
isosceles right triangle reduces to the same setting after one
additional odd reflection across its hypotenuse, which produces a
Dirichlet eigenfunction on the associated square. Thus these two
classes ultimately inherit their non-localization estimates from
rectangular-torus observability.

The equilateral triangle provides the second parent geometry. Its
reflection structure generates the triangular tiling underlying the
Schr\"odinger observability result of Alphonse and Lafontaine. A
hemi-equilateral triangle reduces to this case by odd reflection
across its altitude, which extends a Dirichlet eigenfunction to an
antisymmetric Dirichlet eigenfunction on the full equilateral
triangle. Hence the remaining two classes inherit their estimates
from equilateral-triangle observability.

Accordingly, the geometric reductions themselves do not constitute
four independent observability arguments. Their role is to place the
Dirichlet spectral problems into geometries for which the required
Schr\"odinger observability estimates are already available. The
analytical passage from observability to a uniform eigenfunction
estimate is then precisely Lemma~\ref{lem:obs-to-eigen}, while the
finite norm changes introduced by odd reflection are accounted for
in the proof of Theorem~\ref{thm:integrable-polygons}.

\subsection{Consequences of Uniform Non-Localization of Eigenfunctions}
\label{subsec:consequences-localization}
\begin{corollary}[Uniform non-avoidance of positive-measure sets]
\label{cor:no-avoidance}
Let $\Omega\subset\mathbb{R}^2$ be an integrable polygon; that is, let
$\Omega$ be a rectangle, an isosceles right triangle, an equilateral
triangle, or a hemi-equilateral triangle. Let
$V\subset\Omega$ be measurable with $|V|>0$.\\
Then, there exists a constant $c_{\Omega,V}>0$ such that, for every
sequence $\{u_j\}_{j\geq1}$ of Dirichlet Laplacian eigenfunctions
satisfying
\begin{equation}
    -\Delta_\Omega u_j=\lambda_j u_j,
    \qquad
    \|u_j\|_{L^2(\Omega)}=1,
    \label{eq:normalized-eigenfunctions}
\end{equation}
one has
\begin{equation}
    \|u_j\|_{L^2(V)}
    \geq
    c_{\Omega,V}
    \qquad
    \text{for every }j\geq1.
    \label{eq:no-avoidance}
\end{equation}
Equivalently,
\begin{equation}
    \inf_{j\geq1}
    \|u_j\|_{L^2(V)}
    \geq
    c_{\Omega,V}>0.
    \label{eq:uniform-no-avoidance}
\end{equation}
In particular,
\begin{equation}
    \|u_j\|_{L^2(V)}
    \not\longrightarrow0
    \qquad
    \text{as }j\to\infty.
    \label{eq:no-mass-vanishing}
\end{equation}
\end{corollary}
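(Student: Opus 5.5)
This corollary is a direct restatement of Theorem~\ref{thm:integrable-polygons} for sequences, so the plan is simply to invoke that theorem and specialize it to normalized eigenfunctions.

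Fix $\Omega$ in one of the four integrable classes and a measurable set $V\subset\Omega$ with $|V|>0$. Let $c_{\Omega,V}>0$ be the constant furnished by Theorem~\ref{thm:integrable-polygons}. Its key feature is that it depends only on $\Omega$ and $V$. It does not depend on the eigenvalue, its multiplicity, or the chosen vector in the eigenspace.

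Given any sequence $\{u_j\}$ satisfying \eqref{eq:normalized-eigenfunctions}, each $u_j$ is a nonzero element of $E_{\lambda_j}(\Omega)$. Applying \eqref{eq:integrable-main-bound} to $u_j$ gives
\[
\|u_j\|_{L^2(V)}\geq c_{\Omega,V}\|u_j\|_{L^2(\Omega)}=c_{\Omega,V}
\]
for every $j$, which is \eqref{eq:no-avoidance}. Taking the infimum over $j$ yields \eqref{eq:uniform-no-avoidance}. Since the sequence is bounded below by a positive constant, it cannot converge to $0$, which is \eqref{eq:no-mass-vanishing}.

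The only point requiring care is that the constant must be chosen before the sequence. This is legitimate precisely because of the uniformity clause in Theorem~\ref{thm:integrable-polygons}: it covers arbitrary eigenvalues, and arbitrary linear combinations within degenerate eigenspaces. So no step here is a genuine obstacle; all the substantive work was done in the earlier reductions to Lemma~\ref{lem:obs-to-eigen}.
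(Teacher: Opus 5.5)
Your proposal is correct and follows the paper's proof essentially verbatim: fix $c_{\Omega,V}$ from Theorem~\ref{thm:integrable-polygons}, apply \eqref{eq:integrable-main-bound} to each normalized $u_j$, and then deduce the infimum bound and the non-convergence to zero. You also correctly note that the constant can be chosen before the sequence because of the theorem's uniformity over eigenvalues and eigenspace vectors, which is exactly what the paper's argument relies on.
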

The uniform lower bounds established above yield a direct
non-localization consequence: no sequence of normalized Laplacian eigenfunctions can asymptotically avoid a fixed measurable set of
positive measure. More precisely, the $L^2$ mass on any such set
remains uniformly bounded away from zero along every normalized
eigenfunction sequence.
\begin{proof}
By Theorem~\ref{thm:integrable-polygons}, there exists a constant
$c_{\Omega,V}>0$, depending only on the domain $\Omega$ and the
observation set $V$, such that every Dirichlet Laplacian eigenfunction
$u$ on $\Omega$ satisfies
\begin{equation}
    \|u\|_{L^2(V)}
    \geq
    c_{\Omega,V}
    \|u\|_{L^2(\Omega)}.
    \label{eq:corollary-uniform-bound}
\end{equation}
Applying this estimate to $u=u_j$ and using the normalization
$\|u_j\|_{L^2(\Omega)}=1$ yields
\begin{equation}
    \|u_j\|_{L^2(V)}
    \geq
    c_{\Omega,V}
\end{equation}
for every $j\geq1$. Since the same constant applies to the entire
sequence, the inequality \eqref{eq:uniform-no-avoidance} follows immediately.
In particular, the sequence
$\{\|u_j\|_{L^2(V)}\}_{j\geq1}$ is bounded away from zero and therefore
cannot converge to zero.
\end{proof}

Corollary~\ref{cor:no-avoidance} provides a direct spectral
interpretation of the observability estimates used above. For every
integrable polygon $\Omega$ and every measurable subset
$V\subset\Omega$ of positive measure, one has $C_2(V;\Omega)>0$.
Thus, every positive-measure subset of $\Omega$ captures a uniformly
positive fraction of the $L^2$ mass of every Dirichlet Laplacian
eigenfunction. The lower bound is uniform over the entire spectrum,
including eigenvalues of arbitrary finite multiplicity and arbitrary
linear combinations within the corresponding eigenspaces.

This conclusion rules out a specific form of eigenfunction
localization: there cannot exist a sequence of $L^2$-normalized
Dirichlet eigenfunctions whose mass asymptotically vanishes on a fixed
positive-measure subset of an integrable polygon. In particular,
spectral degeneracy alone cannot produce such an asymptotically
avoiding sequence, since the lower bound remains valid uniformly over
every vector in each degenerate eigenspace.

We emphasize, however, that this result is a uniform
\emph{non-localization} statement rather than an equidistribution
theorem. The positivity of $C_2(V;\Omega)$ does not imply that
$|u_j|^2\,dx$ converges to normalized Lebesgue measure as
$\lambda_j\to\infty$, nor does it prescribe the limiting spatial
distribution of high-frequency eigenfunctions. It asserts instead
the weaker, but spectrum-wide, property that no fixed measurable
region of positive measure can become asymptotically invisible to a
sequence of normalized eigenfunctions.

\section{Extension to Schr\"odinger Operators with Bounded Potentials}
\label{sec:bounded-potentials}

The preceding results concern the free Dirichlet Laplacian. We now
show that the same non-localization mechanism is stable under the
addition of an arbitrary bounded real-valued potential for the
rectangular branch of the integrable geometries. More precisely, we
consider the Dirichlet Schr\"odinger operator
\[
    H_{\Omega,q}
    :=
    -\Delta_\Omega+q,
    \qquad
    q\in L^\infty(\Omega;\mathbb{R}),
\]
on rectangles and isosceles right triangles.
Since $q$ is real-valued and bounded,
$H_{\Omega,q}$ is self-adjoint on
\[
    D(H_{\Omega,q})=D(-\Delta_\Omega),
\]
as a bounded self-adjoint perturbation of the Dirichlet Laplacian.

The bounded-potential case requires some care under reflection.
If a Dirichlet eigenfunction is extended oddly across a side, the
potential must be extended evenly in order to preserve the
Schr\"odinger equation. Iterating this construction for a rectangle
leads to a problem on a rectangular flat torus, where the stationary
estimate of Bourgain, Burq, and Zworski
\cite[Theorem~1]{BourgainBurqZworski2013} can be applied. We first
make the reflection argument precise.

\subsection{Odd Reflection with Even Potentials}

\begin{lemma}[Odd reflection for Schr\"odinger eigenfunctions]
\label{lem:odd-reflection-potential}
Let $D\subset\mathbb{R}^2$ be a bounded polygonal domain and let
$S\subset\partial D$ be a straight side. Let $\rho$ denote reflection
across the line containing $S$, assume that $D$ and $\rho(D)$ have
disjoint interiors, and set
\[
    D^\ast
    :=
    \operatorname{int}
    \bigl(
        \overline{D}\cup\overline{\rho(D)}
    \bigr).
\]
Let
\[
    q\in L^\infty(D;\mathbb{R}),
\]
and define its even extension to $D^\ast$ by
\[
    q^\ast(x)
    :=
    \begin{cases}
        q(x),
        & x\in D,\\[1mm]
        q(\rho x),
        & x\in\rho(D).
    \end{cases}
\]
Suppose that $u\in H_0^1(D)$ satisfies
\begin{equation}
    \int_D
        \nabla u\cdot\nabla\overline{\varphi}\,dx
    +
    \int_D
        q\,u\,\overline{\varphi}\,dx
    =
    \lambda
    \int_D
        u\,\overline{\varphi}\,dx
    \label{eq:potential-weak-eigenfunction}
\end{equation}
for every $\varphi\in H_0^1(D)$. Define the odd extension
\[
    \widetilde u(x)
    :=
    \begin{cases}
        u(x),
        & x\in D,\\[1mm]
        -u(\rho x),
        & x\in\rho(D).
    \end{cases}
\]
Then
\[
    \widetilde u\in H_0^1(D^\ast)
\]
and
\[
    (-\Delta_{D^\ast}+q^\ast)\widetilde u
    =
    \lambda\widetilde u
\]
in the weak sense on $D^\ast$. Moreover,
\begin{equation}
    \|\widetilde u\|_{L^2(D^\ast)}^2
    =
    2\|u\|_{L^2(D)}^2.
    \label{eq:potential-reflection-norm}
\end{equation}
\end{lemma}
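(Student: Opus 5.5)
The proof will follow the template of Lemma~\ref{lem:odd-reflection}, with the potential term handled by the even extension $q^\ast$. There are three steps: membership of $\widetilde u$ in $H_0^1(D^\ast)$, the weak equation on $D^\ast$, and the norm identity.

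\textbf{Regularity.} The first step is word for word the same as before. I choose $u_n\in C_c^\infty(D)$ with $u_n\to u$ in $H^1(D)$, and let $\widetilde u_n$ be their odd reflections. Each $\widetilde u_n$ lies in $C_c^\infty(D^\ast)$, because $u_n$ vanishes near $S$. Since reflection is an isometry,
\[
    \|\widetilde u_n-\widetilde u_m\|_{H^1(D^\ast)}^2
    =
    2\|u_n-u_m\|_{H^1(D)}^2,
\]
so the sequence is Cauchy and $\widetilde u\in H_0^1(D^\ast)$.

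\textbf{Weak equation.} Fix $\psi\in H_0^1(D^\ast)$ and set
\[
    \varphi:=\psi|_D-(\psi\circ\rho)|_D .
\]
As in the proof of Lemma~\ref{lem:odd-reflection}, the traces of the two terms cancel on $S$ and vanish on the remaining boundary, so $\varphi\in H_0^1(D)$.

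I split every integral over $D^\ast$ into its parts over $D$ and over $\rho(D)$, and on $\rho(D)$ I substitute $x=\rho y$. Because $\rho$ is an orthogonal reflection, $\nabla\widetilde u(\rho y)=-\rho\nabla u(y)$ and $\rho^T\rho=I$. The gradient term therefore becomes
\[
    \int_{D^\ast}\nabla\widetilde u\cdot\nabla\overline{\psi}\,dx
    =
    \int_D\nabla u\cdot\nabla\overline{\varphi}\,dx .
\]
For the potential term I use $q^\ast(\rho y)=q(y)$ and $\widetilde u(\rho y)=-u(y)$, which give
\[
    \int_{D^\ast}q^\ast\widetilde u\,\overline{\psi}\,dx
    =
    \int_D q\,u\,\overline{\varphi}\,dx .
\]
The zeroth-order term on the right transforms in exactly the same way. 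Applying \eqref{eq:potential-weak-eigenfunction} with the test function $\varphi$ then yields the weak identity on $D^\ast$ for every $\psi$.

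\textbf{Norm identity.} Since $\rho$ preserves measure, $\|\widetilde u\|_{L^2(D^\ast)}^2=2\|u\|_{L^2(D)}^2$, which is \eqref{eq:potential-reflection-norm}.

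The only point needing care is the parity bookkeeping in the potential term. There the sign $-1$ from the odd extension of $u$ must combine with the evenness of $q^\ast$ so that the term reduces to the pairing with $\varphi=\psi-\psi\circ\rho$. An odd extension of $q$ would instead produce a mismatched sign, and the reduction would fail. Everything else is routine because $q^\ast$ is bounded, and the weak formulation of $-\Delta_{D^\ast}+q^\ast$ is defined on $H_0^1(D^\ast)$.
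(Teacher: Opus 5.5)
Your proposal is correct and follows the paper's proof step for step. It gets $H_0^1(D^\ast)$ membership from the approximation argument of Lemma~\ref{lem:odd-reflection}, derives the weak equation with the test function $\varphi=\psi|_D-(\psi\circ\rho)|_D$, the substitution $x=\rho y$ and the evenness of $q^\ast$, and proves the norm identity because $\rho$ preserves measure. One small imprecision: in $\nabla\widetilde u(\rho y)=-\rho\nabla u(y)$ and in $\rho^T\rho=I$ you should write the linear part of $\rho$, since $\rho$ is affine when the line containing $S$ does not pass through the origin.
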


\begin{proof}
Since $u\in H_0^1(D)$ has zero trace on the reflecting side $S$,
the two reflected pieces have compatible traces across $S$.
As in Lemma~\ref{lem:odd-reflection}, the Sobolev gluing argument gives
\[
    \widetilde u\in H_0^1(D^\ast).
\]
Let $\psi\in H_0^1(D^\ast)$ and define, on $D$,
\[
    \varphi(x)
    :=
    \psi(x)-\psi(\rho x).
\]
Then, $\varphi\in H_0^1(D)$. Splitting the integrals over $D^\ast$
into the two reflected copies and making the change of variables
$x=\rho y$ on $\rho(D)$, we obtain, using the orthogonality of the
linear part of $\rho$ and the evenness of $q^\ast$,
\begin{align*}
&\int_{D^\ast}
    \nabla\widetilde u\cdot\nabla\overline{\psi}\,dx
+
\int_{D^\ast}
    q^\ast\widetilde u\,\overline{\psi}\,dx
\\
&\qquad =
\int_D
    \nabla u\cdot
    \nabla\overline{\bigl(\psi-\psi\circ\rho\bigr)}\,dx
+
\int_D
    q\,u\,
    \overline{\bigl(\psi-\psi\circ\rho\bigr)}\,dx.
\end{align*}
Applying Eq. \eqref{eq:potential-weak-eigenfunction} with
$\varphi=\psi-\psi\circ\rho$ gives
\begin{align*}
&\int_{D^\ast}
    \nabla\widetilde u\cdot\nabla\overline{\psi}\,dx
+
\int_{D^\ast}
    q^\ast\widetilde u\,\overline{\psi}\,dx
\\
&\qquad =
\lambda
\int_D
    u\,\overline{\bigl(\psi-\psi\circ\rho\bigr)}\,dx
=
\lambda
\int_{D^\ast}
    \widetilde u\,\overline{\psi}\,dx.
\end{align*}
Thus $\widetilde u$ satisfies
\[
    (-\Delta_{D^\ast}+q^\ast)\widetilde u
    =
    \lambda\widetilde u
\]
weakly on $D^\ast$.

Finally, reflection preserves Lebesgue measure and
$|\widetilde u(\rho x)|=|u(x)|$. Hence
\[
    \|\widetilde u\|_{L^2(D^\ast)}^2
    =
    2\|u\|_{L^2(D)}^2,
\]
which proves Eq. \eqref{eq:potential-reflection-norm}.
\end{proof}

\subsection{Uniform Non-Localization with Bounded Potentials}

The uniform non-localization result for the free Dirichlet Laplacian
extends to Dirichlet Schr\"odinger operators with bounded real-valued
potentials on the rectangular branch.
\begin{theorem}[Uniform non-localization with bounded potentials]
\label{thm:bounded-potential}
Let $\Omega\subset\mathbb{R}^2$ be either a rectangle or an
isosceles right triangle, and let
\[
    q\in L^\infty(\Omega;\mathbb{R}).
\]
Let
\[
    H_{\Omega,q}
    :=
    -\Delta_\Omega+q
\]
be the Dirichlet Schr\"odinger operator on $\Omega$. Then, for every
nonempty open set $\omega\subset\Omega$, there exists a constant
$c_{\Omega,\omega,q}>0$ such that
\begin{equation}
    \|u\|_{L^2(\omega)}
    \geq
    c_{\Omega,\omega,q}
    \|u\|_{L^2(\Omega)}
    \label{eq:bounded-potential-main}
\end{equation}
for every nonzero eigenfunction
\[
    H_{\Omega,q}u=\lambda u.
\]
The constant $c_{\Omega,\omega,q}$ is independent of $\lambda$, its
multiplicity, and the choice of eigenfunction within the corresponding
eigenspace.
\end{theorem}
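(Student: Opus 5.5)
The plan is to reduce everything to the rectangular torus and then apply the stationary estimate of Bourgain, Burq, and Zworski \cite[Theorem~1]{BourgainBurqZworski2013}. That estimate says: for $Q\in L^2(\mathbb T^2;\mathbb R)$ and a nonempty open $\widetilde\omega\subset\mathbb T^2$, there is $C$ depending only on $Q$, $\widetilde\omega$ and the torus such that every solution of $(-\Delta+Q)U=\lambda U$ satisfies $\|U\|_{L^2(\mathbb T^2)}\le C\|U\|_{L^2(\widetilde\omega)}$, uniformly in $\lambda$.

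\textbf{Rectangle.} First, for $R=(0,a)\times(0,b)$, apply Lemma~\ref{lem:odd-reflection-potential} twice: once across $\{x=0\}$ and once across $\{y=0\}$. This gives an eigenfunction $\widetilde u$ of $-\Delta+q^\ast$ on the Dirichlet rectangle $(-a,a)\times(-b,b)$, with $q^\ast$ the even extension of $q$, and $\|\widetilde u\|^2_{L^2}=4\|u\|^2_{L^2(R)}$. Next, extend $\widetilde u$ and $q^\ast$ periodically with periods $2a,2b$ to $\mathbb T_{a,b}^2$. The delicate point is that the periodized $\widetilde u$ must solve $(-\Delta+q^\sharp)\widetilde u=\lambda\widetilde u$ weakly on the whole torus, and not merely on the open fundamental rectangle.

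I would handle this with the same test-function splitting as in Lemma~\ref{lem:odd-reflection-potential}. For $\psi\in H^1(\mathbb T_{a,b}^2)$, define on $R$
\[
\varphi(x,y):=\sum_{\epsilon_1,\epsilon_2\in\{\pm1\}}\epsilon_1\epsilon_2\,\psi(\epsilon_1x,\epsilon_2y),
\]
with coordinates taken modulo the periods. One checks that $\varphi$ vanishes on all four sides of $R$: on $x=0$ and $x=a$ it vanishes by the $2a$-periodicity of $\psi$, and likewise in $y$. Hence $\varphi\in H_0^1(R)$. Since $q^\sharp$ is even in each variable, summing the four copies reduces the torus weak formulation to \eqref{eq:potential-weak-eigenfunction} tested against $\varphi$. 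An alternative route is to note that $\widetilde u\in H^1_{\mathrm{loc}}$ across the interior lines by the lemma, and that periodicity together with the vanishing traces on $x=\pm a$ glues across the remaining lines.

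Now take $\widetilde\omega=\omega$, viewed as an open subset of the torus sitting inside the original copy of $R$, and $Q=q^\sharp\in L^\infty\subset L^2$. The BBZ estimate gives
\[
2\|u\|_{L^2(R)}=\|\widetilde u\|_{L^2(\mathbb T)}\le C\|u\|_{L^2(\omega)}.
\]
So one may take $c=2/C$. This constant is independent of $\lambda$ and of which vector in the eigenspace is chosen, because the estimate holds for every solution. Two further remarks:
\begin{itemize}
\item If BBZ is stated on $\mathbb R^2/\mathbb Z^2$ or on a square torus, a linear change of variables converts the rectangular torus into a standard one. This introduces a metric $-\partial_x^2/(2a)^2-\partial_y^2/(2b)^2$. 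If only the isotropic Laplacian is allowed, I would invoke the rectangular-torus version, which is how the paper quotes it.
\item The resulting constant depends on $q$ only through $q^\sharp$, which is determined by $q$.
\end{itemize}

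\textbf{Isosceles right triangle.} For an isosceles right triangle $T$, reflect oddly across the hypotenuse using Lemma~\ref{lem:odd-reflection-potential}, with $q$ extended evenly. This yields a Dirichlet eigenfunction of $-\Delta_Q+q^\ast$ on the square $Q$, with $q^\ast\in L^\infty(Q;\mathbb R)$ and $\|\widetilde u\|^2=2\|u\|^2$. Apply the rectangle case to $(Q,\omega,q^\ast)$ to get
\[
\|u\|_{L^2(\omega)}=\|\widetilde u\|_{L^2(\omega)}\ge c_{Q,\omega,q^\ast}\sqrt2\,\|u\|_{L^2(T)}.
\]

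\textbf{Main obstacle.} I expect the main obstacle to be the global weak-solution check on the torus in the rectangle step, specifically the gluing across the lines $x=\pm a$ and $y=\pm b$ identified by periodicity. The reflection lemma as stated only covers a single interior reflection. The test-function symmetrization above is the tool I would use first.
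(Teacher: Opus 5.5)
Your proposal is correct and follows the paper's proof essentially step for step: odd reflection of $u$ with even reflection of $q$ onto $\mathbb{T}_{a,b}^2$, then the Bourgain--Burq--Zworski stationary estimate giving the constant $2/K$, then the hypotenuse reflection reducing the isosceles right triangle to the square with constant $\sqrt2\,c_{Q,\omega,q_Q}$. Your symmetrized test function $\varphi=\sum_{\epsilon_1,\epsilon_2\in\{\pm1\}}\epsilon_1\epsilon_2\,\psi(\epsilon_1x,\epsilon_2y)$ makes explicit the gluing across the periodically identified lines, which the paper only covers by saying the reflection lemma is applied ``successively.''
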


\begin{proof}
We treat the two geometries separately.

\medskip
\noindent
\textbf{Rectangle.}
Let
\[
    \Omega=R=(0,a)\times(0,b).
\]
Suppose that
\[
    (-\Delta_R+q)u=\lambda u,
    \qquad
    u\in H_0^1(R)\setminus\{0\}.
\]
Now, we extend $u$ oddly across the sides of $R$ and then periodically with
periods $2a$ and $2b$. Let us denote the resulting function on the
rectangular flat torus
\[
    \mathbb{T}_{a,b}^2
    :=
    \mathbb{R}^2/
    \bigl(
        2a\mathbb{Z}\times2b\mathbb{Z}
    \bigr)
\]
by $\widetilde u$. 
Next, we extend $q$ evenly across the sides of $R$ and then
periodically with the same periods, and denote the resulting potential
by $\widetilde q$. Then
\[
    \widetilde q
    \in
    L^\infty(\mathbb{T}_{a,b}^2;\mathbb{R})
    \subset
    L^2(\mathbb{T}_{a,b}^2;\mathbb{R}).
\]
Applying Lemma~\ref{lem:odd-reflection-potential} successively gives
\begin{equation}
    (-\Delta_{\mathbb{T}_{a,b}^2}+\widetilde q)\widetilde u
    =
    \lambda\widetilde u
    \label{eq:potential-torus-eigenfunction}
\end{equation}
in the weak sense on $\mathbb{T}_{a,b}^2$.

A fundamental domain
$(-a,a)\times(-b,b)$ consists of four reflected copies of $R$.
Consequently,
\begin{equation}
    \|\widetilde u\|_{L^2(\mathbb{T}_{a,b}^2)}^2
    =
    4\|u\|_{L^2(R)}^2.
    \label{eq:potential-rectangle-norm}
\end{equation}
Moreover, on the original copy of $R$ one has $\widetilde u=u$.
Hence
\begin{equation}
    \|\widetilde u\|_{L^2(\omega)}
    =
    \|u\|_{L^2(\omega)}.
    \label{eq:potential-rectangle-local}
\end{equation}

The stationary estimate of Bourgain, Burq, and Zworski
\cite[Theorem~1]{BourgainBurqZworski2013}, applied to the
rectangular torus $\mathbb{T}_{a,b}^2$, the real-valued potential
$\widetilde q$, and the nonempty open set $\omega$, gives a constant
$K_{\mathbb{T}_{a,b}^2,\omega,\widetilde q}>0$, independent of the
spectral parameter, such that
\begin{equation}
    \|v\|_{L^2(\mathbb{T}_{a,b}^2)}
    \leq
    K_{\mathbb{T}_{a,b}^2,\omega,\widetilde q}
    \left(
        \|(-\Delta+\widetilde q-\lambda)v\|_
            {L^2(\mathbb{T}_{a,b}^2)}
        +
        \|v\|_{L^2(\omega)}
    \right).
    \label{eq:BBZ-stationary}
\end{equation}
Taking $v=\widetilde u$ and using
Eq. \eqref{eq:potential-torus-eigenfunction}, we obtain
\[
    \|\widetilde u\|_{L^2(\omega)}
    \geq
    K_{\mathbb{T}_{a,b}^2,\omega,\widetilde q}^{-1}
    \|\widetilde u\|_{L^2(\mathbb{T}_{a,b}^2)}.
\]
Combining this with
Eq. \eqref{eq:potential-rectangle-norm} and
Eq. \eqref{eq:potential-rectangle-local} yields
\begin{equation}
    \|u\|_{L^2(\omega)}
    \geq
    \frac{2}
    {K_{\mathbb{T}_{a,b}^2,\omega,\widetilde q}}
    \|u\|_{L^2(R)}.
    \label{eq:potential-rectangle-final}
\end{equation}
Thus, the inequality \eqref{eq:bounded-potential-main} holds for rectangles.

\medskip
\noindent
\textbf{Isosceles right triangle.}
Let $\Omega=T_{\mathrm{iso}}$ be an isosceles right triangle and let
$Q$ be the square obtained by reflecting $T_{\mathrm{iso}}$ across
its hypotenuse. Let $\rho$ denote this reflection.

Now, we define the even extension of $q$ to $Q$ by
\[
    q_Q(x)
    :=
    \begin{cases}
        q(x),
        & x\in T_{\mathrm{iso}},\\[1mm]
        q(\rho x),
        & x\in\rho(T_{\mathrm{iso}}).
    \end{cases}
\]
Then,
\[
    q_Q\in L^\infty(Q;\mathbb{R}).
\]
Let $\widetilde u$ denote the odd reflection of $u$ across the
hypotenuse. By Lemma~\ref{lem:odd-reflection-potential},
\[
    (-\Delta_Q+q_Q)\widetilde u
    =
    \lambda\widetilde u
\]
and
\begin{equation}
    \|\widetilde u\|_{L^2(Q)}^2
    =
    2\|u\|_{L^2(T_{\mathrm{iso}})}^2.
    \label{eq:potential-isosceles-norm}
\end{equation}
Since $\omega\subset T_{\mathrm{iso}}$ and
$\widetilde u=u$ on the original triangle,
\[
    \|\widetilde u\|_{L^2(\omega)}
    =
    \|u\|_{L^2(\omega)}.
\]
Applying the rectangle case to the square $Q$, with potential $q_Q$
and observation set $\omega$, gives a constant
$c_{Q,\omega,q_Q}>0$ such that
\[
    \|\widetilde u\|_{L^2(\omega)}
    \geq
    c_{Q,\omega,q_Q}
    \|\widetilde u\|_{L^2(Q)}.
\]
Using Eq. \eqref{eq:potential-isosceles-norm}, we conclude that
\begin{equation}
    \|u\|_{L^2(\omega)}
    \geq
    \sqrt{2}\,
    c_{Q,\omega,q_Q}
    \|u\|_{L^2(T_{\mathrm{iso}})}.
    \label{eq:potential-isosceles-final}
\end{equation}
This proves the theorem.
\end{proof}

\begin{remark}
\label{rem:bounded-potential-scope}
Theorem~\ref{thm:bounded-potential} is stated for rectangles and
isosceles right triangles because the reflection procedure reduces
these domains to rectangular flat tori, where the required stationary
estimate with a rough potential is available. The argument does not,
by itself, provide the corresponding result for equilateral or
hemi-equilateral triangles. Such an extension would require an
appropriate observability or stationary estimate for Schr\"odinger
operators with nontrivial potentials in the equilateral-triangle
geometry.
\end{remark}

\begin{remark}
\label{rem:bounded-potential-observation-set}
Unlike the free-Laplacian results of
Sections~\ref{sec:uniform-L2-nonlocalization} and
\ref{sec:integrable-polygons}, which allow arbitrary measurable
observation sets of positive Lebesgue measure,
Theorem~\ref{thm:bounded-potential} is stated for nonempty open
observation sets. This distinction reflects the observation class
in the stationary estimate used in its proof.
\end{remark}

\section{Robust Non-Localization for Quasimodes and Spectral Clusters}
\label{sec:quasimodes}

The preceding results concern exact eigenfunctions. We now consider
approximate eigenfunctions and narrow spectral clusters. The connection
between observability and estimates involving the spectral defect
$(A-\lambda)u$ is classical in control theory and is commonly formulated
through resolvent or Hautus-type inequalities; see, for example,
\cite{Miller2012} and the references therein.

Here, we use the time-dependent observability estimate directly to
derive an explicit defect bound adapted to the present
non-localization problem. The resulting estimate keeps track of the
observation time and observability constant and applies without
reference to the particular polygonal geometry. Exact eigenfunctions
correspond to zero spectral defect, while sufficiently accurate
quasimodes and vectors in narrow spectral clusters inherit a positive
lower bound on the observation set.

\subsection{An Explicit Defect Estimate}
\begin{proposition}[Explicit defect estimate from observability]
\label{prop:defect-estimate}
Let $A$ be a self-adjoint operator on $L^2(\Omega)$. Assume that, for
some measurable set $V\subset\Omega$, some observation time
$\tau>0$, and some constant $C_{V,\tau}>0$, the observability estimate
\begin{equation}
    \|f\|_{L^2(\Omega)}^2
    \leq
    C_{V,\tau}
    \int_0^\tau
    \|e^{-itA}f\|_{L^2(V)}^2
    \,dt
    \label{eq:quasimode-observability}
\end{equation}
holds for every $f\in L^2(\Omega)$.

Then, for every $\lambda\in\mathbb{R}$ and every
$u\in\mathcal{D}(A)$, one has
\begin{equation}
{
    \|u\|_{L^2(V)}
    \geq
    \frac{1}{\sqrt{\tau C_{V,\tau}}}
    \|u\|_{L^2(\Omega)}
    -
    \frac{\tau}{\sqrt{3}}
    \|(A-\lambda)u\|_{L^2(\Omega)}.
    }
    \label{eq:quasimode-main}
\end{equation}
\end{proposition}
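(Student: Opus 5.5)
Apply the observability estimate \eqref{eq:quasimode-observability} to $f=u$ and compare $e^{-itA}u$ with the pure phase evolution $e^{-it\lambda}u$, exactly as in Lemma~\ref{lem:obs-to-eigen}, but now keep track of the error. Set
\[
    w(t):=e^{-itA}u-e^{-it\lambda}u,\qquad t\in[0,\tau].
\]
The key pointwise-in-time bound is
\[
    \|w(t)\|_{L^2(\Omega)}\leq t\,\|(A-\lambda)u\|_{L^2(\Omega)} .
\]
To prove it, write $w(t)=e^{-it\lambda}\bigl(e^{-it(A-\lambda)}-I\bigr)u$. Since $u\in\mathcal D(A)$, Duhamel's formula, or simply differentiation of $s\mapsto e^{-is(A-\lambda)}u$, gives
\[
    \bigl(e^{-it(A-\lambda)}-I\bigr)u=-i\int_0^t e^{-is(A-\lambda)}(A-\lambda)u\,ds .
\]
Unitarity of the propagator then yields the bound. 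Alternatively, use the spectral theorem and the elementary inequality $|e^{-itx}-1|\leq t|x|$ applied to $x=\mu-\lambda$.

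Next, work with the norm of $L^2((0,\tau);L^2(V))$. By Minkowski's inequality in this space,
\[
    \Bigl(\int_0^\tau\|e^{-itA}u\|_{L^2(V)}^2dt\Bigr)^{1/2}
    \leq
    \Bigl(\int_0^\tau\|u\|_{L^2(V)}^2dt\Bigr)^{1/2}
    +
    \Bigl(\int_0^\tau\|w(t)\|_{L^2(\Omega)}^2dt\Bigr)^{1/2}.
\]
Here I use $|e^{-it\lambda}|=1$ and $\|w(t)\|_{L^2(V)}\leq\|w(t)\|_{L^2(\Omega)}$. The first term on the right equals $\sqrt{\tau}\,\|u\|_{L^2(V)}$. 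The second is at most
\[
    \|(A-\lambda)u\|\Bigl(\int_0^\tau t^2dt\Bigr)^{1/2}
    =\frac{\tau^{3/2}}{\sqrt3}\|(A-\lambda)u\| .
\]

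Combining this with \eqref{eq:quasimode-observability} for $f=u$ gives
\[
    \|u\|_{L^2(\Omega)}\leq\sqrt{C_{V,\tau}}\Bigl(\sqrt\tau\,\|u\|_{L^2(V)}+\frac{\tau^{3/2}}{\sqrt3}\|(A-\lambda)u\|\Bigr).
\]
Dividing by $\sqrt{\tau C_{V,\tau}}$ and rearranging yields \eqref{eq:quasimode-main} directly. The powers come out right: $\tau^{3/2}/\sqrt{\tau}=\tau$, which produces the coefficient $\tau/\sqrt3$.

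The only step needing care is the rigorous justification of the pointwise bound on $\|w(t)\|$ for $u\in\mathcal D(A)$, i.e.\ the strong differentiability of the unitary group on the domain. I will settle it by the spectral-theorem version: write $\|w(t)\|^2=\int|e^{-it(\mu-\lambda)}-1|^2\,d\langle E_\mu u,u\rangle$, which avoids any differentiability issue. Everything else is bookkeeping with the triangle inequality in the space $L^2((0,\tau);L^2(V))$.
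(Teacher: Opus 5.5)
Your proposal is correct and follows essentially the same route as the paper: the Duhamel bound $\|e^{-itA}u-e^{-it\lambda}u\|_{L^2(\Omega)}\leq t\|(A-\lambda)u\|_{L^2(\Omega)}$, then the triangle inequality in $L^2((0,\tau);L^2(V))$ with $\int_0^\tau t^2\,dt=\tau^3/3$, then the observability estimate with $f=u$, and finally division by $\sqrt{\tau}$. Your spectral-theorem justification of the pointwise bound, via $|e^{-itx}-1|\leq t|x|$, is a harmless and slightly cleaner variant of the paper's Duhamel step.
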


\begin{proof}
Fix $\lambda\in\mathbb{R}$ and $u\in\mathcal{D}(A)$, and set
\[
    r:=(A-\lambda)u.
\]
Duhamel's formula gives
\begin{equation}
    e^{-itA}u
    =
    e^{-it\lambda}u
    -
    i
    \int_0^t
    e^{-i(t-s)A}
    e^{-is\lambda}r
    \,ds.
    \label{eq:quasimode-duhamel}
\end{equation}
Since $A$ is self-adjoint, the propagator $e^{-itA}$ is unitary on
$L^2(\Omega)$. Hence
\begin{align}
    \left\|
        e^{-itA}u-e^{-it\lambda}u
    \right\|_{L^2(\Omega)}
    &\leq
    \int_0^t
    \|r\|_{L^2(\Omega)}
    \,ds
    \notag\\
    &=
    t\|r\|_{L^2(\Omega)}.
    \label{eq:quasimode-pointwise-error}
\end{align}
Restricting to $V$ gives
\[
    \left\|
        e^{-itA}u-e^{-it\lambda}u
    \right\|_{L^2(V)}
    \leq
    t\|r\|_{L^2(\Omega)}.
\]
By the triangle inequality in
$L^2((0,\tau);L^2(V))$,
\begin{align}
&
\left(
    \int_0^\tau
    \|e^{-itA}u\|_{L^2(V)}^2
    \,dt
\right)^{1/2}
\notag\\
&\qquad\leq
\left(
    \int_0^\tau
    \|e^{-it\lambda}u\|_{L^2(V)}^2
    \,dt
\right)^{1/2}
+
\left(
    \int_0^\tau
    t^2
    \|r\|_{L^2(\Omega)}^2
    \,dt
\right)^{1/2}
\notag\\
&\qquad=
\sqrt{\tau}\,
\|u\|_{L^2(V)}
+
\frac{\tau^{3/2}}{\sqrt{3}}\,
\|r\|_{L^2(\Omega)}.
\label{eq:quasimode-time-bound}
\end{align}
On the other hand, applying
the inequality \eqref{eq:quasimode-observability} to $u$ yields
\[
    \frac{1}{\sqrt{C_{V,\tau}}}
    \|u\|_{L^2(\Omega)}
    \leq
    \left(
        \int_0^\tau
        \|e^{-itA}u\|_{L^2(V)}^2
        \,dt
    \right)^{1/2}.
\]
Combining the preceding two estimates gives
\[
    \frac{1}{\sqrt{C_{V,\tau}}}
    \|u\|_{L^2(\Omega)}
    \leq
    \sqrt{\tau}\,
    \|u\|_{L^2(V)}
    +
    \frac{\tau^{3/2}}{\sqrt{3}}
    \|(A-\lambda)u\|_{L^2(\Omega)}.
\]
Dividing by $\sqrt{\tau}$ yields the inequality
\eqref{eq:quasimode-main}.
\end{proof}

Proposition~\ref{prop:defect-estimate} gives a quantitative
stability estimate in terms of the spectral defect
$\|(A-\lambda)u\|_{L^2(\Omega)}$. Exact eigenfunctions correspond to
zero defect and recover the stationary consequence of observability
used earlier in the paper.

\subsection{Uniform Non-Localization of Quasimodes}

\begin{corollary}[Uniform non-localization of accurate quasimodes]
\label{cor:quasimode-nonlocalization}
Under the assumptions of
Proposition~\ref{prop:defect-estimate}, suppose that
$u\in\mathcal{D}(A)\setminus\{0\}$ satisfies
\begin{equation}
    \|(A-\lambda)u\|_{L^2(\Omega)}
    \leq
    \varepsilon
    \|u\|_{L^2(\Omega)}
    \label{eq:quasimode-defect}
\end{equation}
for some $\varepsilon\geq0$. Then,
\begin{equation}
    \|u\|_{L^2(V)}
    \geq
    \left(
        \frac{1}{\sqrt{\tau C_{V,\tau}}}
        -
        \frac{\tau\varepsilon}{\sqrt{3}}
    \right)
    \|u\|_{L^2(\Omega)}.
    \label{eq:quasimode-corollary}
\end{equation}
In particular, if
\begin{equation}
    \varepsilon
    <
    \frac{\sqrt{3}}
         {\tau\sqrt{\tau C_{V,\tau}}},
    \label{eq:quasimode-threshold}
\end{equation}
then the coefficient on the right-hand side of the inequality
\eqref{eq:quasimode-corollary} is strictly positive.
\end{corollary}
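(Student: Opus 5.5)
This is a direct consequence of Proposition~\ref{prop:defect-estimate}, so the plan is to substitute the defect hypothesis into \eqref{eq:quasimode-main} and read off the coefficient. No new analytic input is needed.

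First, fix $\lambda\in\mathbb{R}$ and $u\in\mathcal{D}(A)\setminus\{0\}$ satisfying \eqref{eq:quasimode-defect}. Apply Proposition~\ref{prop:defect-estimate} to this $u$ and $\lambda$; this is valid because it holds for every $\lambda\in\mathbb{R}$ and every $u\in\mathcal{D}(A)$. The result is
\[
    \|u\|_{L^2(V)}
    \geq
    \frac{1}{\sqrt{\tau C_{V,\tau}}}\|u\|_{L^2(\Omega)}
    -\frac{\tau}{\sqrt{3}}\|(A-\lambda)u\|_{L^2(\Omega)}.
\]

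Second, the subtracted term carries the negative coefficient $-\tau/\sqrt{3}$, so replacing $\|(A-\lambda)u\|_{L^2(\Omega)}$ by the larger quantity $\varepsilon\|u\|_{L^2(\Omega)}$ only decreases the right-hand side. Factoring out $\|u\|_{L^2(\Omega)}$ then gives \eqref{eq:quasimode-corollary}. When the coefficient is negative the inequality is trivially true, since the left side is nonnegative, and it follows from the displayed bound in any case.

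Third, for the threshold claim, note that
\[
    \frac{1}{\sqrt{\tau C_{V,\tau}}}-\frac{\tau\varepsilon}{\sqrt 3}>0
    \iff
    \varepsilon<\frac{\sqrt3}{\tau\sqrt{\tau C_{V,\tau}}}.
\]
This uses only $\tau>0$ and $C_{V,\tau}>0$, which makes the multiplication by the positive factor $\sqrt3/\tau$ legitimate. That equivalence is exactly \eqref{eq:quasimode-threshold}.

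There is no genuine obstacle. The only point to check is the direction of the inequality when the defect is replaced by its upper bound, and this is secured by the negative sign of the defect term.
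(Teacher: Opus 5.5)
Your proposal is correct and follows the paper's own argument: substitute the defect bound \eqref{eq:quasimode-defect} into \eqref{eq:quasimode-main}, where the negative sign of the defect term keeps the inequality in the right direction, and then factor out $\|u\|_{L^2(\Omega)}$. The threshold claim is then just the elementary rearrangement of the positivity condition, exactly as in the paper.
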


\begin{proof}
Substituting
the inequality \eqref{eq:quasimode-defect} into
the inequality \eqref{eq:quasimode-main} gives the inequality
\eqref{eq:quasimode-corollary}. The final assertion follows from
the positivity condition
\[
    \frac{1}{\sqrt{\tau C_{V,\tau}}}
    -
    \frac{\tau\varepsilon}{\sqrt{3}}
    >0.
\]
\end{proof}

When $\varepsilon=0$, this reduces to the estimate of the eigenfunction case:
\[
    \|u\|_{L^2(V)}
    \geq
    \frac{1}{\sqrt{\tau C_{V,\tau}}}
    \|u\|_{L^2(\Omega)}.
\]
Hence, the same lower bound remains effective for approximate
eigenfunctions as long as the spectral defect is sufficiently small.

\subsection{Narrow Spectral Clusters}

The robust estimate also yields uniform control over vectors formed
from several distinct eigenvalues contained in a sufficiently narrow
spectral window.
For $\lambda\in\mathbb{R}$ and $\delta\geq0$, we define
\begin{equation}
    \mathcal{E}_{\lambda,\delta}(A)
    :=
    \operatorname{Ran}
    \mathbf{1}_{[\lambda-\delta,\lambda+\delta]}(A),
    \label{eq:spectral-subspace}
\end{equation}
where $\mathbf{1}_{[\lambda-\delta,\lambda+\delta]}(A)$ denotes the
spectral projection of the self-adjoint operator $A$ associated with
the interval $[\lambda-\delta,\lambda+\delta]$.
When $A$ has discrete spectrum, with
\[
    A\phi_j=\lambda_j\phi_j,
\]
this space can equivalently be written as
\[
    \mathcal{E}_{\lambda,\delta}(A)
    =
    \operatorname{span}
    \left\{
        \phi_j:
        |\lambda_j-\lambda|\leq\delta
    \right\}.
\]
Thus, $\mathcal{E}_{\lambda,\delta}(A)$ consists of arbitrary linear
combinations of eigenfunctions whose eigenvalues lie in the spectral
window $[\lambda-\delta,\lambda+\delta]$.

\begin{corollary}[Uniform non-localization on narrow spectral clusters]
\label{cor:spectral-cluster-nonlocalization}
Assume, in addition to the hypotheses of
Proposition~\ref{prop:defect-estimate}, that $A$ has compact
resolvent. 
Then, for every $\lambda\in\mathbb{R}$, every
$\delta\geq0$, and every $u\in\mathcal{E}_{\lambda,\delta}(A)$,
one has
\begin{equation}
{
    \|u\|_{L^2(V)}
    \geq
    \left(
        \frac{1}{\sqrt{\tau C_{V,\tau}}}
        -
        \frac{\tau\delta}{\sqrt{3}}
    \right)
    \|u\|_{L^2(\Omega)}.
    }
    \label{eq:spectral-cluster-main}
\end{equation}
Consequently, if
\begin{equation}
    \delta
    <
    \frac{\sqrt{3}}
         {\tau\sqrt{\tau C_{V,\tau}}},
    \label{eq:spectral-cluster-threshold}
\end{equation}
then
\[
    c_{V,\tau,\delta}
    :=
    \frac{1}{\sqrt{\tau C_{V,\tau}}}
    -
    \frac{\tau\delta}{\sqrt{3}}
    >0,
\]
and
\[
    \|u\|_{L^2(V)}
    \geq
    c_{V,\tau,\delta}
    \|u\|_{L^2(\Omega)}
\]
uniformly in the center $\lambda$ of the spectral window and in the
choice of
$u\in\mathcal{E}_{\lambda,\delta}(A)$.
\end{corollary}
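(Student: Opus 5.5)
The plan is to reduce the corollary to Proposition~\ref{prop:defect-estimate}. The only new input is a spectral-theorem bound on the defect: for $u\in\mathcal{E}_{\lambda,\delta}(A)$ we will show
\[
    \|(A-\lambda)u\|_{L^2(\Omega)}\leq\delta\|u\|_{L^2(\Omega)}.
\]
The conclusion will then follow exactly as in Corollary~\ref{cor:quasimode-nonlocalization}, with $\varepsilon=\delta$.

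\textbf{Domain and defect bound.} Since $A$ has compact resolvent, its spectrum is discrete, with finite-multiplicity eigenvalues $\lambda_j$ and an orthonormal eigenbasis $\{\phi_j\}$. Only finitely many $\lambda_j$ lie in $[\lambda-\delta,\lambda+\delta]$, so $\mathcal{E}_{\lambda,\delta}(A)$ is finite dimensional. Every $u$ in it is a finite combination $u=\sum_{|\lambda_j-\lambda|\leq\delta}c_j\phi_j$, hence $u\in\mathcal{D}(A)$. By orthonormality,
\[
    \|(A-\lambda)u\|^2=\sum|\lambda_j-\lambda|^2|c_j|^2\leq\delta^2\sum|c_j|^2=\delta^2\|u\|^2.
\]
Without compact resolvent one could argue directly with the functional calculus: $\mathbf 1_{[\lambda-\delta,\lambda+\delta]}(A)$ has range in $\mathcal{D}(A)$, and $\|(A-\lambda)\mathbf 1_I(A)\|\leq\sup_{\mu\in I}|\mu-\lambda|=\delta$. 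Either route works; I would present the eigenbasis version, since the hypothesis is given.

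\textbf{Conclusion.} Insert this bound into \eqref{eq:quasimode-main} of Proposition~\ref{prop:defect-estimate}. This gives
\[
    \|u\|_{L^2(V)}\geq\frac{1}{\sqrt{\tau C_{V,\tau}}}\|u\|-\frac{\tau\delta}{\sqrt3}\|u\|,
\]
which is \eqref{eq:spectral-cluster-main}. Condition \eqref{eq:spectral-cluster-threshold} rearranges to $c_{V,\tau,\delta}>0$. The constant involves only $\tau$, $C_{V,\tau}$ and $\delta$, so the bound is uniform in $\lambda$ and in the choice of $u$. For $u=0$ the inequality holds trivially.

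\textbf{Main obstacle.} The only point needing care is the defect bound of the first step, specifically that elements of the spectral subspace lie in $\mathcal{D}(A)$ with defect at most $\delta$. With compact resolvent this is immediate.
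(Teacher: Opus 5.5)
Your proposal is correct and follows the paper's proof. The paper likewise uses the spectral theorem to obtain $\|(A-\lambda)u\|_{L^2(\Omega)}\leq\delta\|u\|_{L^2(\Omega)}$ for $u\in\mathcal{E}_{\lambda,\delta}(A)$ and inserts this into Proposition~\ref{prop:defect-estimate}; your explicit verification that $\mathcal{E}_{\lambda,\delta}(A)\subset\mathcal{D}(A)$, via the finite eigenbasis expansion, is a detail the paper leaves implicit.
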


\begin{proof}
By the spectral theorem, if
$u\in\mathcal{E}_{\lambda,\delta}(A)$, then the spectral support of
$u$ is contained in $[\lambda-\delta,\lambda+\delta]$. Therefore,
\[
    \|(A-\lambda)u\|_{L^2(\Omega)}
    \leq
    \delta\|u\|_{L^2(\Omega)}.
\]
The conclusion follows from
Proposition~\ref{prop:defect-estimate}.

\end{proof}

\begin{remark}
\label{rem:cluster-vs-eigenspace}
When $\delta=0$,
Corollary~\ref{cor:spectral-cluster-nonlocalization} reduces to the
eigenspace-uniform non-localization estimate. For $\delta>0$, the
statement also applies to arbitrary linear combinations of
eigenfunctions associated with distinct eigenvalues, provided that
these eigenvalues lie in the same spectral window. Thus, the
conclusion is stable not only under spectral multiplicity but also
under linear combinations across sufficiently narrow spectral
windows.
\end{remark}

\subsection{Application to Integrable Polygons}

We finally specialize the abstract estimate to the Dirichlet
Laplacian on the integrable polygonal domains considered above.

\begin{corollary}[Spectral-cluster non-localization on integrable polygons]
\label{cor:polygon-spectral-cluster}
Let $\Omega\subset\mathbb{R}^2$ be an integrable polygon and let
$V\subset\Omega$ be measurable with $|V|>0$. Let
\[
    A=-\Delta_\Omega
\]
be the self-adjoint Dirichlet Laplacian and fix an observation time
$\tau>0$ for which the Schr\"odinger observability estimate
\begin{equation}
    \|f\|_{L^2(\Omega)}^2
    \leq
    C_{\Omega,V,\tau}
    \int_0^\tau
    \left\|
        e^{it\Delta_\Omega}f
    \right\|_{L^2(V)}^2
    \,dt
    \label{eq:polygon-cluster-observability}
\end{equation}
holds.

Then, for every $\lambda\in\mathbb{R}$, every $\delta\geq0$, and every
\[
    u\in
    \operatorname{Ran}
    \mathbf{1}_{[\lambda-\delta,\lambda+\delta]}
    (-\Delta_\Omega),
\]
one has
\begin{equation}
    \|u\|_{L^2(V)}
    \geq
    \left(
        \frac{1}{\sqrt{\tau C_{\Omega,V,\tau}}}
        -
        \frac{\tau\delta}{\sqrt{3}}
    \right)
    \|u\|_{L^2(\Omega)}.
    \label{eq:polygon-cluster-bound}
\end{equation}
In particular, if
\begin{equation}
    \delta
    <
    \frac{\sqrt{3}}
         {\tau\sqrt{\tau C_{\Omega,V,\tau}}},
    \label{eq:polygon-cluster-threshold}
\end{equation}
then the lower bound is strictly positive and uniform both in the
center $\lambda$ of the spectral window and in the choice of vector
within the corresponding spectral subspace.
\end{corollary}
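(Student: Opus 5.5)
The corollary is a direct specialization of Corollary~\ref{cor:spectral-cluster-nonlocalization}, so I will verify its hypotheses for $A=-\Delta_\Omega$ and then read off the conclusion.

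\textbf{Step 1: the operator.} By the variational definition recalled in the notation paragraph, $A=-\Delta_\Omega$ is the nonnegative self-adjoint operator associated with the closed form $q_\Omega$ on $H_0^1(\Omega)$. Since $\Omega$ is a bounded polygon, Rellich's theorem gives a compact embedding $H_0^1(\Omega)\hookrightarrow L^2(\Omega)$. Hence $(A+1)^{-1}$ is compact, and $A$ has compact resolvent.

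\textbf{Step 2: matching the observability hypothesis.} The notation convention $e^{it\Delta_\Omega}=e^{-it(-\Delta_\Omega)}=e^{-itA}$ shows that \eqref{eq:polygon-cluster-observability} is exactly \eqref{eq:quasimode-observability} with $C_{V,\tau}=C_{\Omega,V,\tau}$. So Proposition~\ref{prop:defect-estimate} applies to every $u\in\mathcal D(A)$.

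It is worth recording that the assumed estimate is not vacuous. For each of the four classes, such a $\tau$ and constant exist:
\begin{itemize}
\item for rectangles, via the Burq--Zworski estimate on $\mathbb T_{a,b}^2$;
\item for equilateral triangles, via \eqref{eq:triangle-observability-V};
\item for the remaining two triangles, via \cite[Section~1.3]{AlphonseLafontaine2025}.
\end{itemize}
The statement, however, takes the estimate as a hypothesis, so none of this needs to be reproved.

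\textbf{Step 3: applying the cluster corollary.} Let $u\in\operatorname{Ran}\mathbf 1_{[\lambda-\delta,\lambda+\delta]}(A)$. This space lies in $\mathcal D(A)$, since it is spanned by finitely many eigenfunctions. By the spectral theorem, $\|(A-\lambda)u\|\le\delta\|u\|$. Corollary~\ref{cor:spectral-cluster-nonlocalization} (or directly Corollary~\ref{cor:quasimode-nonlocalization} with $\varepsilon=\delta$) then yields \eqref{eq:polygon-cluster-bound}.

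\textbf{Step 4: the threshold.} Under \eqref{eq:polygon-cluster-threshold}, the coefficient $\frac{1}{\sqrt{\tau C_{\Omega,V,\tau}}}-\frac{\tau\delta}{\sqrt3}$ is strictly positive. It depends only on $\tau$, $C_{\Omega,V,\tau}$ and $\delta$, and not on $\lambda$ or on $u$, which gives the claimed uniformity.

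There is no real obstacle here. The only point requiring care is the domain issue for $\lambda\in\mathbb R$ with empty window. In that case the spectral subspace is $\{0\}$ and the inequality is trivial, so no special argument is needed.
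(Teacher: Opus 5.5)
Your proposal is correct and matches the paper's proof: you verify that $-\Delta_\Omega$ has compact resolvent and identify $e^{it\Delta_\Omega}$ with $e^{-itA}$, so the hypothesis is exactly the abstract observability assumption, and then you invoke Corollary~\ref{cor:spectral-cluster-nonlocalization}. Your additional details fill in points the paper leaves implicit, and they are accurate: Rellich's theorem gives the compact resolvent, the spectral subspace is finite-dimensional so that $u\in\mathcal D(A)$, and the empty-window case is trivial.
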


\begin{proof}
The Dirichlet Laplacian on a bounded polygonal domain has compact
resolvent. Moreover,
\[
    e^{-itA}
    =
    e^{it\Delta_\Omega}.
\]
The required Schr\"odinger observability estimate is available for
the integrable polygonal domains considered in the preceding
sections. Hence
Corollary~\ref{cor:spectral-cluster-nonlocalization} applies directly
and yields the inequality \eqref{eq:polygon-cluster-bound}.
\end{proof}

\begin{remark}
\label{rem:cluster-width}
The condition
\eqref{eq:spectral-cluster-threshold} is a sufficient condition for
the property of uniform non-localization on the corresponding spectral cluster and
is not claimed to be optimal. In particular, the factor
$\tau/\sqrt{3}$ in Proposition~\ref{prop:defect-estimate} comes
directly from the Duhamel representation used in its proof and from
\[
    \int_0^\tau t^2\,dt
    =
    \frac{\tau^3}{3}.
\]
A sharper estimate of the dynamical error term could improve the
resulting sufficient condition on the spectral-window width.
\end{remark}

\section{Scope, Limitations, and Further Work}
\label{sec:scope-limitations}

The results of this paper have three different levels of scope.

First, for the free Dirichlet Laplacian, the uniform non-localization
result applies to the four integrable polygonal classes considered
here: rectangles, isosceles right triangles, equilateral triangles,
and hemi-equilateral triangles. Their reflection and tiling structures
reduce the problem to geometries for which the required
Schr\"odinger observability estimates are available
\cite{AlphonseLafontaine2025,BurqZworski2019,SpectralInvariants2025,McCartin2008}. The same argument does not presently extend
to arbitrary rational polygons.

For a general rational polygon, unfolding produces a compact
translation surface that may have conical singularities. The
observability estimates used above are not presently available in
this setting, so the same argument does not directly extend to
arbitrary rational polygons.

To the best of our knowledge, the corresponding uniform
non-localization problem remains open for regular $N$-gons with
$N\geq5$. We formulate this question explicitly.
\begin{problem}[Uniform non-localization on regular polygons]
\label{prob:regular-polygons}
Let $P_N\subset\mathbb{R}^2$ be a regular $N$-gon with $N\geq5$.
Is it true that, for every measurable set $V\subset P_N$ with
$|V|>0$,
\[
    \inf_{\lambda\in\sigma(-\Delta_{P_N})}
    \inf_{\substack{
        u\in E_\lambda(P_N)\setminus\{0\}
    }}
    \frac{\|u\|_{L^2(V)}}
         {\|u\|_{L^2(P_N)}}
    >0?
\]
\end{problem}

More generally, the same question is natural for arbitrary rational
polygons. From the viewpoint developed here, one possible route
toward such an extension is to establish Schr\"odinger observability
on the corresponding unfolded translation surfaces. Indeed, an
estimate of the form
\[
    \|f\|_{L^2(\Omega)}^2
    \leq
    C_{V,\tau}
    \int_0^\tau
    \|e^{it\Delta_\Omega}f\|_{L^2(V)}^2
    \,dt
\]
would immediately imply the corresponding uniform
non-localization bound on each eigenspace via
Lemma~\ref{lem:obs-to-eigen}.

Second, the bounded-potential extension obtained in
Section~\ref{sec:bounded-potentials} is presently restricted to
rectangles and isosceles right triangles. This restriction comes from
the stationary estimates available for Schr\"odinger operators with
rough potentials on rectangular flat tori. The reflection argument
itself extends the Dirichlet eigenfunction oddly and the potential
evenly across the relevant sides. To treat equilateral and
hemi-equilateral triangles in the same way, one would need a
corresponding stationary or observability estimate for
\[
    -\Delta+q
\]
in the equilateral-triangle setting. Such an estimate would extend
the bounded-potential result to the equilateral and
hemi-equilateral cases.

There is also a difference in the admissible observation sets. For
the free Laplacian, the observability results used here apply to
measurable sets of positive Lebesgue measure, whereas the stationary
rough-potential estimate used in
Theorem~\ref{thm:bounded-potential}
is available here for nonempty open observation sets. It would be of
interest to determine whether the bounded-potential result continues
to hold for arbitrary measurable sets of positive measure.

Third, the quasimode and spectral-cluster estimates of
Section~\ref{sec:quasimodes} are quantitative but not claimed to be
optimal. The cluster-width condition
\[
    \delta
    <
    \frac{\sqrt{3}}
         {\tau\sqrt{\tau C_{V,\tau}}}
\]
is a sufficient condition obtained from the elementary Duhamel
estimate in
Proposition~\ref{prop:defect-estimate}. No sharpness is asserted
for either the factor $\tau/\sqrt{3}$ or the resulting admissible
spectral window. Improved control of the dynamical error term, or
additional information on the observability constant
$C_{V,\tau}$, could lead to larger spectral windows and sharper
quasimode estimates.

The defect estimate is not specific to the polygonal setting. Whenever
the Schr\"odinger flow of a self-adjoint operator satisfies a uniform
observability inequality, Proposition~\ref{prop:defect-estimate}
provides a lower mass bound for approximate eigenfunctions. This
raises similar questions for other boundary conditions, mixed
boundary problems, and Schr\"odinger operators on polygonal or more
general domains where suitable observability estimates are available.

Further extensions may proceed in two directions. One is to establish
the required observability estimates for a broader class of operators
and geometries. The other is to improve the quantitative dependence
of the resulting lower bounds on the observation set, the potential,
and the spectral defect.

\section{Conclusion}
\label{sec:conclusion}

We have established a spectrum-wide $L^2$ non-localization principle
for the complete class of integrable polygons in the trigonometric
spectral classification: rectangles, isosceles right triangles,
equilateral triangles, and hemi-equilateral triangles. For every such
domain $\Omega$ and every measurable set $V\subset\Omega$ with
$|V|>0$, we prove
\[
    C_2(V;\Omega)>0.
\]
Equivalently, every positive-measure observation region captures a
uniformly positive fraction of the $L^2$ mass of every Dirichlet
Laplacian eigenfunction. The lower bound is uniform over the spectrum
and over the complete eigenspace associated with each eigenvalue.
Thus spectral degeneracy, including arbitrary cancellations within a
multiple eigenspace, does not permit asymptotic avoidance of a fixed
observation set.

For rectangles, we obtain a substantially more quantitative
description. Let
\[
    R=(0,a)\times(0,b),
    \qquad
    \alpha=\frac{|V|}{|R|},
\]
and assume that the reflected extension of $V$ to the associated
rectangular torus has finite perimeter. We derive an explicit
sufficient spectral threshold $\lambda_*(R,V)$, determined by the
relative area and the reflected perimeter of the observation set,
such that every Dirichlet eigenfunction with
\[
    \lambda\geq\lambda_*(R,V)
\]
satisfies the area-only estimate
\[
    \frac{\|u\|_{L^2(V)}}{\|u\|_{L^2(R)}}
    \geq
    \left[
        \frac{\alpha}{2}
        \left(
            1-
            \frac{\sin(\pi\alpha)}{\pi\alpha}
        \right)
    \right]^{1/2}.
\]
Thus the geometry of the observation set determines the onset of the
high-frequency regime, whereas the resulting mass floor depends only
on the relative observed area. A finite-bandwidth
Tur\'an--Nazarov argument controls the complementary low-frequency
regime and yields a quantitative full-spectrum estimate
\[
    C_2(V;R)
    \geq
    F_R(V)>0,
\]
where the dependence on the low-frequency regime is explicit up to
the universal Tur\'an--Nazarov constant.

The quantitative rectangle argument separates the mechanisms
responsible for these two regimes. At high frequency, an extremal
Fourier estimate for normalized indicator measures converts the area
fraction $\alpha$ into the gap
\[
    \eta(\alpha)
    =
    1-
    \frac{\sin(\pi\alpha)}{\pi\alpha}.
\]
Finite perimeter provides quantitative decay of the remaining Fourier
coefficients, while the arithmetic geometry of lattice points on the
dual-lattice frequency circle gives an explicit two-point clustering
scale. Together these ingredients produce a controlled inter-cluster
error and, ultimately, the explicit onset threshold
$\lambda_*(R,V)$. In particular, the argument converts the
Fourier-analytic quantities arising in stationary toral observability
into geometric parameters of the observation set.

At the qualitative level, a central feature of the analysis is its
eigenspace-level formulation. Rather than estimating a preferred
basis of separated modes, the argument treats complete spectral
subspaces and is therefore insensitive to spectral multiplicity.
Combined with the reflection and symmetry structures of the
integrable polygonal geometries, this provides a unified mechanism
for passing from dynamical observability to stationary
non-localization.

The same viewpoint also gives two further forms of stability. First,
on rectangles and isosceles right triangles, the stationary estimate
persists for Dirichlet Schr\"odinger operators
\[
    -\Delta_\Omega+q,
    \qquad
    q\in L^\infty(\Omega;\mathbb R),
\]
through compatible reflection of both the eigenfunction and the
potential. The resulting estimate remains uniform in the eigenvalue,
its multiplicity, and the choice of vector within the corresponding
eigenspace.

Second, the stationary estimate is stable under a controlled spectral
defect. If $A$ is self-adjoint and its Schr\"odinger flow satisfies an
observability inequality on $V$ over a time interval of length $\tau$,
then
\[
    \|u\|_{L^2(V)}
    \geq
    \frac{1}{\sqrt{\tau C_{V,\tau}}}
    \|u\|_{L^2(\Omega)}
    -
    \frac{\tau}{\sqrt{3}}
    \|(A-\lambda)u\|_{L^2(\Omega)}.
\]
Exact eigenfunctions correspond to zero defect, while sufficiently
accurate quasimodes retain a positive observation bound. Through the
spectral theorem, the same estimate gives uniform lower bounds for
arbitrary vectors in sufficiently narrow spectral windows.

The results therefore reveal several complementary levels of
non-localization. On the complete class of integrable polygons,
non-localization is stable under spectral degeneracy. On the
rectangular branch it admits a quantitative full-spectrum refinement,
with an area-only high-frequency mass floor and a geometry-dependent
onset threshold. The same stationary framework further persists under
bounded potentials on the rectangular branch and under sufficiently
small spectral defects.

We emphasize that these conclusions concern uniform
non-localization, not equidistribution. They exclude asymptotic
avoidance of a fixed positive-measure observation set but do not
determine the limiting spatial distribution of high-frequency
eigenfunctions. Several quantitative questions remain open. In
particular, the explicit threshold obtained here is sufficient rather
than optimal, and the sharp dependence of the high-frequency
non-localization constant on the small-area parameter remains to be
determined. Beyond the integrable setting, spectrum-wide
non-localization for general rational or nonintegrable polygons
remains open. Extending the present theory to such geometries would
require new observability or stationary estimates on the associated
unfolded spaces, while extending the bounded-potential result beyond
the rectangular branch requires comparable rough-potential estimates
in the corresponding polygonal geometries.

\bibliographystyle{elsarticle-num}
\bibliography{refs}

\end{document}